\DeclareMathOperator{\Div}{Div}
\DeclareMathOperator{\Supp}{Supp}
\DeclareMathOperator{\vol}{vol}
\DeclareMathOperator{\Ivol}{Ivol}
 \numberwithin{equation}{subsection}
 \numberwithin{footnote}{subsection}
 \newtheorem{lem}[subsection]{Lemma}
 \newtheorem{prop}[subsection]{Proposition}
 \newtheorem{thm}[subsection]{Theorem}
\theoremstyle{upright}
 \newtheorem{defn}[subsection]{Definition}
 \newcommand{\N}{\mathbb N}
 \newcommand{\Q}{\mathbb Q}
 \newcommand{\R}{\mathbb R}
 \newcommand{\Z}{\mathbb Z}
 \newcommand{\bir}{\dashrightarrow}
 \newcommand{\rddown}[1]{\left\lfloor{#1}\right\rfloor} % round-down
\title{\large B\MakeLowercase{oundedness and volume of generalised pairs}}
\thanks{2010 MSC:
14C20, % Linear systems, divisors, etc
14E05. %Rational and birational maps
14J17, % singularities
14J10,  % moduli, classification
14J32,  % Calabi-Yau manifolds
%14J45, % Fano varieties
14E30. %Minimal model program (Mori theory, extremal rays)
}
\author{\large C\MakeLowercase{aucher} B\MakeLowercase{irkar}}
\date{\today}
\begin{document}
\maketitle

\begin{abstract}
In this paper we investigate boundedness and  volumes of generalised pairs, and 
give applications to usual pairs especially to a class of pairs that we call 
stable log minimal models. 
 
Fixing the dimension and a DCC set controlling coefficients,   
we will show that the set of volumes of all projective generalised lc 
pairs $(X,B+M)$ under the given data, satisfies the DCC. 
Futhermore, we will show that in the klt case, the set of such pairs with ample $K_X+B+M$ 
and fixed volume, forms a bounded family.
 
We prove a result about descent of nef divisors to bounded families. This is the key to proving  
the above and various other results. 

We will then apply the above to study projective lc pairs $(X,B)$ with abundant $K_X+B$ 
of arbitrary Kodaira dimension. In particular, we show that the set of Iitaka volumes of such pairs 
satisfies DCC under some natural boundedness assumptions on the fibres of the Iitaka fibration. 

We define stable log minimal models which consist of a projective lc pair $(X,B)$ 
with semi-ample $K_X+B$ together with a divisor $A\ge 0$ so that $K_X+B+A$ is ample 
and $A$ does not contain any non-klt centre of $(X,B)$. This is a generalisation of 
both usual stable pairs of general type and stable log Calabi-Yau pairs. Fixing appropriate invariants
we show that stable log minimal models form a bounded family. Then we discuss connection with moduli spaces.   

\end{abstract}

\tableofcontents

%%%%%%%%%%%%%%%%%%%%%%%%
%%%%%%%%%%%%%%%%%%%%%%%%%%

\section{\bf Introduction}

We work over an algebraically closed field of characteristic zero.\\

The classification theory of higher dimensional algebraic varieties proceeds by finding special models in 
each birational equivalence class and then classifying such special models. In practice such special 
models are good minimal models and Mori fibre spaces, that is, normal projective varieties $X$ with 
mild singularities such that either $K_X$ is semi-ample or $X$ admits a Mori-Fano fibration 
$X\to T$. Classification of such $X$ in practice means construction of their possible moduli spaces.
Usually the first step of this moduli theory is to fix certain invariants of $X$ and then show that the $X$ 
with these invariants form a bounded family. A similar philosophy is applicable to pairs.

A particular class of such $X$ consists of those with ample $K_X$. In dimension one these correspond   
to curves of genus $\ge 2$ which form a bounded family for each fixed genus. It turns out that in higher dimension 
the right invariant to fix is the volume $(K_X)^{\dim X}$. More generally, pairs 
$(X,B)$ with mild singularities of fixed dimension such that $K_X+B$ is ample with fixed volume 
$(K_X+B)^{\dim X}$ and such that the coefficients of $B$ are in some fixed finite set, form a bounded family 
[\ref{HMX3}]. 

When $K_X$ is semi-ample but not ample, we get a contraction $f\colon X\to Z$ onto a normal variety.
The canonical bundle formula 
$$
K_X\sim_\Q f^*(K_Z+B_Z+M_Z)
$$
expresses $K_X$ in terms of $K_Z$, the discriminant divisor $B_Z$ and the moduli divisor $M_Z$. 
To understand $X$ one then needs to understand not only $Z$ but the structure $(Z,B_Z+M_Z)$ 
which is a \emph{generalised pair} with ample $K_Z+B_Z+M_Z$. 
A similar construction applies when $(X,B)$ is a pair and $K_X+B$ is semi-ample.

Generalised pairs, introduced in [\ref{BZh}], have found many applications in higher dimensional algebraic 
geometry in recent years; see [\ref{B-gen-pairs}] for a survey. A projective generalised pair $(X,B+M)$ 
consists of a normal projective variety $X$, an $\R$-divisor $B$ with non-negative coefficients, 
and an $\R$-divisor $M$ which is the pushdown of a nef $\R$-divisor $M'$ on some birational model $X'\to X$. 
We also assume $K_X+B+M$ is $\R$-Cartier. When $M'=0$, we get a pair in the traditional sense.

The purpose of this paper is then twofold. On the one hand, we want to investigate the geometry of 
generalised pairs. On the other hand, we want to apply the results on generalised pairs to 
understand the geometry of usual pairs. 
 
In the case of generalised pairs, we aim to study volumes and 
boundedness of such pairs.  
Let's fix numbers $d,p\in \N$ and $v\in \Q^{>0}$. Consider the set of  
projective generalised pairs $(X,B+M)$ with data $X'\to X,M'$ such that 
\begin{itemize}
\item $(X,B+M)$ is generalised lc of dimension $d$,

\item $pB$ is integral,

\item $pM'$ is Cartier, and  

\item $K_X+B+M$ is big. 
\end{itemize}
The first question is whether the set of the volumes  
$\vol(K_X+B+M)$ of such pairs satisfies the DCC. 

The second question concerns boundedness of such generalised pairs. Obviously, 
boundedness does not hold in such a generality. We need to impose further restrictions. 
Consider the set of generalised pairs as above with the additional property that 
\begin{itemize}
\item 
$K_X+B+M$ is ample with $\vol(K_X+B+M)=v$. 
\end{itemize}
The question then is whether such pairs form a 
bounded family, more precisely, whether 
there is $m\in \N$ depending only on $d,p,v$ such that $m(K_X+B+M)$ is very ample. 

These questions are important as they naturally appear 
in the context of varieties with semi-ample canonical divisor discussed above. 
They are also important for understanding the geometry of generalised pairs regardless of applications to usual 
varieties and pairs.

For usual pairs, that is when $M'=0$, the answer to the above two questions are affirmative as confirmed by 
Hacon-M$^{\rm c}$Kernan-Xu [\ref{HMX2}][\ref{HMX3}]. 
Unfortunately, as is usually the case, the case of generalised pairs cannot 
simply be reduced to the case of 
usual pairs. The nef divisor $M'$ creates serious complications that do not arise in the context of usual pairs. 

In dimension 2, Filipazzi [\ref{Filipazzi}] answered the above questions affirmatively. 
He took advantage of special features of the geometry of surfaces. 

The main strategy of the proofs in [\ref{HMX2}][\ref{HMX3}] for usual pairs is to show first that the 
$(X,B)$ with bounded volume $\vol(K_X+B)$ are birational 
to models $(\overline{X}, \overline{B})$ which are log smooth and which belong to a bounded family. 
This is achieved by finding a fixed number $m$ such that $|m(K_X+B)|$ defines a birational map: 
 this was already established in [\ref{HMX2}]. They then use boundedness of 
$(\overline{X}, \overline{B})$ to reduce the questions to log fibres of some fixed 
fibration.  

One can try to apply the same strategy in the context of generalised pairs by first finding a 
bounded birational model $(\overline{X}, \overline{B})$. Such models exist   
in any dimension, by the results of [\ref{BZh}]. However, unlike in the case of usual pairs, 
we also need to control the nef divisor $M'$. In other words, we need to choose $\overline{X}$ so that 
$M'$ descends to it, that is, assuming $X'\bir \overline{X}$ is a morphism, we want $M'$ to be the pullback 
of a divisor $\overline{M}$ on $\overline{X}$. Moreover, we need $\overline{M}$ to be bounded 
in some sense. In dimension two, to achieve these properties, [\ref{Filipazzi}] uses 
special features of surfaces such as the well-known fact 
that the intersection matrix of exceptional divisors of a birational morphism of surfaces is negative definite. 
Unfortunately the surface arguments do not work in higher dimension.

In this paper we device completely new techniques to ensure that in 
higher dimension the nef divisor $M'$ can be controlled, that is, it descends to a bounded model, in a quite general context (see Theorems \ref{t-descent-nef-divs-to-bnd-models}, \ref{t-descent-nef-divs-to-bnd-models-1}). 
This is one of the key results of this paper which opens the door to studying many problems 
about generalised pairs.

We then give affirmative answers to the first question on DCC of volumes (Theorem \ref{t-dcc-vol-gen-pairs}), and to  
the second question on boundedness in the generalised klt case (Theorem \ref{t-bnd-gen-pairs-vol=v}).

We will then apply the above to study projective lc pairs $(X,B)$ with abundant $K_X+B$ 
of arbitrary Kodaira dimension. To be precise, we assume $K_X+B\sim_\Q 0/Z$ for some 
contraction $f\colon X\to Z$ where the Kodaira dimension $\kappa(K_X+B)=\dim Z$. 
We show that the set of Iitaka volumes of such pairs 
satisfies DCC under some natural boundedness assumptions on the fibres of $f$ (Theorem \ref{t-dcc-iitaka-volumes}). 
Conjecturally, the DCC holds without such boundedness assumptions but this is currently out of 
reach. 

We then define stable log minimal models which consist of a projective lc pair $(X,B)$ 
with semi-ample $K_X+B$ together with a divisor $A\ge 0$ so that $K_X+B+A$ is ample 
and $A$ does not contain any non-klt centre of $(X,B)$. This is a generalisation of 
both usual KSBA-stable pairs of general type and stable log Calabi-Yau pairs. 
Indeed, when $A=0$ we get exactly the KSBA-stable pairs of general type, and  
when $K_X+B\sim_\Q 0$ we get exactly the stable log Calabi-Yau pairs. 

Fixing appropriate invariants we show that stable log minimal models form a bounded family 
(Theorems \ref{t-bnd-stable-mmodels-lc} and \ref{t-bnd-stable-mmodels-klt}). 
More precisely, we fix the dimension, the Iitaka volume of $K_X+B$ which defines a contraction 
$f\colon X\to Z$, the volume of $A|_F$ for general fibres $F$ of $f$, and the function $\vol(K_X+B+tA)$ 
where $t$ varies in $[0,1]$. 
We will explain that these coditions are natural from the point of view of moduli of 
stable log minimal models.

\vspace{0.5cm}
{\textbf{\sffamily{Families of generalised pairs.}}} 
Before we go any further we fix some notation to simplify statement of results.

\begin{defn}\label{d-d-Fi-g-pairs}
\emph{
Let $d\in \N$, $\Phi\subset \R^{\ge 0}$, and $v\in \R^{>0}$.\\\\ 
$(1)$ 
Let $\mathcal{G}_{glc}(d,\Phi)$ be the set of projective generalised pairs $(X,B+M)$ 
with data $X'\overset{\phi}\to X$ and $M'$ where 
\begin{itemize}
\item $(X,B+M)$ is generalised lc of dimension $d$, 
\item the coefficients of $B$ are in $\Phi$, 
\item $M'=\sum \mu_iM_i'$ where $M_i'$ are nef Cartier and $0<\mu_i\in \Phi$, and 
\item $K_X+B+M$ is big.
\end{itemize}
Note that $M_i'=0$ is allowed in which case the corresponding $\mu_i$ does not play any role.\\\\ 
$(2)$
Let 
$$
\mathcal{G}_{glc}(d,\Phi,v)\subset \mathcal{G}_{glc}(d,\Phi)
$$
consist of those 
$
 (X,B+M)
$
that have 
$
\vol(K_X+B+M)=v.
$ 
Similarly, let 
$$
\mathcal{G}_{glc}(d,\Phi,<\!\!v)\subset \mathcal{G}_{glc}(d,\Phi)
$$ 
consist of those  
$
 (X,B+M) 
$
such that the volume 
$
\vol(K_X+B+M)<v.
$\\\\
$(3)$ 
Let 
$$
\mathcal{F}_{glc}(d,\Phi)\subset\mathcal{G}_{glc}(d,\Phi)
$$
and
$$
\mathcal{F}_{glc}(d,\Phi,v)\subset \mathcal{G}_{glc}(d,\Phi,v)
$$
and 
$$
\mathcal{F}_{glc}(d,\Phi,<\!\!v)\subset \mathcal{G}_{glc}(d,\Phi,<\!\!v)
$$
in each case consist of those $(X,B+M)$ with ample $K_X+B+M$.\\\\ 
$(4)$
Define 
$$
\mathcal{G}_{gklt}(d,\Phi), \ \ \mathcal{G}_{gklt}(d,\Phi,v),\ \ \mathcal{G}_{gklt}(d,\Phi,<\!\!v),
$$
and
$$
 \ \
\mathcal{F}_{gklt}(d,\Phi),\ \ \mathcal{F}_{gklt}(d,\Phi,v),\ \ \mathcal{F}_{gklt}(d,\Phi,<\!\!v)
$$ 
similarly by requiring the pairs $(X,B+M)$ to be generalised klt.\\\\
$(5)$ 
When $0$ is not an accummulation point of $\Phi$, e.g. when $\Phi$ is DCC, 
we say that a subset $\mathcal{E}\subset \mathcal{G}_{glc}(d,\Phi)$ forms a bounded family if 
there is $r\in \N$ such that for each $(X,B+M)\in \mathcal{E}$ there is a very ample 
divisor $A$ on $X$ with $A^d\le r$ and 
$$
(K_X+B+M)\cdot A^{d-1}\le r.
$$ 
}
\end{defn}

\vspace{0.5cm}
{\textbf{\sffamily{Descent of nef divisors to bounded models.}}}
Our first main result is about descent of nef divisors. It essentially says that the nef parts of 
generalised pairs with volume bounded from above descend to bounded birational models. 

\begin{thm}\label{t-descent-nef-divs-to-bnd-models}
Let $d\in \N$, $\Phi\subset \R^{\ge 0}$ be a DCC set, and $v\in \R^{>0}$.
Then there is a bounded set of couples $\mathcal{P}$ depending only on $d,\Phi,v$ satisfying the following. Assume that 
$$
(X,B+M) \in \mathcal{G}_{glc}(d,\Phi,<\!\!v)
$$
with data $X'\overset{\phi}\to X$ and $M'=\sum \mu_iM_i'$ as in Definition \ref{d-d-Fi-g-pairs}.
Then there exist a log smooth couple $(\overline{X},\overline{\Sigma})\in \mathcal{P}$ and a birational map 
$\overline{X}\bir X$ such that 
\begin{itemize}
\item $\overline{\Sigma}$ contains the exceptional divisors of $\overline{X}\bir X$ and the support of the birational 
transform of $B$, and 

\item every $M_i'$ descends to $\overline{X}$.
\end{itemize}
\end{thm}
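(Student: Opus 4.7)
The plan is to build on the bounded birational models of generalised pairs established in [\ref{BZh}] and to reduce the descent of the sum $M'=\sum\mu_iM_i'$ to the single-nef-divisor version Theorem \ref{t-descent-nef-divs-to-bnd-models-1}, which is where the essential content lies.

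First, I would apply the boundedness results of [\ref{BZh}] to $(X,B+M)\in\mathcal{G}_{glc}(d,\Phi,<v)$ to produce a log smooth couple $(\overline{X}_0,\overline{\Sigma}_0)$ in a bounded family depending only on $d,\Phi,v$, together with a birational map $\overline{X}_0\bir X$, such that $\overline{\Sigma}_0$ contains the exceptional divisors of this map and the support of the birational transform of $B$. This guarantees the first bullet of the conclusion but a priori says nothing about descent of the $M_i'$.

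Next, for each nonzero component $M_i'$, I would view $\mu_iM_i'$ as the single nef part of a new generalised pair structure on $X$ (after a mild modification of $B$ to retain bigness of the log canonical divisor and the lc coefficient condition), and apply Theorem \ref{t-descent-nef-divs-to-bnd-models-1} to obtain a bounded log smooth couple $(\overline{X}_i,\overline{\Sigma}_i)$ with $\overline{X}_i\bir X$ to which $M_i'$ descends. I would then take $(\overline{X},\overline{\Sigma})$ to be a common log resolution of $\overline{X}_0$ and all the $\overline{X}_i$, with $\overline{\Sigma}$ containing all exceptional divisors over $X$, the birational transform of $B$, and the transforms of every $\overline{\Sigma}_i$. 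Descent of each $M_i'$ to $\overline{X}$ is then automatic from its descent to $\overline{X}_i$.

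The hard part is ensuring that this common refinement still lies in a bounded family depending only on $d,\Phi,v$. The difficulty is that the number of nonzero components $M_i'$ is a priori unbounded across the family, so a naive common resolution of unboundedly many bounded couples need not remain bounded. The resolution should come from a uniform form of Theorem \ref{t-descent-nef-divs-to-bnd-models-1} — applied, for instance, to the collection of nef parts jointly rather than one at a time — together with the DCC hypothesis on $\Phi$ (which gives a positive lower bound for nonzero $\mu_i$) and the volume upper bound $v$, which together restrict how many $M_i'$ can contribute meaningfully to a bounded model. The real mathematical work therefore lies in the proof of Theorem \ref{t-descent-nef-divs-to-bnd-models-1}, where the completely new techniques promised in the introduction replace the surface-specific arguments of [\ref{Filipazzi}] in higher dimension; the present theorem essentially packages that ingredient together with the birational boundedness of [\ref{BZh}].
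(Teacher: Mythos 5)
Your proposal has a genuine gap: it defers the entire mathematical content to Theorem \ref{t-descent-nef-divs-to-bnd-models-1}, but in the paper that theorem is itself \emph{deduced from} Theorem \ref{t-descent-nef-divs-to-bnd-models} (its proof begins by packaging $(X',B'+N')$ with $N'=M'+3dA'$ as a generalised pair and invoking \ref{t-descent-nef-divs-to-bnd-models}). So relative to the paper's logical structure your argument is circular, and even read on its own terms it contains no proof of the descent statement --- you never explain how to actually force a nef b-divisor to descend to a bounded model. The real work in the paper is the machinery of Sections 3.2--3.6: generalised lc modifications whose exceptional coefficients $\sum e_i$ are bounded (Proposition \ref{p-gen-lc-model}), bounded crepant models at the generalised lc threshold (Proposition \ref{p-bnd-model-by-g-lct}), finiteness of generalised lc thresholds on bounded families (Lemma \ref{l-finiteness-glct-bnd-family}), and the tower of crepant models whose length is bounded by a compactness argument using ACC for generalised lc thresholds (Proposition \ref{p-bnd-length-tower-crep-models}). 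None of this appears in your proposal.

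Your strategy of descending each $M_i'$ separately and then taking a common refinement also fails for the reason you yourself flag: the number of nonzero $M_i'$ is unbounded across the family, so a common log resolution of unboundedly many bounded couples need not be bounded, and your suggested fix (``a uniform form \dots applied to the collection jointly'') is exactly the missing argument, not a sketch of one. The paper sidesteps this entirely: after Step 1 one may assume $p\mu_i\in\Z$ for a fixed $p$, one descends the \emph{single} nef divisor $M'=\sum\mu_iM_i'$ to a bounded model $\overline{X}$ via the tower, and then the descent of each summand is free --- if $M'=\rho^*\rho_*M'$ and each $M_i'$ is nef, the negativity lemma gives $\rho^*\rho_*M_i'=M_i'+E_i'$ with $E_i'\ge 0$ exceptional, and $\sum\mu_iE_i'=0$ forces every $E_i'=0$. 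That one-line observation at the end of Step 4 is what makes the multi-component statement reduce to the single-divisor one, in the opposite direction to the reduction you propose.
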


%For a sketch of the proofs of the above theorems see below.

Recall that a couple consists of a projective variety and a reduced divisor. 

We will prove another result (Theorem \ref{t-descent-nef-divs-to-bnd-models-1}) 
on descent of nef divisors that may be more useful in certain situations. 

\vspace{0.5cm}
{\textbf{\sffamily{DCC of volume of generalised pairs.}}}
A consequence of Theorem \ref{t-descent-nef-divs-to-bnd-models} is the following DCC property of 
volumes of generalised pairs.

\begin{thm}\label{t-dcc-vol-gen-pairs}
Let $d\in \N$ and let $\Phi\subset \R^{\ge 0}$ be a DCC set. 
Then 
$$
\{\vol(K_X+B+M) \mid (X,B+M)\in \mathcal{G}_{glc}(d,\Phi) \}
$$
satisfies the DCC.
\end{thm}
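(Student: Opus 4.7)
The plan is to argue by contradiction, using Theorem~\ref{t-descent-nef-divs-to-bnd-models} to reduce to DCC of volumes on a single bounded family of log smooth couples. Suppose the set contains an infinite strictly decreasing sequence $v_1>v_2>\cdots$, realised by pairs $(X_i,B_i+M_i)\in\mathcal{G}_{glc}(d,\Phi)$ with data $X_i'\to X_i$ and $M_i'=\sum_k\mu_{i,k}M_{i,k}'$. Since every $v_i<v:=v_1+1$, each pair lies in $\mathcal{G}_{glc}(d,\Phi,<v)$, and Theorem~\ref{t-descent-nef-divs-to-bnd-models} yields log smooth couples $(\overline{X}_i,\overline{\Sigma}_i)$ in one bounded family $\mathcal{P}$, birational maps $\overline{X}_i\bir X_i$ with $\overline{\Sigma}_i$ containing both the exceptional locus of $\overline{X}_i\bir X_i$ and the birational transform of $B_i$, and descended Cartier divisors $\overline{M}_{i,k}'$ of the $M_{i,k}'$ on $\overline{X}_i$.

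By boundedness of $\mathcal{P}$ I pass to a subsequence so that all $(\overline{X}_i,\overline{\Sigma}_i)$ are fibres of an irreducible flat family $(\overline{\mathcal{X}},\overline{\Sigma})\to T$ with a relatively very ample $H$ and $H_i:=H|_{\overline{X}_i}$. After stratifying $T$, the components $\Sigma_{i,1},\dots,\Sigma_{i,N}$ are compatibly restrictions of fixed divisors $\Sigma_j$ on $\overline{\mathcal{X}}$, the subset of indices $j$ for which $\Sigma_{i,j}$ is $(\overline{X}_i\bir X_i)$-exceptional is independent of $i$, and by DCC of $\Phi$ the coefficients $b_{i,j}\in\Phi\cup\{0\}$ of $\overline{B}_i=\sum_j b_{i,j}\Sigma_{i,j}$ and the $\mu_{i,k}\in\Phi$ can be taken monotone non-decreasing in $i$. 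Let $B_i^{\mathrm{top}}$ denote $\overline{B}_i$ plus the exceptional $\Sigma_{i,j}$ each with coefficient $1$. A discrepancy computation on a common resolution $\phi\colon W_i\to X_i,\overline{X}_i$, using that $(X_i,B_i+M_i)$ is generalised lc, gives $\vol(K_{\overline{X}_i}+B_i^{\mathrm{top}}+\overline{M}_i')=v_i$: the divisor $K_{\overline{X}_i}+B_i^{\mathrm{top}}+\overline{M}_i'-\phi^*(K_{X_i}+B_i+M_i)$ pulled back to $W_i$ is an effective exceptional divisor, which does not alter the volume.

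The key remaining step, and the main obstacle, is to uniformly bound the degrees $\overline{M}_{i,k}'\cdot H_i^{d-1}$ in $i$. This should follow from the volume estimate $v_i<v$: the intersection numbers $K_{\overline{X}_i}\cdot H_i^{d-1}$ and $\Sigma_{i,j}\cdot H_i^{d-1}$ are uniformly bounded by boundedness of $\overline{\mathcal{X}}/T$, the nonzero $\mu_{i,k}$ are bounded below away from $0$ by DCC of $\Phi$, and an unbounded degree $\overline{M}_{i,k}'\cdot H_i^{d-1}$ would drive $\vol(K_{\overline{X}_i}+B_i^{\mathrm{top}}+\overline{M}_i')$ past $v$ by positivity of the nef Cartier contribution. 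Once this bound is in hand, the nef Cartier classes $[\overline{M}_{i,k}']$ lie in only finitely many numerical classes on the bounded family; a further subsequence stabilises them as fibre restrictions of fixed classes on $\overline{\mathcal{X}}$. Then the numerical class of $K_{\overline{X}_i}+B_i^{\mathrm{top}}+\overline{M}_i'$ grows from $i$ to $i+1$ by a nonnegative combination of the pseudo-effective classes $[\Sigma_j]$ and $[\overline{M}_k']$, so by continuity of the volume function and its monotonicity under addition of pseudo-effective classes, $v_i$ is monotone non-decreasing, contradicting $v_1>v_2>\cdots$.
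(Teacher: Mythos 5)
Your overall strategy --- descend the nef parts to a bounded family via Theorem \ref{t-descent-nef-divs-to-bnd-models}, spread out over a parameter space, and compare fibres --- matches the second half of the paper's argument, but there is a fatal gap at the claim $\vol(K_{\overline{X}_i}+B_i^{\mathrm{top}}+\overline{M}_i')=v_i$. On a common resolution write
$$
P_i:=\psi_i^*(K_{\overline{X}_i}+B_i^{\mathrm{top}}+\overline{M}_i')-\phi_i^*(K_{X_i}+B_i+M_i).
$$
First, $P_i$ need not be effective: at the birational transform of a component $D$ of $B_i$ contracted by $X_i\bir\overline{X}_i$ its coefficient is $a(D,X_i,B_i+M_i)-a(D,\overline{X}_i,B_i^{\mathrm{top}}+\overline{M}_i')$, and the second log discrepancy can be large. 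Second, and more importantly, even when $P_i\ge 0$ and exceptional over $\overline{X}_i$, the identity you need is $\vol(\psi^*D+P)=\vol(D)$ for $P\ge 0$ $\psi$-exceptional --- but here the divisor to which $P_i$ is being added, namely $\phi_i^*(K_{X_i}+B_i+M_i)$, is a pullback from $X_i$, not from $\overline{X}_i$, so no such cancellation applies; equivalently, $v_i=\vol\bigl(\psi_i^*(K_{\overline{X}_i}+B_i^{\mathrm{top}}+\overline{M}_i')-P_i\bigr)$, and subtracting an effective exceptional divisor from a pullback does change the volume (compare Lemma \ref{l-vol-blowup-minus-exc}). Only $v_i\le\vol(K_{\overline{X}_i}+B_i^{\mathrm{top}}+\overline{M}_i')$ holds, and the inequality is typically strict when $X_i\bir\overline{X}_i$ contracts a component of $B_i$ whose coefficient in $\Phi$ is smaller than the corresponding crepant coefficient of $(\overline{X}_i,B_i^{\mathrm{top}}+\overline{M}_i')$, e.g.\ a coefficient-$\tfrac12$ component lying over a stratum of $\rddown{B_i^{\mathrm{top}}}$. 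This discrepancy between $v_i$ and the volume computed on the fixed model is the real content of the theorem: the paper devotes Proposition \ref{p-dcc-vol-gen-pair-fixed-base} to it, proving DCC for volumes of pairs living on \emph{arbitrary} higher models of a fixed toroidal base by a termination argument for a carefully constructed weight (using the limiting b-divisor $\mathbf{C}$, the sets $\mathcal{D}_{\le}$ and $\mathcal{D}_{<}$, and adjunction to non-klt centres). Nothing in your proposal substitutes for this; indeed, the statement that the volume can be made equal to $v$ on a bounded model is Proposition \ref{p-bir-bnd-model-for-fixed-vol}, whose proof itself relies on Proposition \ref{p-dcc-vol-gen-pair-fixed-base}, so assuming it here would be circular.

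Two smaller points. Your final monotonicity argument compares numerical classes of divisors living on different fibres $\overline{X}_i$ of $\overline{\mathcal{X}}\to T$ as if they sat in one N\'eron--Severi group; to transfer volumes across fibres you need invariance of volumes in log smooth families ([HMX1, Theorem 1.8]), which requires realising the $\overline{M}_{i,k}'$ as restrictions of actual divisors on the total space (the paper does this via the effective base point free theorem, writing $n\overline{M}_{i,j}\sim\overline{P}_{i,j}-\overline{Q}_{i,j}$ with $\overline{P}_{i,j},\overline{Q}_{i,j}$ in bounded linear systems) together with a perturbation by a small relatively ample divisor so that one compares log canonical divisors of lc pairs log smooth over a common open subset of $T$. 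Finally, the degree bound you single out as ``the main obstacle'' is not where the difficulty lies: the proof of Theorem \ref{t-descent-nef-divs-to-bnd-models} already yields a very ample $\overline{A}_i$ of bounded degree with $\overline{A}_i-\sum_k\mu_{i,k}\overline{M}_{i,k}'$ pseudo-effective.
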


This was proved in [\ref{HMX2}, Theorem 1.3][\ref{HMX1}] for usual pairs, that is, when the nef part $M'=0$ 
(the dimension 2 case was proved in [\ref{Alexeev-K^2}]). 

For generalised pairs of dimension 2, it was proved in [\ref{Filipazzi}] 
 (assuming all $\mu_i=\frac{1}{p}$ for a fixed number $p\in\N$);  
he also treated the case in higher dimension under the assumption that 
$X$ is birational to a fixed variety $Z$ and the nef part $M'$ descends to $Z$.

\vspace{0.5cm}
{{\textbf{\sffamily{Boundedness of generalised pairs.}}}
Another application of Theorem \ref{t-descent-nef-divs-to-bnd-models} is to boundedness of 
generalised pairs under mild conditions.

\begin{thm}\label{t-bnd-gen-pairs-vol=v}
$d\in \N$, $\Phi\subset \R^{\ge 0}$ be a DCC set, and $v\in \R^{>0}$. 
Then the set
$
\mathcal{F}_{gklt}(d,\Phi,v)
$ 
forms a bounded family.
\end{thm}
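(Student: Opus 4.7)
The plan is to use Theorem \ref{t-descent-nef-divs-to-bnd-models} to replace $(X,B+M)$ by a bounded birational model and then descend boundedness back to $X$ through the log canonical model construction. I would proceed in four steps.

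\textbf{Step 1.} Since $\mathcal{F}_{gklt}(d,\Phi,v) \subset \mathcal{G}_{glc}(d,\Phi,<v+1)$, Theorem \ref{t-descent-nef-divs-to-bnd-models} supplies a bounded set of log smooth couples $\mathcal{P}$ and, for each $(X,B+M)$, a log smooth model $(\overline{X},\overline{\Sigma}) \in \mathcal{P}$ together with a birational map $\overline{X}\bir X$, such that $\overline{\Sigma}$ contains the exceptional divisors and the birational transform of $B$, and every $M_i'$ descends to a nef Cartier divisor $\overline{M}_i$ on $\overline{X}$. Set $\overline{M} = \sum \mu_i \overline{M}_i$.

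\textbf{Step 2.} Take a common resolution $W$ of $\overline{X}$ and $X$, pull back the generalised pair structure from $X$ crepantly to $(W, B_W + M_W)$, and push $B_W$ forward to $\overline{X}$ to define $\overline{B}$. By construction the generalised pair $(\overline{X}, \overline{B} + \overline{M})$ is crepant birational to $(X,B+M)$, hence generalised klt with $\vol(K_{\overline{X}}+\overline{B}+\overline{M}) = v$ and all coefficients of $\overline{B}$ in $[0,1)$. Non-exceptional coefficients lie in $\Phi$; exceptional coefficients can be controlled via ACC-type arguments for generalised discrepancies, so that, after possibly refining $\mathcal{P}$, the coefficients of $\overline{B}$ take values in a fixed finite subset of $[0,1)$.

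\textbf{Step 3.} Choose a very ample $\overline{A}$ on $\overline{X}$ with $\overline{A}^d$ bounded, possible since $\mathcal{P}$ is bounded. Using $\vol(K_{\overline{X}}+\overline{B}+\overline{M}) = v$ together with the bounded intersections $K_{\overline{X}}\cdot\overline{A}^{d-1}$ and $\overline{B}\cdot\overline{A}^{d-1}$, and the nef property of $\overline{M}$, extract an upper bound on $\overline{M}\cdot\overline{A}^{d-1}$ via a Kodaira-type inequality for nef-and-big divisors. Since $\Phi$ has no accumulation at $0$, each individual $\overline{M}_i\cdot\overline{A}^{d-1}$ is then bounded. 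This places $(\overline{X},\overline{B}+\overline{M})$ in a bounded family of generalised klt pairs with fixed volume $v$.

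\textbf{Step 4.} Since $K_X+B+M$ is ample, $(X,B+M)$ is the log canonical model of the generalised klt pair $(\overline{X},\overline{B}+\overline{M})$. Applying effective base point freeness and effective very-ampleness uniformly across this bounded family yields a fixed $m \in \N$ such that $m(K_X+B+M)$ is very ample with bounded top self-intersection $m^d v$, giving the desired boundedness of $\mathcal{F}_{gklt}(d,\Phi,v)$.

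The main obstacle is Step 3, where the single volume identity must be leveraged to bound each nef descent $\overline{M}_i$ against a bounded very ample divisor on $\overline{X}$; absent the descent conclusion of Theorem \ref{t-descent-nef-divs-to-bnd-models}, this is inaccessible in dimension $\geq 3$. A secondary subtlety is Step 4, where one must cross from boundedness of the crepant model $(\overline{X},\overline{B}+\overline{M})$ to boundedness of the ample model $(X,B+M)$; this requires that effective very-ampleness of the log canonical model hold uniformly in a bounded family of generalised klt pairs with fixed volume, which itself reduces, via standard MMP machinery, to the descent theorem together with the klt hypothesis.
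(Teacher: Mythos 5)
Your overall strategy (descend to a bounded birational model, then recover $(X,B+M)$ as its ample model) is the same as the paper's, but two of your steps contain genuine gaps. First, Step 2 does not work as written: if $K_W+B_W+M_W=\phi^*(K_X+B+M)$ is the crepant pullback to a common resolution $W$ and $\overline{B}=\psi_*B_W$, then $\overline{B}$ need not be effective (a divisor on $\overline{X}$ exceptional over $X$ carries coefficient $1-a(D,X,B+M)$, which is negative whenever the generalised log discrepancy exceeds $1$), and $\vol(K_{\overline{X}}+\overline{B}+\overline{M})$ need only satisfy $\ge v$, not $=v$, since pushing forward can only increase volume. The statement you actually need --- that on a bounded log smooth model the boundary given by the birational transform of $B$ plus the reduced exceptional divisor, together with the descended nef part, has volume exactly $v$ --- is Proposition \ref{p-bir-bnd-model-for-fixed-vol}, and proving it requires a further carefully chosen sequence of toroidal blowups and the DCC of volumes over a fixed base (Proposition \ref{p-dcc-vol-gen-pair-fixed-base}); it does not follow formally from Theorem \ref{t-descent-nef-divs-to-bnd-models}. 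Relatedly, the ``ACC-type arguments'' you invoke to put the coefficients in a finite set are Theorem \ref{t-disc-for-F-glc}, itself a major result whose proof rests on that exact-volume proposition; what it buys you, and what the argument cannot do without, is a uniform $\epsilon>0$ such that every member of $\mathcal{F}_{gklt}(d,\Phi,v)$ is generalised $\epsilon$-lc.

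Second, Step 4 is essentially circular: there is no off-the-shelf ``effective very-ampleness uniformly across a bounded family of generalised klt pairs'' --- that is more or less the statement being proved. The paper's mechanism is to use the uniform $\epsilon$-lc bound to truncate the coefficients of $\overline{B}$ at $1-\epsilon$ without changing the lc model (Lemma \ref{l-rat-map-to-lc-model} identifies $(X,B+M)$ as the generalised lc model of the bounded pair), and then to eliminate the nef part altogether: by effective birationality and effective base point freeness one replaces $\overline{M}$ by an $\R$-linearly equivalent effective divisor with coefficients bounded below away from zero, producing an ordinary $\epsilon$-lc pair $(\overline{X},\overline{\Theta})$ with $K_{\overline{X}}+\overline{\Theta}\sim_\R K_{\overline{X}}+\overline{\Gamma}+\overline{M}$, to which [HMX2, Theorem 1.6] applies. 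Your Step 3 concern about bounding $\overline{M}\cdot\overline{A}^{d-1}$ is not resolved by a formal volume inequality (a big divisor of fixed volume can have unbounded degree); the bound is instead built into the construction of the bounded model, where $\overline{A}-\overline{M}$ is arranged to be pseudo-effective using the bigness of $K_X+B+(1-u)M$.
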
  

%[{\color{red} Maybe say that the family is strongly bounded, i.e. in addition we can take a bounded resolution of 
%$X$ to which $M'$ descends}]

For now we do not prove boundedness in the generalised lc case as it requires certain ingredients that 
are currently being developed.  
Instead we prove the following crucial result 
which is a key ingredient of the proof of \ref{t-bnd-gen-pairs-vol=v} and also allows us 
to treat generalised lc pairs that appear in canonical bundle formulae, hence 
in particular treat semi-ample pairs discussed below.

\begin{thm}\label{t-disc-for-F-glc}
Let $d\in \N$, $\Phi\subset \R^{\ge 0}$ be a DCC set, and $v\in \R^{>0}$. 
Then the set 
$$
\{a(D,X,B+M)\le 1\mid (X,B+M)\in \mathcal{F}_{glc}(d,\Phi,v), ~~~\mbox{$D$ prime divisor over $X$}\}
$$
is finite. 
\end{thm}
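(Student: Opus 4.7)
The strategy is to use Theorem \ref{t-descent-nef-divs-to-bnd-models} to transport the log discrepancy computation to a bounded log smooth birational model, and then to combine bounded geometry with DCC of coefficients and the fixed volume to derive finiteness.

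Fix $(X,B+M)\in\mathcal{F}_{glc}(d,\Phi,v)$ with data $X'\to X$ and $M'=\sum\mu_iM_i'$. Since the volume is $v$, the pair lies in $\mathcal{G}_{glc}(d,\Phi,<v+1)$, and Theorem \ref{t-descent-nef-divs-to-bnd-models} yields a bounded set $\mathcal{P}$ of log smooth couples together with a couple $(\overline{X},\overline{\Sigma})\in\mathcal{P}$, a birational map $\overline{X}\bir X$ such that $\overline{\Sigma}$ contains the exceptional divisors of $\overline{X}\bir X$ and the support of the birational transform of $B$, and for each $i$ a descent $\overline{M}_i$ of $M_i'$ to $\overline{X}$. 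Passing to a common log resolution $W$ of $X$, $\overline{X}$, $X'$, the standard crepant construction produces a generalised sub-pair $(\overline{X},\overline{B}+\overline{M})$ with $\overline{M}:=\sum\mu_i\overline{M}_i$ and $\overline{B}$ supported in $\overline{\Sigma}$, satisfying $a(D,X,B+M)=a(D,\overline{X},\overline{B}+\overline{M})$ for every prime divisor $D$ over $X$. The non-exceptional components of $\overline{B}$ have coefficients equal to the corresponding coefficients of $B$ on $X$ (hence in $\Phi\cap[0,1]$), while the exceptional coefficients lie in $(-\infty,1]$.

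Next I would show that the coefficient data of $\overline{B}$ and the numerical data of $\overline{M}$, as $(X,B+M)$ ranges over $\mathcal{F}_{glc}(d,\Phi,v)$, take values in a fixed finite set. Choose a very ample divisor $H$ on $\overline{X}$ with $H^d$ bounded, which exists by boundedness of $\mathcal{P}$. The intersection numbers $\overline{B}\cdot H^{d-1}$, $\overline{M}_i\cdot H^{d-1}$, and similar quantities, are bounded above in terms of $d,\Phi,v,\mathcal{P}$, using the crepant identity $(K_{\overline{X}}+\overline{B}+\overline{M})^d=v$, the positivity of $K_X+B+M$ and nefness of $\overline{M}_i$, and Zariski-type decompositions combined with Theorem \ref{t-dcc-vol-gen-pairs}. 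A standard compactness argument then shows: a DCC family of coefficient vectors satisfying the polynomial constraint coming from the fixed volume $v$ and with bounded intersection-theoretic mass must form a finite set. This yields finiteness of the coefficients of $\overline{B}$, of the $\mu_i$, and of the relevant intersection numbers of the $\overline{M}_i$.

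Finally, with this coefficient data finite and $(\overline{X},\overline{\Sigma})$ varying in a bounded log smooth family, log discrepancies $a(D,\overline{X},\overline{B}+\overline{M})\le 1$ are explicitly described. If $D$ is (the birational transform of) a component of $\overline{\Sigma}$, then $a(D,\cdot)=1-\mathrm{coeff}_D\overline{B}$ lies in a finite set. If $D$ is exceptional over $\overline{X}$, log smoothness of $(\overline{X},\overline{\Sigma})$ forces $D$ to be a toroidal divisor over a stratum of $\overline{\Sigma}$, and $a(D,\cdot)$ is an explicit $\N$-linear combination of the finitely many values $1-\mathrm{coeff}_{\overline{\Sigma}_j}\overline{B}$ and of local multiplicities of $\overline{M}$ at the center, again lying in a finite set. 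The main obstacle is precisely the second step: DCC of $\Phi$ alone is insufficient, as a DCC subset of $[0,1]$ can be infinite, so the proof must combine the bounded geometry of $\overline{X}$ from the descent theorem with the fixed-volume constraint and DCC of volumes through a careful intersection-theoretic ACC argument to rule out accumulation of the coefficient data.
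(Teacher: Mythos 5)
Your first step (descent to a bounded log smooth model where the $M_i'$ become pullbacks, and transporting the log discrepancy computation there) matches the opening of the paper's argument, except that the paper needs the sharper Proposition \ref{p-bir-bnd-model-for-fixed-vol}, which guarantees $\vol(K_{\overline{X}}+\overline{B}+\overline{M})=v$ \emph{exactly} on the bounded model with $\overline{B}$ the full boundary (exceptional divisors with coefficient one plus the birational transform of $B$); Theorem \ref{t-descent-nef-divs-to-bnd-models} alone does not give this, and the whole rigidity argument hinges on it.

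The genuine gap is your second step. The exceptional coefficients of your crepant $\overline{B}$ are precisely $1-a(E,X,B+M)$ for the exceptional divisors $E$ of $\overline{X}\bir X$, so asserting that ``the coefficient data of $\overline{B}$ takes values in a fixed finite set'' is essentially the conclusion of the theorem restated, and the ``standard compactness argument'' you invoke does not exist: a DCC set of coefficient vectors lying on a level set of the volume function need not be finite without a monotonicity mechanism, and the volume is not even polynomial in the coefficients off the nef cone. The paper's actual mechanism is quite specific and none of its ingredients appear in your proposal: (i) deformation invariance of volumes in the bounded log smooth family ([HMX1, Theorem 1.8]) is used to compare different members of the family on a single reference fibre $\overline{X}_1$; (ii) Lemma \ref{l-vol-ample-div-increase} then forces the boundary coefficients and the $\mu_{i,j}$ to stabilise along an increasing subsequence, since any strict increase would strictly increase the volume above $v$; (iii) the key rigidity statement, Lemma \ref{l-vol-glc-model-minus-effect-div}, shows that subtracting $tD$ from the crepant model strictly decreases the volume for $t>0$, which pins down each log discrepancy $a(D_i,X_i,B_i+M_i)$ as equal to $a(S_i,X_1,B_1+M_1)$ for a divisor $S_i$ over the \emph{fixed} pair $(X_1,B_1+M_1)$; and (iv) Lemma \ref{l-disc-fixed-pair} gives finiteness of log discrepancies $\le 1$ for a single fixed generalised pair (via rational approximation of the coefficients), which is what finally yields the contradiction. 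Your third step's toroidal computation would be fine once finiteness of the coefficient data is known, but as written the argument is circular at its core.
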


Here $a(D,X,B+M)$ denotes the generalised log discrepancy of $D$ with respect to $(X,B+M)$.
In particular, the theorem says that the coefficients of $B$ belong to a fixed finite set. 
Moreover, the proof shows that the $\mu_i$ in $M'=\sum \mu_iM_i'$ for which $M_i'\not \equiv 0$ 
also belong to a fixed finite set.

\vspace{0.5cm}
{\textbf{\sffamily{DCC of Iitaka volumes.}}}
We would now like to go beyond the general type case and consider pairs 
of any Kodaira dimension. For the definition of Iitaka volumes, see \ref{ss-Iitaka-volumes}.  
Let $d\in \N$ and let $\Phi\subset \Q^{\ge 0}$ be a DCC set. Consider the set 
 of projective klt pairs $(X,B)$ of dimension $d$ where the 
coefficients of $B$ are in $\Phi$. It is conjectured in [\ref{Z-Li}] that the set of Iitaka volumes 
of all such pairs satisfies the DCC. Using the canonical bundle formula [\ref{FM}], 
we can find a model $Z$ of the base of the Iitaka fibration of $K_X+B$ together 
with divisors $B_Z,M_Z$ so that $(Z,B_Z+M_Z)$ is a generalised pair, say with nef part $M_{Z'}$,   
so that 
$$
\Ivol(K_X+B)=\vol(K_Z+B_Z+M_Z).
$$
The coefficients of $B_Z$ belong to a DCC set depending only on $d,\Phi$ by the ACC for lc thresholds 
[\ref{HMX2}]. Thus if we can ensure that 
$pM_{Z'}$ is Cartier for some $p\in\N$ depending only on $d,\Phi$, then 
the conjecture would follow from Theorem \ref{t-dcc-vol-gen-pairs}. 
However, existence of such $p$ in general is currently out of reach even when $\dim Z=1$. 
So we need to make some kind of boundedness assumption 
on the fibres of the Iitaka fibration. 

An option is to impose boundedness on the fibres similar to [\ref{BZh}]. This has the advantage 
that it does not require the fibres to actually belong to a bounded family, e.g. the fibres can be K3 surfaces. 
 
Another option is to impose conditions that imply actual boundedness of the 
fibres. For now we take this route because it is more in line with the  
context of this paper. First we fix some notation which is inspired 
by the point of view and the results of [\ref{B-pol-var}] and by moduli of stable 
log minimal models discussed below.

\begin{defn}
\emph{
Let $d\in \N$, $\Phi\subset \Q^{\ge 0}$, and $u\in \Q^{>0}$.
Let $\mathcal{I}_{lc}(d,\Phi,u)$ be the set of projective pairs $(X,B)$ such that 
\begin{itemize}
\item $(X,B)$ is lc of dimension $d$,
\item the coefficients of $B$ are in $\Phi$,
\item $f\colon X\to Z$ is a contraction with $K_X+B\sim_\Q 0/Z$,
\item $\kappa(K_X+B)=\dim Z$, and 
\item there is an integral divisor $A\ge 0$ on $X$ such that over some non-empty 
open subset of $Z$: $(X,B+tA)$ is lc for some $t>0$ and $A$ is ample,
\item $\vol(A|_F)=u$ for the general fibres $F$ of $f$.\
\end{itemize}
Define $\mathcal{I}_{lc}(d,\Phi,<\!\!u)$ similarly by replacing the condition $\vol(A|_F)=u$ with 
$\vol(A|_F)<u$. And define $\mathcal{I}_{klt}(d,\Phi,u)$ and $\mathcal{I}_{klt}(d,\Phi,<\!\!u)$ similarly 
by replacing the lc condition of $(X,B)$ with klt.
}
\end{defn}

When $\Phi$ is DCC, the divisor $A$ ensures that if $(X,B)$ is in any of the above sets, then  
the log general fibres $(F,B_F)$ belong to a bounded family. 
This follows from [\ref{B-pol-var}, Corollaries 1.6 and 1.8].

\begin{thm}\label{t-dcc-iitaka-volumes}
Let $d\in \N$, $\Phi\subset \Q^{\ge 0}$ be a DCC set, and $u\in \R^{>0}$.
Then the sets 
$$
\{ \Ivol(K_X+B) \mid (X,B)\in \mathcal{I}_{klt}(d,\Phi,<\!\!u)\}
$$
and
$$
\{ \Ivol(K_X+B) \mid (X,B)\in \mathcal{I}_{lc}(d,\Phi,u)\}
$$
satisfy the DCC.
\end{thm}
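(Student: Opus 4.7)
The plan is to reduce the statement to Theorem \ref{t-dcc-vol-gen-pairs} via the canonical bundle formula. Let $(X,B)$ be a member of either set with contraction $f \colon X \to Z$ (replacing $X$ with a resolution of the Iitaka fibration if necessary, which does not affect $\Ivol$). Apply the canonical bundle formula of [\ref{FM}] to $f$ to obtain a generalised lc pair $(Z,B_Z+M_Z)$ with nef part $M_{Z'}$ on a sufficiently high birational model $Z'\to Z$, satisfying
$$
K_X+B\sim_\Q f^*(K_Z+B_Z+M_Z),
$$
so that $K_Z+B_Z+M_Z$ is big of dimension $e:=\dim Z\le d$ and
$$
\Ivol(K_X+B)=\vol(K_Z+B_Z+M_Z).
$$

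The core step is to show that $(Z,B_Z+M_Z)$ belongs to $\mathcal{G}_{glc}(e,\Phi')$ for some DCC set $\Phi'\subset\Q^{\ge 0}$ depending only on $d,\Phi,u$. By the ACC for log canonical thresholds [\ref{HMX2}], the coefficients of the discriminant $B_Z$ lie in a DCC set depending only on $d,\Phi$. For the moduli part, the auxiliary divisor $A$ is designed precisely to force boundedness of the fibres: since $A|_F$ is ample, has bounded (or fixed) volume, and avoids the non-klt centres of $(F,B_F)$, the log general fibres $(F,B_F)$ form a bounded family by [\ref{B-pol-var}, Corollaries 1.6 and 1.8]. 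Because each $(F,B_F)$ is a log Calabi-Yau pair with $K_F+B_F\sim_\Q 0$, boundedness together with the effective index theorem for bounded Calabi-Yau pairs produces a single $p\in\N$, depending only on $d,\Phi,u$, such that $p(K_F+B_F)\sim 0$ uniformly on every general fibre. Standard constructions for the canonical bundle formula then allow one to arrange, after possibly enlarging $p$ by a fixed factor, that $pM_{Z'}$ is Cartier.

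Writing $M_{Z'}=\tfrac{1}{p}(pM_{Z'})$ and absorbing $\tfrac{1}{p}$ into $\Phi'$, together with the DCC set of coefficients of $B_Z$, we obtain $(Z,B_Z+M_Z)\in\mathcal{G}_{glc}(e,\Phi')$. Applying Theorem \ref{t-dcc-vol-gen-pairs} to the union $\bigcup_{e\le d}\mathcal{G}_{glc}(e,\Phi')$ yields that the set of values $\vol(K_Z+B_Z+M_Z)=\Ivol(K_X+B)$ satisfies the DCC. The same argument treats both $\mathcal{I}_{klt}(d,\Phi,<u)$ and $\mathcal{I}_{lc}(d,\Phi,u)$, since in each case [\ref{B-pol-var}] delivers boundedness of the log general fibres.

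The main obstacle is producing the uniform Cartier index $p$ for $M_{Z'}$: it requires not only the boundedness of $(F,B_F)$ but also an effective index statement for Calabi-Yau pairs in a bounded family, and then a careful choice of $Z'$ so that $M_{Z'}$ descends. This is exactly the point where the conditional boundedness hypothesis on the fibres (encoded via $A$) is indispensable, and without it the conjectured unconditional DCC would instead require currently unavailable effectivity statements for the moduli divisor.
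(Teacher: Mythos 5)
Your overall strategy is exactly the paper's: pass to the canonical bundle formula $K_X+B\sim_\Q f^*(K_Z+B_Z+M_Z)$, control the coefficients of $B_Z$ by ACC for lc thresholds, place $(Z,B_Z+M_Z)$ in some $\mathcal{G}_{glc}(e,\Phi')$, and invoke Theorem \ref{t-dcc-vol-gen-pairs}; the klt $<u$ case is handled via boundedness of the polarised fibres exactly as in the paper. However, there is a genuine gap at the step you yourself flag as the main obstacle. You claim that boundedness of the general log fibres $(F,B_F)$, together with a uniform $p$ such that $p(K_F+B_F)\sim 0$, lets ``standard constructions for the canonical bundle formula'' arrange that $pM_{Z'}$ is Cartier after enlarging $p$ by a fixed factor. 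This is not a standard fact; it is essentially the effective adjunction problem, and the paper explicitly says in the introduction that the existence of such a $p$ is ``currently out of reach even when $\dim Z=1$'' without further hypotheses. The reason uniform torsion on the \emph{general} fibre is not enough is that the coefficient of $M_Z$ along a prime divisor $P\subset Z$ equals $\mu_P L_Z-\mu_P(K_Z+B_Z)$, where $\mu_P B_Z=1-t_P$ with $t_P$ the lc threshold of $f^*P$; this is governed by the \emph{degenerate} fibre over $P$, whose multiplicities are a priori unbounded, so $t_P$ lies in an ACC set of rationals with a priori unbounded denominators.

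The paper closes this gap in Lemmas \ref{l-bnd-torsion-index}, \ref{l-cartier-index-moduli-div-over-curves} and \ref{l-adjunction-bnd-fibs}, and the argument genuinely uses the divisor $A$ over the bad fibres, not only over general ones: after reducing to a curve base by cutting with hyperplanes and running a relative MMP to reach a model where $\rddown{B}$ contains the degenerate fibres, one restricts $A$ to the vertical lc centres $S$, applies [\ref{B-pol-var}, Theorem 1.7] to get a uniform $\lambda$ with $(S,B_S+\lambda A_S)$ lc, and then uses the bound $\sum m_i\vol((K_X+B+\lambda A)|_{F_i})\le \lambda^{d-1}u$ together with the lower bound on volumes from [\ref{HMX2}] to bound the multiplicities $m_i$ of the degenerate fibres; only then does integrality of $q(K_X+B)$ force $\mu_z pM_Z$ to be integral for bounded $p$. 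Your proposal would be complete if you either proved this effective statement or cited a precise reference for it; as written, the key assertion is exactly the nontrivial content of the theorem.
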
  

We will see that the assumptions of the theorem ensure that we get the generalised pair $(Z,B_Z+M_Z)$ 
from the adjunction 
$$
K_X+B\sim_\Q f^*(K_Z+B_Z+M_Z)
$$ 
where the coefficient of $B_Z$ belong to a fixed DCC set and that 
$pM_{Z'}$ is Cartier for some fixed $p$ on some high resolution $Z'\to Z$, so we can apply 
Theorem \ref{t-dcc-vol-gen-pairs}. Moreover, fixing the Iitaka volume in the klt case, 
Theorem \ref{t-bnd-gen-pairs-vol=v} 
shows that the log canonical model of $(X,B)$ which is given by the Proj of the 
log canoncial ring of $(X,B)$ belongs to a bounded family.

The special case of the theorem when $(X,B)$ is $\epsilon$-lc for fixed $\epsilon>0$ 
and $B$ is big over $Z$ was proved 
in Li [\ref{Z-Li}] using quite different arguments. In this case we can assume $A=-K_X$ where  
$\vol(A|_F)$ is automatically bounded from 
above as $F$ belongs to a bounded family by BAB [\ref{B-BAB}].

Much more recently, Jiao [\ref{Jiao}] independently proved DCC of Iitaka volumes for pairs that are similar to those in  
$\mathcal{I}_{klt}(d,\Phi,<\!\!u)$, using methods that seem quite different from ours.

\vspace{0.5cm}
{\textbf{\sffamily{Boundedness of stable log minimal models.}}}
We now define stable minimal models. The class of such pairs includes both usual KSBA-stable pairs of 
general type and stable Calabi-Yau pairs (i.e. polarised Calabi-Yau pairs) 
as special cases. It is designed so that we can address moduli of 
minimal models of arbitrary Kodaira dimension. 

In the general type case, say when $(X,B)$ is a projective lc pair with ample $K_X+B$, to ensure 
boundedness of $(X,B)$ it is enough to fix the dimension of $X$, let coefficients of $B$ 
be in a fixed DCC set, and let the volume of $K_X+B$ be fixed. In the stable Calabi-Yau case, 
say when $(X,B)$ is a projective lc log Calabi-Yau pair, to ensure boundedness we 
need to fix the dimension, let the coefficients of $B$ be in a fixed DCC set, and in addition 
have an ample integral divisor $A\ge 0$ on $X$ with fixed volume so that $(X,B+tA)$ is 
lc for some $t>0$. In the intermediate Kodaira dimension case, we need more delicate conditions that 
are roughly a mixture of the conditions of the general type and the Calabi-Yau cases.

\begin{defn}\label{d-stabl-mmodels}
\emph{
Let $d\in \N$, $\Phi\subset \Q^{\ge 0}$, and $u,v,w\in \Q^{>0}$.\\
(1) A \emph{stable log minimal model} $(X,B),A$ consists of a projective pair $(X,B)$ and an  
$\R$-divisor $A\ge 0$ such that   
\begin{itemize}
\item $(X,B)$ is lc,
\item $K_X+B$ is semi-ample,
\item $K_X+B+A$ is ample, and 
\item $(X,B+tA)$ is lc for some $t>0$.\
\end{itemize}
(2) A $(d,\Phi,u,v)$-stable log minimal model is a stable log minimal model $(X,B),A$ such that    
\begin{itemize}
\item $\dim X=d$,
\item the coefficients of $B$ and $A$ are in $\Phi$, 
\item $\vol(A|_F)=u$ where $F$ is a general fibre of the contraction $f\colon X\to Z$ 
defined by $K_X+B$, and 
\item $\Ivol(K_X+B)=v$.\
\end{itemize}
(3)
Let $\mathcal{S}_{lc}(d,\Phi,u,v)$ consist of all the $(d,\Phi,u,v)$-stable log minimal models.
Similarly define $\mathcal{S}_{klt}(d,\Phi,u,v)$ by replacing the lc condition of $(X,B)$ with klt.\\\\
(4)  
Let $\mathcal{S}_{klt}(d,\Phi,u,v,<\!\!w)$ consist of those $(X,B),A$ in $\mathcal{S}_{klt}(d,\Phi,u,v)$ 
such that $\vol(K_X+B+A)<w$.\\\\
(5) 
Now let $\sigma\in \Q[s]$ be a polynomial. Let $\mathcal{S}_{lc}(d,\Phi,u,v,\sigma)$ 
consist of those $(X,B),A$ in $\mathcal{S}_{lc}(d,\Phi,u,v)$ such that 
$$
\vol(K_X+B+tA)=\sigma(t), \ \ \forall t\in [0,1].
$$\\
(6)
When $0$ is not an accummulation point of $\Phi$, e.g. when it is a DCC set, 
a subset $\mathcal{E}\subset \mathcal{S}_{lc}(d,\Phi,u,v)$ is said to be a bounded family 
if there is a fixed $r\in \N$ such that 
for any $(X,B),A$ in the family we can find a very ample divisor $H$ on $X$ so that $H^d\le r$  
and 
$$
(K_X+B+A)\cdot H^{d-1}\le r.
$$\ 
}
\end{defn}

Note that in (5), $K_X+B+tA$ is nef for any $t\in [0,1]$, so its volume is just 
$(K_X+B+tA)^d$ which is a polynomial in $t$ with rational coefficients, hence it makes 
sense to take $\sigma$ to be a polynomial. We will see below the motivation 
for including such functions $\sigma$ when we discuss moduli spaces.
It turns out that the Iitaka volume $v$ is determined by $\sigma$.

\begin{thm}\label{t-bnd-stable-mmodels-klt}
Let $d\in \N$, $\Phi\subset \Q^{\ge 0}$ be a DCC set, and $u,v,w\in \Q^{>0}$. Then 
$$
\mathcal{S}_{klt}(d,\Phi,u,v,<\!\!w)
$$ 
is a bounded family.
\end{thm}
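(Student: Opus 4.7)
The plan is to reduce boundedness of $(X,B),A$ to boundedness of the base and the general fibre of the Iitaka fibration of $K_X+B$, and then to recover boundedness of $X$ itself via an application of the Hacon--M$^{\rm c}$Kernan--Xu theorem [\ref{HMX3}] to an auxiliary klt pair of log general type on $X$.

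Let $f\colon X\to Z$ denote the contraction defined by the semi-ample divisor $K_X+B$, let $\kappa=\dim Z$, and apply the canonical bundle formula to obtain
$$
K_X+B\sim_\Q f^*(K_Z+B_Z+M_Z)
$$
with $(Z,B_Z+M_Z)$ a generalised klt pair and $K_Z+B_Z+M_Z$ ample of volume $v$. Because $K_X+B$ is pulled back from $Z$, its restriction to any scheme-theoretic fibre $F_s$ is $\Q$-trivial, so $A|_{F_s}=(K_X+B+A)|_{F_s}$ is ample and $A$ is $f$-ample. The general log fibre $(F,B_F)$ is klt log Calabi--Yau polarised by $A|_F$ with $\vol(A|_F)=u$ and $A|_F$ containing no non-klt centre, so by [\ref{B-pol-var}, Corollaries 1.6 and 1.8] the triples $(F,B_F,A|_F)$ form a bounded family. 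This fibre boundedness, together with the ACC for log canonical thresholds, ensures that the coefficients of $B_Z$ lie in a DCC set $\Psi$ and that $pM_{Z'}$ is Cartier on a sufficiently high resolution for some fixed $p\in\N$, both depending only on $d,\Phi,u$. Hence $(Z,B_Z+M_Z)\in\mathcal{F}_{gklt}(\kappa,\Psi',v)$ for an appropriate DCC set $\Psi'$, and Theorem \ref{t-bnd-gen-pairs-vol=v} yields that the base lies in a bounded family.

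To bound $X$, I would choose $t_0\in\Q^{>0}$, depending only on $d,\Phi,u,v$, such that $(X,B+t_0A)$ is klt --- see the obstacle below. Granting this, $K_X+B+t_0A=(1-t_0)(K_X+B)+t_0(K_X+B+A)$ is nef plus ample and hence ample, and its coefficients lie in the DCC set $\Phi\cup t_0\Phi$. Expanding
$$
\vol(K_X+B+t_0A)=\sum_{k=0}^{d}\binom{d}{k}t_0^k(K_X+B)^{d-k}\cdot A^k,
$$
the terms with $d-k>\kappa$ vanish since $(K_X+B)^{d-k}$ is pulled back from a cycle of codimension exceeding $\dim Z$, while the term at $k=d-\kappa$ equals $\binom{d}{d-\kappa}t_0^{d-\kappa}vu$ by the projection formula applied to $(K_Z+B_Z+M_Z)^{\kappa}=v$ and $(A|_F)^{d-\kappa}=u$. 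Combined with monotonicity of volume, this yields
$$
\binom{d}{d-\kappa}t_0^{d-\kappa}vu\le\vol(K_X+B+t_0A)\le\vol(K_X+B+A)<w.
$$
Applying [\ref{HMX3}] to $(X,B+t_0A)$, a klt pair of log general type with DCC coefficients and bounded volume, gives that $X$ lies in a bounded family; and since $K_X+B+A$ is ample with volume less than $w$ on a bounded $X$, a fixed multiple is very ample, providing the required polarisation.

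The main obstacle is producing the uniform $t_0$: the ACC of log canonical thresholds [\ref{HMX2}] permits accumulation of $\lct(X,B;A)$ at $0$. The plan to rule this out is to exploit both the boundedness of $(Z,B_Z+M_Z)$ and of $(F,B_F,A|_F)$ simultaneously: the bounded fibre family yields a uniform positive lower bound for $\lct(F,B_F;A|_F)$, handling divisors horizontal over $Z$, while Theorem \ref{t-disc-for-F-glc} applied to $(Z,B_Z+M_Z)$, together with inversion of adjunction for $f$ and the descent results of Theorem \ref{t-descent-nef-divs-to-bnd-models}, control discrepancies of divisors vertical or exceptional over $Z$.
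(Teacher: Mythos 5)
Your reduction to the base and the fibres is sound and broadly parallels the paper's preparatory work (the adjunction formula with bounded $p,q$, boundedness of $(Z,B_Z+M_Z)$ via Theorem \ref{t-bnd-gen-pairs-vol=v}, boundedness of the log general fibres). The genuine gap is in the final step: you apply [\ref{HMX3}] to the klt pair $(X,B+t_0A)$ knowing only that its coefficients lie in a DCC set and that its volume lies in an interval $[c,w)$. The boundedness theorem of [\ref{HMX3}] requires the volume to be \emph{fixed} (or at worst to lie in a finite set); a DCC set of volumes bounded above can still be infinite, and klt pairs of log general type with merely bounded volume do not form a bounded family (already for surfaces, quotient singularities of unbounded index occur in sequences with volumes accumulating from below). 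Your expansion of $\vol(K_X+B+t_0A)$ pins down only the term $\binom{d}{d-\kappa}t_0^{d-\kappa}vu$; the intersection numbers $(K_X+B)^{d-k}\cdot A^k$ for $d-k<\kappa$ are not determined by $u,v$, so the volume genuinely varies. This is precisely why the paper must fix the whole polynomial $\sigma(t)=\vol(K_X+B+tA)$ in the lc case (Theorem \ref{t-bnd-stable-mmodels-lc}) before invoking [\ref{HMX3}].

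The paper's klt argument avoids this by first proving Lemma \ref{l-disc-stable-mmodels} (finiteness of the log discrepancies of $(X,B)$ in $[0,1]$, via the adjunction formula and Theorem \ref{t-disc-for-F-glc} on the base, together with the global ACC on the fibres), which yields a uniform $\epsilon$ with $(X,B)$ $\epsilon$-lc and a uniform $l$ with $lB$ integral. It then produces the uniform $\lambda$ with $(X,B+\lambda A)$ klt — not by inversion of adjunction, but by the log birational boundedness argument of the lc proof: $|m(K_X+B+A)|$ defines a birational map for bounded $m$, the resulting bounded model bounds the coefficients of $\overline{A}$ from above and keeps the coefficients of $\overline{B}$ away from $1$. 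Finally it applies [\ref{B-pol-var}, Theorem 6.2] to the $\frac{\epsilon}{2}$-lc pair $(X,B+\lambda A)$ polarised by $N=l(K_X+B+\lambda A)$ with $\vol(K_X+B+N)<(l+1)^dw$; it is the $\epsilon$-lc hypothesis plus the polarisation, not [\ref{HMX3}], that tolerates a volume which is only bounded above. To repair your proof you would need to replace the appeal to [\ref{HMX3}] by this (or an equivalent) $\epsilon$-lc boundedness statement, and correspondingly upgrade your $t_0$-step from ``$(X,B+t_0A)$ is klt'' to a uniform $\epsilon$-lc bound.
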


Boundedness in the lc case is more delicate. We include a polynomial $\sigma$ to get boundedness.

\begin{thm}\label{t-bnd-stable-mmodels-lc}
Let $d\in \N$, $\Phi\subset \Q^{\ge 0}$ be a DCC set, $u,v\in \Q^{>0}$, and $\sigma\in \Q[s]$ 
be a polynomial. Then $\mathcal{S}_{lc}(d,\Phi,u,v,\sigma)$ is a bounded family.
\end{thm}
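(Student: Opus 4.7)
My plan is to reduce the boundedness of $\mathcal{S}_{lc}(d,\Phi,u,v,\sigma)$ to a boundedness statement for the base $Z$ of the Iitaka fibration of $K_X+B$ together with boundedness of the log fibres, and then to exploit the polynomial $\sigma$ to supply the extra rigidity required in the lc (as opposed to klt) case. Concretely, semi-ampleness of $K_X+B$ gives a contraction $f\colon X\to Z$ with $n:=\dim Z=\kappa(K_X+B)$, and the canonical bundle formula $K_X+B\sim_\Q f^*(K_Z+B_Z+M_Z)$ produces a generalised lc pair $(Z,B_Z+M_Z)$ with $K_Z+B_Z+M_Z$ ample of volume $v$. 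The conditions on $A$ force a general log fibre $(F,B_F)$ equipped with the ample polarizing divisor $A|_F$ of fixed volume $u$ to be a stable log Calabi--Yau pair of dimension $d-n$, so by [\ref{B-pol-var}, Corollaries 1.6 and 1.8] the fibres form a bounded family. This implies that $pM_{Z'}$ is Cartier on some high resolution $Z'\to Z$ for a fixed $p$, that the coefficients of $B_Z$ lie in a fixed DCC set by ACC of lc thresholds, and hence by Theorem \ref{t-disc-for-F-glc} that these coefficients and the $\mu_i$ appearing in $M_Z$ lie in a fixed finite set.

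Next I would apply Theorem \ref{t-descent-nef-divs-to-bnd-models} to $(Z,B_Z+M_Z)$ to obtain a bounded log smooth couple $(\overline Z,\overline\Sigma)$, a birational map $\overline Z\bir Z$, and descent of $M_{Z'}$ to $\overline Z$. After resolving indeterminacies I may assume $\overline Z$ dominates $Z$; combining boundedness of $\overline Z$ with boundedness of the polarized fibres produces a bounded birational model $\overline X$ of $X$ together with a contraction $\overline f\colon\overline X\to\overline Z$ compatible with $f$. The original $X$ is then recovered as the ample model of an appropriately lifted pair on $\overline X$, with $\overline B,\overline A$ denoting the birational transforms of $B,A$.

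At this final stage the polynomial $\sigma$ plays the decisive role: it fixes $\vol(K_X+B+tA)$ for all $t\in[0,1]$, and combined with the boundedness of $\overline X$, the finite set of coefficients available for the non-klt components of the lifted boundary, and the bounded intersection data for the lifted polarization, this rigidity forces the ample model $X$ to belong to a bounded family by an HMX-style argument for pairs of general type with fixed volume in a bounded ambient family. The principal obstacle is the absence of a direct lc analogue of Theorem \ref{t-bnd-gen-pairs-vol=v}: in the klt case (Theorem \ref{t-bnd-stable-mmodels-klt}) that theorem bounds $(Z,B_Z+M_Z)$ on the nose, whereas here the non-klt centres of $(X,B)$ obstruct such a direct argument. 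Fixing the entire polynomial $\sigma$ rather than merely the upper bound $\vol(K_X+B+A)<w$ used in the klt case supplies the additional rigidity needed to track the lc structure through the descent to $\overline Z$ and through the subsequent ample-model construction, and making this control uniform across the family is the delicate technical point.
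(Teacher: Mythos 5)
Your outline diverges from the paper's proof in a way that leaves a genuine gap at its centre. The step ``combining boundedness of $\overline Z$ with boundedness of the polarized fibres produces a bounded birational model $\overline X$ of $X$ together with a contraction $\overline f\colon\overline X\to\overline Z$ compatible with $f$'' is unjustified and is essentially the hard content of any such statement: boundedness of the base and of the general fibres does not bound the total space of a fibration (one must also control the fibration itself, e.g.\ the variation of the fibres and the gluing data), and nothing in Theorem \ref{t-descent-nef-divs-to-bnd-models} or [\ref{B-pol-var}] supplies this. Your final paragraph then defers the conclusion to ``an HMX-style argument'' whose mechanism you do not identify, and you concede the uniformity is ``the delicate technical point'' --- which is precisely the point that needs proving. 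The role you assign to $\sigma$ (``tracking the lc structure through the descent to $\overline Z$'') is also not how the polynomial is actually exploited.

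The paper never tries to bound $X$ by assembling it from $Z$ and the fibres. The Iitaka fibration enters only through Lemma \ref{l-disc-stable-mmodels}, which uses the adjunction $K_X+B\sim_q f^*(K_Z+B_Z+M_Z)$ together with Theorem \ref{t-disc-for-F-glc} to show that the log discrepancies of $(X,B)$ lying in $[0,1]$ belong to a fixed finite set. From there the argument works directly on $X$ with the ample divisor $K_X+B+A$: one passes to a crepant model $(Y,B_Y)$ with $(Y,0)$ $\epsilon$-lc whose exceptional divisors all have coefficient $1$ in $B_Y$, applies [\ref{B-pol-var}, Theorem 4.2] to get that $|m(K_X+B+A)|$ defines a birational map for bounded $m$, and then uses [\ref{B-compl}, Proposition 4.4] with a member $M\in|m(K_X+B+A)|$ of volume $m^d\sigma(1)$ to obtain log birational boundedness of $(X,B+A+M)$. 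The decisive move you are missing is the next one: on the bounded model the coefficients of the pullback $\overline A$ are bounded above, no coefficient of $\overline B$ lies in $(b,1)$ (by the finiteness of log discrepancies), and $\overline A$ avoids the coefficient-one part of $\overline B$; hence there is a \emph{fixed} $\lambda\in(0,1)$ with $(X,B+\lambda A)$ lc and $K_X+B+\lambda A$ ample of volume exactly $\sigma(\lambda)$. Boundedness then follows from [\ref{HMX3}] applied to the stable pair $(X,B+\lambda A)$. This is also where $\sigma$ is used essentially (before that only the single value $\sigma(1)$ is needed), whereas your proposal never pins down a fixed volume for an ample lc divisor on $X$ itself.
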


There seems to be very little related results in the literature except for stable 
log minimal models $(X,B),A$ when $A=0$ [\ref{HMX3}] 
or when $K_X+B\sim_\Q 0$ [\ref{B-pol-var}].

\vspace{0.5cm}
{\textbf{\sffamily{Applications to moduli.}}}
Recall that lc KSBA-stable pairs of general type are just stable log minimal models $(X,B),A$ with $A=0$.
Fixing appropriate invariants, projective coarse moduli spaces exist for slc KSBA-stable pairs; see  
Koll\'ar [\ref{kollar-moduli-gen-type}] and the references therein. 
Also recall that lc stable log Calabi-Yau pairs are just  
stable log minimal models $(X,B),A$ with $K_X+B\sim_\Q 0$. 
Again fixing appropriate invariants, projective coarse moduli spaces exist for 
stable log Calabi-Yau pairs; see Birkar [\ref{B-pol-var}] and the references therein. 

On the other hand, Viehweg used geometric invariant theory to construct 
quasi-projective coarse moduli spaces for 
polarised smooth good minimal models $X,A$ where $X$ is a smooth projective variety with semi-ample 
$K_X$ and $A$ is an ample Cartier divisor considered up to linear equivalence 
(so $A$ is not a fixed Weil divisor); see [\ref{Viehweg}] and the references therein. 
So the moduli space parametrises $X,\mathcal{O}(A)$. But this comes at a price: in general 
we cannot compactify the moduli space in a meaningful way because a family of 
such $X,\mathcal{O}(A)$ over a punctured curve does not have a unique limit. This is one of the main reasons  
$A$ is assumed to be an effective Weil divisor in the definition of stable minimal models. 
  
To get a finite type moduli space, Viehweg fixes a polynomial 
$g\in \Q[x,y]$ among other things, and then considers the family of polarised smooth good minimal models $X,A$ with 
$$
g(m,n)=\mathcal{X}(mK_X+nA)
$$
for $m,n\in \N$ where $\mathcal{X}$ stands for the Euler characteristic. In particular, $g$ determines the 
polynomial $\sigma\in\Q[s]$ satisfying $\sigma(t):=\vol(K_X+tA)$ for $t\in[0,1]$. Indeed, 
when  $t\in(0,1]$ is a fixed rational number, for sufficiently divisible $m\in\N$ we have  
$$
h^0(m(K_X+tA))=\mathcal{X}(m(K_X+tA))=g(m,mt)
$$ 
by the Kodaira vanishing theorem, so 
$$
\sigma(t)=\limsup_{m\mapsto \infty} \frac{d!h^0(m(K_X+tA))}{m^d}=\limsup_{m\mapsto \infty} \frac{d!g(m,mt)}{m^d}.
$$ 
This explains the role of $\sigma$ in the definition of stable log minimal models above.

In a forthcoming work we will treat moduli spaces of stable slc log minimal models where 
Theorem \ref{t-bnd-stable-mmodels-lc} plays a key role.

%\vspace{0.5cm}
%{\textbf{\sffamily{Sketch of some proofs.}}}

%\vspace{0.5cm}
%{\textbf{\sffamily{Plan of the paper.}}}

\vspace{0.5cm}
{\textbf{\sffamily{Acknowledgements.}}}
This work was supported by a grant of the Royal Society.

%%%%%%%%%%%%%%%%%%%%
%%%%%%%%%%%%%%%%%%%%%
\section{\bf Preliminaries}

All the varieties in this paper are quasi-projective over a fixed algebraically closed field of characteristic zero
unless stated otherwise. The set of natural numbers $\N$ is the set of positive integers, so it does not contain $0$.

\subsection{Contractions}
In this paper a \emph{contraction} refers to a projective morphism $f\colon X\to Y$ of varieties 
such that $f_*\mathcal{O}_X=\mathcal{O}_Y$ ($f$ is not necessarily birational). In particular, $f$ has connected fibres and 
if $X\to Z\to Y$ is the Stein factorisation of $f$, then $Z\to Y$ is an isomorphism.

\subsection{Divisors}
Let $X$ be a normal variety, and let $M$ be an $\R$-divisor on $X$. 
We denote the coefficient of a prime divisor $D$ in $M$ by $\mu_DM$. 
    
We say $M$ is \emph{b-Cartier} if it is $\R$-Cartier and if there is a birational contraction 
$\phi\colon W\to X$ from a normal variety such that $\phi^*M$  is Cartier.    
    
Now let $f\colon X\to Z$ be a morphism to a normal variety. We say $M$ is \emph{horizontal} over $Z$ 
if the induced map $\Supp M\to Z$ is dominant, otherwise we say $M$ is \emph{vertical} over $Z$. If $N$ is an $\R$-Cartier 
divisor on $Z$, we sometimes denote $f^*N$ by $N|_X$. 

Again let $f\colon X\to Z$ be a morphism to a normal variety, and let $M$ and $L$ be $\R$-Cartier divisors on $X$. 
We say $M\sim L$ over $Z$ (resp. $M\sim_\Q L$ over $Z$)(resp. $M\sim_\R L$ over $Z$) if there is a Cartier  
(resp. $\Q$-Cartier)(resp. $\R$-Cartier) divisor $N$ on $Z$ such that $M-L\sim f^*N$  
(resp. $M-L\sim_\Q f^*N$)(resp. $M-L\sim_\R f^*N$). For a point $z\in Z$, we say $M\sim L$ over $z$ if   
$M\sim L$ over $Z$ perhaps after shrinking $Z$ around $z$. The properties $M\sim_\Q L$ and $M\sim_\R L$ over $z$ 
are similarly defined.

For a birational map $X\bir X'$ (resp. $X\bir X''$)(resp. $X\bir X'''$)(resp. $X\bir Y$) 
whose inverse does not contract divisors, and for 
an $\R$-divisor $M$ on $X$ we usually denote the pushdown of $M$ to $X'$ (resp. $X''$)(resp. $X'''$)(resp. $Y$) 
by $M'$ (resp. $M''$)(resp. $M'''$)(resp. $M_Y$).

\subsection{b-divisors}\label{ss-b-divisor}

We recall some definitions regarding b-divisors. 
Let $X$ be a normal variety. A \emph{b-divisor} $\mathbf{M}$ over $X$ is a collection of $\R$-divisors 
$M_Y$ on $Y$ for each birational contraction $Y\to X$ from a normal variety that are compatible 
with respect to pushdown, 
that is, if $Y'\to X$ is another birational contraction and $\psi\colon Y'\bir Y$ is a moprhism, 
then $\psi_*M_{Y'}=M_Y$. 
 
A b-divisor $\mathbf{M}$ is \emph{b-$\R$-Cartier} if there is a birational contraction $Y\to X$ 
such that $M_Y$ is $\R$-Cartier and such that for any birational contraction 
$\psi\colon Y'\to Y/X$ we have $M_{Y'}=\psi^*M_Y$. 
In other words, a b-$\R$-Cartier b-divisor over $X$ is determined by the choice of a birational contraction  
$Y\to X$ and an $\R$-Cartier $\R$-divisor $M$ on $Y$. But this choice is not unique,  
that is, another birational contraction $Y'\to X$ and an $\R$-Cartier $\R$-divisor
$M'$ on $Y'$ defines the same b-$\R$-Cartier b-divisor if there is a common resolution $W\to Y$ and $W\to Y'$ 
on which the pullbacks of $M$ and $M'$ coincide.  

A b-$\R$-Cartier  b-divisor  represented by some $Y\to X$ and $M$ is \emph{b-Cartier} if  $M$ is 
b-Cartier, i.e. its pullback to some resolution is Cartier.

\subsection{Pairs}
In this paper a \emph{sub-pair} $(X,B)$ consists of a normal quasi-projective variety $X$ and an $\R$-divisor 
$B$ such that $K_X+B$ is $\R$-Cartier. 
If the coefficients of $B$ are at most $1$ we say $B$ is a 
\emph{sub-boundary}, and if in addition $B\ge 0$, 
we say $B$ is a \emph{boundary}. A sub-pair $(X,B)$ is called a \emph{pair} if $B\ge 0$ (we allow coefficients 
of $B$ to be larger than $1$ for practical reasons).

Let $\phi\colon W\to X$ be a log resolution of a sub-pair $(X,B)$. Let $K_W+B_W$ be the 
pulback of $K_X+B$. The \emph{log discrepancy} of a prime divisor $D$ on $W$ with respect to $(X,B)$ 
is $1-\mu_DB_W$ and it is denoted by $a(D,X,B)$.
We say $(X,B)$ is \emph{sub-lc} (resp. \emph{sub-klt})(resp. \emph{sub-$\epsilon$-lc}) 
if $a(D,X,B)$ is $\ge 0$ (resp. $>0$)(resp. $\ge \epsilon$) for every $D$. When $(X,B)$ 
is a pair we remove the sub and say the pair is lc, etc. Note that if $(X,B)$ is an lc pair, then 
the coefficients of $B$ necessarily belong to $[0,1]$. Also if $(X,B)$ is $\epsilon$-lc, then 
automatically $\epsilon\le 1$ because $a(D,X,B)=1$ for most $D$. 

Let $(X,B)$ be a sub-pair. A \emph{non-klt place} of $(X,B)$ is a prime divisor $D$ on 
birational models of $X$ such that $a(D,X,B)\le 0$. A \emph{non-klt centre} is the image on 
$X$ of a non-klt place. When $(X,B)$ is lc, a non-klt centre is also called an 
\emph{lc centre}.

\subsection{Minimal model program (MMP)}\label{ss-MMP} 
We will use standard results of the minimal model program (cf. [\ref{kollar-mori}][\ref{BCHM}]). 
Assume $(X,B)$ is a pair and $X\to Z$ is a projective morphism. 
 Assume $H$ is an ample$/Z$ $\R$-divisor and that $K_X+B+H$ is nef$/Z$. Suppose $(X,B)$ is klt or 
that it is $\Q$-factorial dlt. We can run an MMP$/Z$ on $K_X+B$ with scaling of $H$. If 
$(X,B)$ is klt and if either $K_X+B$ or $B$ is big$/Z$, then the MMP terminates [\ref{BCHM}]. If $(X,B)$ 
is $\Q$-factorial dlt, then in general we do not know whether the MMP terminates but 
we know that in some step of the MMP we reach a model $Y$ on which $K_Y+B_Y$, 
the pushdown of $K_X+B$, is a limit of movable$/Z$ $\R$-divisors: indeed, if the MMP terminates, then 
the claim is obvious; otherwise the MMP produces an infinite sequence $X_i\bir X_{i+1}$ 
of flips and a decreasing sequence $\alpha_i$ of numbers in $(0,1]$ such that 
$K_{X_i}+B_i+\alpha_iH_i$ is nef$/Z$; by [\ref{BCHM}][\ref{B-lc-flips}, Theorem 1.9], $\lim\alpha_i=0$; 
in particular, if $Y:=X_1$, then $K_Y+B_Y$ is the limit of the movable$/Z$ $\R$-divisors 
$K_Y+B_Y+\alpha_i H_Y$.

\subsection{Generalised pairs}\label{ss-gpp}
For the basic theory of generalised pairs see [\ref{BZh}, Section 4].
Below we recall some of the main notions and discuss some basic properties.\

(1)
A \emph{generalised pair} consists of 
\begin{itemize}
\item a normal variety $X$ equipped with a projective
morphism $X\to Z$, 

\item an $\R$-divisor $B\ge 0$ on $X$, and 

\item a b-$\R$-Cartier  b-divisor over $X$ represented 
by some birational contraction $X' \overset{\phi}\to X$ and $\R$-Cartier divisor
$M'$ on $X'$
\end{itemize}
such that $M'$ is nef$/Z$ and $K_{X}+B+M$ is $\R$-Cartier,
where $M := \phi_*M'$. 

We usually refer to the pair by saying $(X,B+M)$ is a  generalised pair with 
data $X'\overset{\phi}\to X\to Z$ and $M'$. Since a b-$\R$-Cartier b-divisor is defined birationally, 
in practice we will often replace $X'$ with a resolution and replace $M'$ with its pullback.
When $Z$ is not relevant we usually drop it
 and do not mention it: in this case one can just assume $X\to Z$ is the identity. 
When $Z$ is a point we also drop it but say the pair is projective. 

Now we define generalised singularities.
Replacing $X'$ we can assume $\phi$ is a log resolution of $(X,B)$. We can write 
$$
K_{X'}+B'+M'=\phi^*(K_{X}+B+M)
$$
for some uniquely determined $B'$. For a prime divisor $D$ on $X'$ the \emph{generalised log discrepancy} 
$a(D,X,B+M)$ is defined to be $1-\mu_DB'$. 

We say $(X,B+M)$ is 
\emph{generalised lc} (resp. \emph{generalised klt})(resp. \emph{generalised $\epsilon$-lc}) 
if for each $D$ the generalised log discrepancy $a(D,X,B+M)$ is $\ge 0$ (resp. $>0$)(resp. $\ge \epsilon$).
A \emph{generalised non-klt place} of $(X,B+M)$ is a prime divisor 
$D$ on birational models of $X$ with $a(D,X,B+M)\le 0$, and 
a \emph{generalised non-klt centre} of $(X,B+M)$ is the image of a generalised non-klt place.
The \emph{generalised non-klt locus}  of the generalised pair is the union of all 
the generalised non-klt centres.  

We will also use similar definitions when $B$ is not necessarily effective in which case we have a 
generalised sub-pair. 

(2)
Let $(X,B+M)$ be a generalised pair as in (1).
We say $(X,B+M)$ is \emph{generalised dlt} if it is generalised lc and if 
$\eta$ is the generic point of any generalised non-klt centre of 
$(X,B+M)$, then $(X,B)$ is log smooth near $\eta$ and $M'=\phi^*M$  holds over a neighbourhood of $\eta$.  
Note that when $M'=0$, then $(X,B)$ is {generalised dlt} iff it is dlt in the usual sense. 

The generalised dlt property is preserved under the MMP [\ref{B-compl}, 2.13(2)]. 

(3)
Let $(X,B+M)$ be a generalised pair as in (1) and let $\psi\colon X''\to X$ be a projective birational 
morphism from a normal variety. Replacing $\phi$ we can assume $\phi$ factors through 
$\psi$. We then let $B''$ and $M''$ be the pushdowns of 
$B'$ and $M'$ on $X''$ respectively. In particular,  
$$
K_{X''}+B''+M''=\psi^*(K_{X}+B+M).
$$
If $B''\ge 0$, then $(X'',B''+M'')$ is also a generalised pair 
with data $X'\to X''\to Z$ and $M'$. 

Assume that we can write $B''=\Delta''+G''$ where $(X'',\Delta''+M'')$ is $\Q$-factorial generalised dlt, 
$G''\ge 0$ is supported in $\rddown{\Delta''}$, and 
every exceptional prime divisor of $\psi$ is a component of $\rddown{\Delta''}$. Then we say 
$(X'',\Delta''+M'')$ is a \emph{$\Q$-factorial generalised dlt model} of $(X,B+M)$. Such models exist by 
[\ref{B-non-klt}, Lemma 2.8] (also see [\ref{HMX2}, Proposition 3.3.1] and [\ref{BZh}, Lemma 4.5]). 
If $(X,B+M)$ is generalised lc, then $G''=0$.

(4)

\begin{lem}\label{l-comapring-log-disc}
Assume $(X,B+M)$ is a generalised pair with data $X'\overset{\phi}\to X$ and $M'$ and assume $K_X+B$ is $\R$-Cartier. 
Write $\phi^*M=M'+\sum e_iE_i$ where $E_i$ run through the exceptional divisors of $\phi$. Then for each $i$ we have 
$$
a(E_i,X,B)=a(X,B+M)+e_i.
$$
\end{lem}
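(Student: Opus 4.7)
The plan is a direct computation from the definitions, unpacking the two pullback formulas and comparing coefficients at each exceptional divisor $E_i$.

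First, I would replace $X'$ by a log resolution of $(X,B)$ so that we can simultaneously work with the usual log discrepancies $a(E_i,X,B)$ and the generalised log discrepancies $a(E_i,X,B+M)$ on the same model. Write
$$
K_{X'}+B'_{\rm u}=\phi^*(K_X+B)\qquad\text{and}\qquad K_{X'}+B'_{\rm g}+M'=\phi^*(K_X+B+M),
$$
which makes sense because $K_X+B$ is $\R$-Cartier by hypothesis and $K_X+B+M$ is $\R$-Cartier by the definition of a generalised pair. By definition $a(E_i,X,B)=1-\mu_{E_i}B'_{\rm u}$ and $a(E_i,X,B+M)=1-\mu_{E_i}B'_{\rm g}$.

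Next I would subtract the two pullback identities. Since $\phi^*(K_X+B+M)=\phi^*(K_X+B)+\phi^*M$, we get
$$
B'_{\rm g}+M'=B'_{\rm u}+\phi^*M=B'_{\rm u}+M'+\sum e_iE_i,
$$
where the last equality uses the given decomposition $\phi^*M=M'+\sum e_iE_i$. Cancelling $M'$ yields $B'_{\rm g}=B'_{\rm u}+\sum e_iE_i$.

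Finally I would read off the coefficient at $E_i$: $\mu_{E_i}B'_{\rm g}=\mu_{E_i}B'_{\rm u}+e_i$, hence
$$
a(E_i,X,B+M)=1-\mu_{E_i}B'_{\rm g}=\bigl(1-\mu_{E_i}B'_{\rm u}\bigr)-e_i=a(E_i,X,B)-e_i,
$$
which rearranges to the claimed identity. There is no real obstacle here; the only small care point is ensuring $\phi$ is a model on which both $K_X+B$ and $K_X+B+M$ pull back compatibly with $M'$, which is handled by passing to a sufficiently high log resolution at the outset, and noting that log discrepancies are independent of the chosen model.
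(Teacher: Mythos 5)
Your proof is correct and is essentially identical to the paper's own argument: write both pullback identities, subtract to get $B'_{\rm g}=B'_{\rm u}+\sum e_iE_i$, and compare coefficients at $E_i$. (The lemma's statement has a typo, $a(X,B+M)$ should read $a(E_i,X,B+M)$, and your derivation correctly establishes the intended identity.)
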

\begin{proof}
Write 
$$
K_{X'}+C'=\phi^*(K_{X}+B)
$$
and 
$$
K_{X'}+B'+M'=\phi^*(K_{X}+B+M).
$$
Clearly $B'=C'+\sum e_iE_i$. Thus 
$$
a(E_i,X,B+M)=1-\mu_{E_i}B'=1-\mu_{E_i}C'-e_i=a(E_i,X,B)-e_i.
$$

\end{proof}

\subsection{Generalised lc thresholds}

Let $(X,B+M)$ be a generalised pair with data $X' \overset{\phi}\to X$ and $M'$.
Assume that $D$ on $X$ is an effective $\R$-divisor and that $N'$ on $X'$ is an $\R$-divisor which is
nef$/X$ and that $D+N$ is $\R$-Cartier where $N=\phi_*N'$.
The \emph{generalised lc threshold} of $D+N$ with respect to $(X,B+M)$ 
is defined as
$$
\sup \{s\in\R \mid \mbox{$(X,B+sD+M+sN)$ is generalised lc}\}
$$
where the pair in the definition has boundary part $B+sD$ and nef part $M'+sN'$.

The next lemma says that the generalised lc threshold is bounded from below away from zero under some 
boundedness assumptions.

\begin{lem}\label{l-bnd-glct-e-lc}
Let $d,r\in \N$ and $\epsilon\in \R^{>0}$ . Then there exists $t\in \R^{>0}$ depending only on $d,r,\epsilon$ 
satisfying the following. Assume that 
\begin{itemize}
\item $(X,B+M)$ is a projective generalised $\epsilon$-lc pair of dimension $d$ with 
data $X'\overset{\phi}\to X$ and $M'$, 

\item $A$ is a very ample divisor on $X$ with $A^d\le r$, 

\item $N'$ is a nef $\R$-divisor on $X'$ and $D$ is an effective $\R$-divisor on $X$,

\item $D+N$ is $\R$-Cartier where $N=\phi_* N'$, and

\item $A-(B+M+N+D)$ is pseudo-effective.
\end{itemize}
Then 
$$
(X,B+tD+M+tN)
$$
 is generalised klt with nef part $M'+tN'$.
\end{lem}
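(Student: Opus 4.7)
The aim is to find a positive lower bound, depending only on $d,r,\epsilon$, for the generalised lc threshold
\[
s_0 := \sup\{\, s \ge 0 \mid (X, B+sD+M+sN) \text{ is generalised lc}\,\}
\]
of $D+N$ with respect to $(X,B+M)$. Any $t<s_0$ then yields a generalised lc pair, and the $\epsilon$-lc hypothesis makes all its generalised log discrepancies strictly positive, producing generalised klt.

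First I would extract the numerical input. Intersecting the pseudo-effective class $A-(B+M+N+D)$ with $A^{d-1}$ gives $(B+M+N+D)\cdot A^{d-1}\le A^d\le r$. By the projection formula, $M\cdot A^{d-1}=M'\cdot (\phi^{*}A)^{d-1}\ge 0$, since $M'$ is nef and $\phi^{*}A$ is semi-ample, and likewise $N\cdot A^{d-1}\ge 0$, so each of $B,M,N,D$ has $A$-degree at most $r$. Moreover, $A$ very ample with $A^{d}\le r$ places the couples $(X,A)$ in a bounded family depending only on $d,r$ by standard Hilbert scheme arguments.

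The threshold bound is then proved by compactness and contradiction. Suppose the lemma fails; then we get a sequence of data $(X_{i},B_{i}+M_{i})$, $X'_{i}\to X_{i}$, $M'_{i}$, $A_{i}$, $D_{i}$, $N'_{i}$ satisfying the hypotheses but with $s_0^{(i)}\to 0$. After a subsequence and a base change, the $(X_{i},A_{i})$ become fibres of a single projective family $\pi\colon\mathcal{X}\to S$ with fibrewise very ample $\mathcal{A}$. Taking a simultaneous log resolution of $(\mathcal{X},\mathcal{B}+\mathcal{D})$ over $S$ and passing to a common birational model dominating all the $X'_{i}$, the bounded $\mathcal{A}$-degrees put the numerical classes of the $M'_{i}, N'_{i}$ in a bounded region of the relative nef cone, and the divisors $B_{i},D_{i}$ in a bounded subset of the relative Chow scheme. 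Extracting convergent subsequences produces in the limit a generalised pair $(X_{\infty},B_{\infty}+M_{\infty})$ with auxiliary $A_{\infty},D_{\infty},N'_{\infty}$ still satisfying the hypotheses; by lower-semicontinuity of generalised log discrepancies under flat specialisation it is still generalised $\epsilon$-lc, yet its generalised lc threshold of $D_{\infty}+N_{\infty}$ is $0$. This contradicts the positivity of the generalised lc threshold for any single generalised $\epsilon$-lc pair against any fixed $\R$-Cartier perturbation of the given type, which is immediate on one log resolution because only finitely many ratios $a(E,X,B+M)/v_{E}(P')$, with $P'=\phi^{*}(D+N)-N'$, contribute to $s_{0}$.

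The main obstacle is the simultaneous control of the nef parts $M'_{i}$ and $N'_{i}$ on the varying birational models $X'_{i}\to X_{i}$: one has to pass to a common simultaneous log resolution inside $\pi$ and invoke compactness of the bounded portion of the relative nef cone to extract a convergent subsequence. Once this is set up, the specialisation argument and the contradiction go through routinely.
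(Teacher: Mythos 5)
There is a genuine gap, and it sits exactly where the real difficulty of the lemma lies: the nef parts. Your plan is to put the $M_i'$ and $N_i'$ into "a bounded region of the relative nef cone" after "passing to a common birational model dominating all the $X_i'$". But the models $X_i'$ are arbitrary birational models of the $X_i$ and do not form a bounded family; for infinitely many of them there is no common dominating model inside a fixed family $\mathcal{X}\to S$, and the nef cones you want to compactify live on these unbounded varieties. Controlling a nef b-divisor that does not descend to the bounded model is precisely the hard problem this paper is about (cf.\ Theorems \ref{t-descent-nef-divs-to-bnd-models-1}, \ref{t-descent-nef-divs-to-bnd-models}), so it cannot be absorbed into a routine compactness step. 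The paper's proof sidesteps it: by the negativity lemma $\phi^*(D+N)=D'+N'$ with $D'\ge 0$; one then picks a small ample $G$ on $X$, writes $\phi^*G\sim_\Q H'+P'$ with $H'$ ample, chooses a general $0\le T'\sim_\R N'+H'$, and uses $D'\le \phi^*\phi_*(D'+T'+P')$ to replace $D+N$ by the single effective divisor $\phi_*(D'+T'+P')$ on $X$ itself, with the pseudo-effectivity of $A-(\cdots)$ preserved after doubling $A$. The same trick applied to $M'$ reduces everything to a usual pair on $X$, after which [B-BAB, Theorem 1.6] gives the threshold bound. Your proposal has no substitute for this reduction.

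The second problem is the limiting argument itself. Over an algebraically closed field "extracting convergent subsequences" in a Chow scheme has no meaning; the honest version is Noetherian induction plus simultaneous log resolution over a stratification, and then the "limit" is a generic point, not a special one. Your two key assertions — that the limit is still generalised $\epsilon$-lc and that its threshold is $0$ — pull the semicontinuity of log discrepancies in opposite directions, and neither follows from $s_0^{(i)}\to 0$ without further input. What is true (and is essentially the content of [B-BAB, Theorem 1.6] in the form the paper needs) is that for a log bounded family with degrees bounded by $r$, a simultaneous log resolution computes all the relevant thresholds via finitely many linear data, giving a uniform positive lower bound directly rather than by contradiction. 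So even in the case $M'=N'=0$ your argument needs to be restructured along those lines; and with the nef parts present it does not get off the ground.
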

\begin{proof}
Since $N'$ is nef, by the negativity lemma, $\phi^*(D+N)=D'+N'$ where $D'\ge 0$. 
Pick an ample $\Q$-divisor $G$ on $X$ and let $G'=\phi^*G$. 
Write $G'\sim_Q H'+P'$ where $H'$ is ample and $P'\ge 0$. 
Pick a general $\R$-divisor 
$$
0\le T' \sim_\R N'+H'.
$$ 
 Since 
$$
D'+T'+P'\sim_\R D'+N'+H'+P'\sim_\R 0/X,
$$
we have 
$$
D'\le D'+T'+P'=\phi^*\phi_*(D'+T'+P').
$$
Moreover, picking $G$ small enough and replacing $A$ with $2A$ we can assume 
$$
A-B-M-\phi_*(D'+T'+P')\sim_\R A-(B+M+D+N+G)
$$
is pseudo-effective. 
Thus replacing $N'$ with $0$ and replacing $D$ with $\phi_*(D'+T'+P')$ we can assume $N'=0$.

Applying a similar argument reduces the problem to the case $M'=0$. Indeed, 
write
$$
K_{X'}+B'+M'=\phi^*(K_X+B+M).
$$
Pick a general $\R$-divisor 
$$
0\le S' \sim_\R M'+H'
$$ 
and let $S=\phi_* S'$ and $P=\phi_*P'$ where $H',P'$ are as above. 
Then we can assume $(X,B+S+P)$ is $\frac{\epsilon}{2}$-lc because 
$$
K_{X'}+B'+S'+P'\sim_\R K_{X'}+B'+M'+H'+P'\sim_\R 0/X
$$
which means
$$
K_{X'}+B'+S'+P'= \phi^*(K_X+B+S+P)
$$
so $(X,B+S+P)$ is $\frac{\epsilon}{2}$-lc assuming $P'$ is sufficiently small. 
If 
$$
(X,B+S+P+tD)
$$ 
is klt for some $t>0$, then $(X,B+tD+M)$ is generalised klt. 
Therefore, replacing $B$ with $B+S+P$ we can assume $M'=0$.

Now by [\ref{B-BAB}, Theorem 1.6], there is $t>0$ depending only on $d,r,\epsilon$ such that 
$$
(X,B+tD)
$$
is klt. 

\end{proof}

\subsection{MMP for generalised pairs}

Given a generalised lc pair $(X,B+M)$ with data $X'\to X\to Z$ and $M'$, we can run the MMP on 
$K_X+B+M$ over $Z$ with scaling of an ample divisor if $(X,C)$ is klt for some boundary $C$, 
for example, this is the case when $(X,B+M)$ is $\Q$-factorial generalised dlt 
or when it is generalised klt. However, we do not know in general if the MMP terminates. 
But the MMP terminates in the generalised klt case when 
$K_X+B+M$ or $B+M$ is big over $Z$. 
For more on MMP on generalised pairs, see [\ref{BZh}, Lemma 4.4].

\begin{lem}\label{l-mmp-gen-scaling}
Let $(X,B+M)$ be a generalised klt pair with data $X'\to X\to Z$ and $M'$. 
Assume that $K_X+B$ is big over $Z$, $K_X+B+M$ is nef over $Z$, and $M$ is $\R$-Cartier.  
Then we can run the MMP on $K_X+B$ over $Z$ with scaling of $M$ and it terminates. 
\end{lem}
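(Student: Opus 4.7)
The plan is to reduce the assertion to the termination of MMPs for klt pairs with a big boundary, which is covered by BCHM. I would proceed in three steps: verify that the usual pair $(X,B)$ is klt, observe that the MMP on $K_X+B/Z$ with scaling of $M$ is well-defined, and reduce termination to the big-boundary case via a perturbation of $B$.

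First, I would show that $(X,B)$ is klt. Take $\phi\colon X'\to X$ to be a common log resolution of $(X,B+M)$ and $(X,B)$, and write $\phi^*M=M'+\sum e_iE_i$ on the exceptional divisors of $\phi$. Since $M'-\phi^*M$ is nef over $X$ and has zero pushforward, the negativity lemma yields $e_i\ge 0$; combined with Lemma \ref{l-comapring-log-disc}, this gives $a(E_i,X,B)=a(E_i,X,B+M)+e_i>0$, and for a prime divisor $D$ on $X$ one reads off $a(D,X,B)=a(D,X,B+M)>0$ from its strict transform. Hence $(X,B)$ is klt. Since $K_X+B+M$ is nef$/Z$, setting $\lambda_1=1$ initiates the MMP on $K_X+B/Z$ with scaling of $M$, and the standard klt machinery supplies the extremal contractions and flips at each step.

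For termination I would reduce to a klt pair with big boundary. Using that $K_X+B$ is big$/Z$, Kodaira's lemma gives $K_X+B\sim_{\R,Z}A+E$ with $A$ ample$/Z$ and $E\ge 0$. For small $\epsilon>0$ and general $A'\sim_{\R,Z}A$, the divisor $\Delta:=B+\epsilon(A'+E)$ is a big$/Z$ boundary with $(X,\Delta)$ klt, and
$$
K_X+\Delta\sim_{\R,Z}(1+\epsilon)(K_X+B).
$$
Therefore a curve $C$ contracted over $Z$ is $(K_X+B)$-negative iff it is $(K_X+\Delta)$-negative, and an extremal ray trivial against $K_X+B+\mu M$ is the same as one trivial against $K_X+\Delta+(1+\epsilon)\mu M$. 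So each step of our MMP coincides with a step of an MMP on $K_X+\Delta/Z$ with scaling of $M$ (the scaling parameter being rescaled by $1+\epsilon$), and this relation is preserved by flips and divisorial contractions. Since $(X,\Delta)$ is klt with $\Delta$ big$/Z$, every sequence of $(K_X+\Delta)$-flips terminates by BCHM, so the original MMP terminates.

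The main obstacle is that $M$ is neither ample nor effective in general, so the scaling termination recorded in Section \ref{ss-MMP}, which requires an ample$/Z$ scaling divisor, does not apply directly; the reduction via the auxiliary big boundary $\Delta$ is what lets us fall back on the unconditional BCHM termination for klt pairs with big boundary.
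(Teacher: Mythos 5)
Your first two steps are fine, and you have correctly located the difficulty: the scaling divisor $M$ is neither ample nor effective. But your resolution of it does not work. The final appeal to BCHM --- ``since $(X,\Delta)$ is klt with $\Delta$ big$/Z$, every sequence of $(K_X+\Delta)$-flips terminates'' --- is not a theorem. What [\ref{BCHM}] proves is termination of the MMP \emph{with scaling of an effective divisor} $C\ge 0$ such that $(X,\Delta+C)$ is klt and $K_X+\Delta+C$ is nef$/Z$; termination of arbitrary sequences of klt flips, even with big boundary and of log general type, is open in higher dimension, because the finiteness-of-models argument needs the models along the way to be weak lc models of honest klt pairs $(X,\Delta+\lambda C)$ lying in a suitable polytope of boundaries. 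After your perturbation of $B$ to $\Delta$, the sequence of steps you produce is still only an MMP with scaling of $M$ (rescaled), and $\Delta+\lambda M$ is in general not an effective klt boundary, so the BCHM termination-with-scaling machinery does not apply to it. Making the boundary big does not convert the scaling divisor into an admissible one.

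The missing idea --- and the one the paper's proof uses --- is to make the \emph{scaling divisor} effective rather than the boundary big. Write $K_X+B\sim_\R D+A$ with $D\ge 0$ and $A$ ample$/Z$; for small $t>0$ the divisor $tA+(1+t)M$ is the pushdown of the nef and big$/Z$ divisor $tA'+(1+t)M'$, so perturbing it one finds a general effective $L\sim_\R tD+tA+(1+t)M$ with $(X,B+L)$ klt and with $K_X+B+L\sim_\R (1+t)(K_X+B+M)$ nef$/Z$. Since $L\sim_\R t(K_X+B)+(1+t)M$, one has $K_X+B+\lambda L\sim_\R (1+\lambda t)\bigl(K_X+B+\tfrac{\lambda(1+t)}{1+\lambda t}M\bigr)$, so the MMP on $K_X+B$ over $Z$ with scaling of $L$ coincides, after reparametrising the scaling coefficient, with the MMP with scaling of $M$; and now BCHM's termination with scaling applies verbatim. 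Note that the bigness of $K_X+B$ is what allows you to absorb $M$ into a nef and big$/Z$ divisor and hence replace the scaling divisor by an effective one; you used the bigness only to fatten the boundary, which is not where it is needed.
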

\begin{proof}
Since $K_X+B$ is big over $Z$, we can write $K_X+B\sim_\R D+A$ where $D\ge 0$ and $A\ge 0$ is ample 
over $Z$. Assume $t>0$ is sufficiently small. Then 
$$
K_X+B+tD+tA+(1+t)M
$$
is nef over $Z$ and 
$$
(X,B+tD+tA+(1+t)M)
$$
is generalised klt with nef part $tA'+(1+t)M'$ where $A'$ on $X'$ is the pullback of $A$. 

 It is enough to run an MMP on 
$$
K_X+B+tD+tA+(1+t)M
$$ 
over $Z$ with scaling of $tD+tA+(1+t)M$ as it induces an MMP on $K_X+B$ with scaling of $M$. 
Perturbing the nef and big$/Z$ divisor $A'+(1+t)M'$ we can find 
$$
0\le L\sim_\R tD+tA+(1+t)M
$$ 
so that $(X,B+L)$ is klt. Now the MMP on $K_X+B$ over $Z$ with scaling of $L$ terminates by [\ref{BCHM}].

\end{proof}

\subsection{Toroidal pairs}\label{ss-toroidal-pairs}

We recall a few remarks about toroidal pairs. 
A pair $(Y,\Delta)$ is \emph{toroidal} if for each closed point $y\in Y$ there is a toric pair $(Y',\Delta')$ 
and a closed point $y'\in Y'$ such that $(Y,\Delta)$ and $(Y',\Delta')$ are formally isomorphic near $y,y'$. In particular, $\Delta$ is reduced, 
$(Y,\Delta)$ is lc, $Y$ is smooth outside $\Delta$, and 
$K_Y+\Delta$ is Cartier (cf. [\ref{Eisenbud}, Exercise 7.7] for the Cartier 
property; in our setting in this paper this will be obvious). For the theory of toroidal pairs see [\ref{KKMS}].
A toroidal pair $(Y,\Delta=\sum_1^n S_i)$ is \emph{strict} if the $S_i$ are normal. 
In this case, $\bigcap_{i\in I} S_i$ is normal and $\bigcap_{i\in I} S_i\setminus \bigcup_{i\notin I} S_i $ 
is smooth for each subset $I\subseteq \{1,\dots,n\}$ [\ref{KKMS}, page 57]. 
The irreducible components of such $\bigcap_{i\in I} S_i$ are the strata of $(Y,\Delta)$, hence are normal.

Let $(Y,\Delta)$ be a strictly toroidal pair. 
The non-klt centres of this pair are exactly its strata. Assume 
that $0\le C\le \Delta$, $K_Y+C$ is $\Q$-Cartier, and $V$ is a non-klt centre of $(Y,C)$. Then there is an 
adjunction formula $K_V+C_V=(K_Y+C)|_V$: indeed, $V$ is also a non-klt centre of $(Y,\Delta)$, hence 
it is an irreducible component of $\bigcap_{i\in I} S_i$ 
for some $I$; since $\bigcap_{i\in I} S_i$ is normal, shrinking $Y$ near $V$ we can assume $V=\bigcap_{i\in I} S_i$; 
also $\sum_{i\in I}S_i\le \rddown{C}$; picking $i\in I$, we have divisorial adjunctions
$K_{S_i}+C_{S_i}=(K_Y+C)|_{S_i}$ and $K_{S_i}+\Delta_{S_i}=(K_Y+\Delta)|_{S_i}$. Now $({S_i},\Delta_{S_i})$ 
is strictly toroidal, so repeating the argument (or applying induction) proves the claim.
 
In the previous paragraph, if the coefficients of $C$ belong to a 
DCC set $\Phi$, then the coefficients of $C_V$ belong to a DCC set $\Psi$ depending only on $\Phi$ 
[\ref{Shokurov-log-flips}, Corollary 3.10][\ref{BZh}, Proposition 4.9].

%%%%%%%%%%
\subsection{Volume of divisors}

Recall that the volume of an $\R$-divisor $L$ on a normal projective variety $X$ of dimension $d$ is defined 
as 
$$
\vol(L)=\limsup_{m\to \infty} \frac{h^0({mL})}{m^d/d!}. 
$$

\begin{lem}\label{l-vol-ample-div-increase}
Let $\phi\colon W\to X$ be a birational morphism between normal projective varieties. Assume $H$ is an 
ample $\R$-divisor on $X$ and that $G$ is an $\R$-divisor on $W$ that is either 
\begin{itemize}
\item effective and not exceptional over $X$, or
 
\item nef but not numerically trivial.   
\end{itemize}
Then $\vol(\phi^*H+G)>\vol(H)$.
\end{lem}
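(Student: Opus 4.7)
The plan is to separate the two cases. Both hinge on the identity $\vol(\phi^*H)=\vol(H)$, which follows because pullback under a birational morphism preserves the volume of a big divisor. The class $\phi^*H$ is nef and big on $W$, and the crucial point in both cases will be strict positivity of the intersection number $(\phi^*H)^{d-1}\cdot G$.

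In the effective case, where $G\ge 0$ is not $\phi$-exceptional, I would combine the monotonicity of $\vol$ on effective additions with the differentiability of the volume function on the big cone (Boucksom--Favre--Jonsson). For $t\in[0,1]$ the relation $\phi^*H+G \ge \phi^*H+tG$ together with monotonicity gives $\vol(\phi^*H+G)\ge\vol(\phi^*H+tG)$, and since $\phi^*H$ is nef and big, the right derivative of $t\mapsto\vol(\phi^*H+tG)$ at $t=0$ equals $d\,(\phi^*H)^{d-1}\cdot G$. By the projection formula this equals $d\,H^{d-1}\cdot\phi_*G$. Since $G$ is not $\phi$-exceptional, $\phi_*G$ is a nonzero effective divisor on $X$, so pairing with the ample class $H^{d-1}$ is strictly positive. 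Hence $\vol(\phi^*H+tG)>\vol(H)$ for small $t>0$, and the strict inequality propagates to $t=1$ by monotonicity.

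In the nef case, $\phi^*H+G$ is nef (sum of two nef classes) and big (big plus nef), so its volume equals the top self-intersection. Expanding $(\phi^*H+G)^d$ by the binomial formula and discarding the nonnegative cross terms that carry two or more factors of the nef class $G$ yields
\[
\vol(\phi^*H+G)\;\ge\;(\phi^*H)^d+d\,(\phi^*H)^{d-1}\cdot G\;=\;\vol(H)+d\,(\phi^*H)^{d-1}\cdot G,
\]
so the task again reduces to showing $(\phi^*H)^{d-1}\cdot G>0$, which by the projection formula equals $H^{d-1}\cdot\phi_*G$.

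The main obstacle is the last step in the nef case, since here $G$ is not assumed effective and we cannot simply read off effectivity of $\phi_*G$. My plan is as follows: (i) $\phi_*G$ is pseudo-effective on $X$ because $G$ nef is pseudo-effective (nef is the closure of ample, hence of effective) and pushforward preserves pseudo-effectivity; (ii) $\phi_*G$ is nonzero in $N^1(X)$, for otherwise the numerical class of $G$ is supported on $\phi$-exceptional divisors, and the negativity of the intersection matrix of exceptional divisors over $X$ would force $G\equiv 0$, contradicting the hypothesis; (iii) for ample $H$ on $X$, the curve class $H^{d-1}$ lies in the interior of the movable cone of curves and therefore pairs strictly positively with any nonzero pseudo-effective divisor class on $X$, by Boucksom--Demailly--P\u{a}un--Peternell duality (applied on a resolution if $X$ is only normal). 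Combining these three points yields $H^{d-1}\cdot\phi_*G>0$ and completes the argument.
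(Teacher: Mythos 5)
Your argument is correct and is essentially the paper's: both cases reduce to the positivity of $(\phi^*H)^{d-1}\cdot G$, obtained in the effective case from the derivative of the volume function at the nef and big class $\phi^*H$ (the paper cites Lazarsfeld--Musta\c{t}\u{a} via restricted volumes, you cite Boucksom--Favre--Jonsson; it is the same computation, and your use of monotonicity in place of the paper's contradiction is a cosmetic difference) and in the nef case from expanding the top self-intersection. One small caution on your step (ii): since $X$ is only normal, $\phi_*G$ need not be $\R$-Cartier (so ``$G$ is numerically supported on exceptional divisors'' is not immediate) and ``negativity of the intersection matrix'' is a surface-specific phrasing, so it is cleaner to avoid pushing forward altogether --- write $\phi^*H\equiv A+E$ with $A$ ample and $E\ge 0$ on $W$ by Kodaira's lemma, deduce $(\phi^*H)^{d-1}\cdot G\ge A^{d-1}\cdot G$ from nefness of $G$ and $\phi^*H$, and then run your BDPP (or a Hodge-index) argument on $W$ itself to see that $A^{d-1}\cdot G=0$ would force $G\equiv 0$.
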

\begin{proof}
Assume $\vol(\phi^*H+G)=\vol(H)$. First suppose $G$ is nef but not numerically trivial. 
Then 
$$
\vol(\phi^*H+G)=(\phi^*H+G)^{d}\ge \vol(H)+\phi^*H\cdot G>\vol(H),
$$
where $d=\dim X$, a contradiction. 

Now assume $G$ is effective but not exceptional and let $D$ be a component of $G$ not contracted over $X$. 
We will argue as in the proof of [\ref{HMX3}, Lemma 2.2.2].
If $g$ is the coefficient of $D$ in $G$, then for $t\in [0,g]$ we have 
$$
\vol(H)\le v(t):=\vol(\phi^*H+tD)\le \vol(\phi^*H+G),
$$
so $v(t)$ is a constant function on $[0,g]$. But then by [\ref{Laz-Mustata}, 4.25(iii)] we have 
$$
0=v'(0)=d\vol_{W|D}(\phi^*H)\ge \phi^*H\cdot D\neq 0
$$
where $v'(t)$ denotes the differentiation of $v(t)$, and $\vol_{W|D}$ denotes restricted volume. 
This is not possible, a contradiction.

\end{proof}

%%%%%%%%%%
\subsection{Iitaka volumes}\label{ss-Iitaka-volumes}
Let $L$ be a $\Q$-divisor on a normal projective variety $X$ with Kodaira dimension $\kappa(L)$. 
Following [\ref{Z-Li}], define the \emph{Iitaka volume} of $L$ to be 
$$
\Ivol(L):=\limsup_{m \mapsto \infty}  \frac{ h^0(mL)}{m^{\kappa(L)}/\kappa(L)!}
$$
when $\kappa(L)\ge 0$, and define it to be $0$ when $\kappa(L)= -\infty$. 
When $\kappa(L)= 0$, by convention we let 
$\kappa(L)!=1$, so in this case $\Ivol(L)=1$. 

If $f\colon X\to Z$ is a contraction and $L\sim_\Q f^*A$ for some big $\Q$-divisor $A$, then 
$\Ivol(L)=\vol(A)$.

%%%%%%%%
\subsection{Rational maps}
The following lemma is similar to [\ref{HMX3}, Lemma 2.2.2].

\begin{lem}\label{l-rat-map-to-lc-model}
Assume that 
\begin{itemize}
\item $(X,B+M)$ is a projective generalised lc pair with data $X'\overset{\phi}\to X$ and $M'$, 

\item $K_X+B+M$ is ample, 

\item $(Y,B_Y+M_Y)$ is a projective generalised lc pair with data $X'\overset{\psi}\to Y$ and $M'$, 

\item $\rho\colon Y\bir X$ is a birational map, 

\item that 
$$
\vol(K_Y+B_Y+M_Y)=\vol(K_X+B+M),
$$

\item and 
$$
a(D,X,B+M)\ge a(D,Y,B_Y+M_Y)
$$ 
for any prime divisor $D$ over $X$.
\end{itemize}
Then $\rho^{-1}$ does not contract any divisor and $(X,B+M)$ is the generalised lc model of $(Y,B_Y+M_Y)$.
\end{lem}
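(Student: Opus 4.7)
The plan is to pass to a common log resolution of $X$ and $Y$, translate the discrepancy hypothesis into an inequality of Weil divisors on the resolution, combine the volume hypothesis with Lemma \ref{l-vol-ample-div-increase} to make the resulting difference divisor exceptional over $X$, and finally rule out divisors contracted by $\rho^{-1}$ by an intersection argument using ampleness of $K_X+B+M$.

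First I would replace $X'$ by a sufficiently high log resolution that dominates both $X$ and $Y$, giving birational morphisms $\phi\colon X'\to X$ and $\psi\colon X'\to Y$ with $\rho\circ\psi=\phi$ as rational maps and with $M'$ the chosen nef representative on $X'$. Writing
$$
K_{X'}+B'+M'=\phi^*(K_X+B+M), \qquad K_{X'}+B_Y'+M'=\psi^*(K_Y+B_Y+M_Y),
$$
the hypothesis $a(D,X,B+M)\ge a(D,Y,B_Y+M_Y)$ for every prime $D$ over $X$ becomes $B'\le B_Y'$, so $N:=B_Y'-B'\ge 0$ and
$$
\psi^*(K_Y+B_Y+M_Y)=\phi^*(K_X+B+M)+N.
$$
Taking volumes and invoking the assumed equality $\vol(K_X+B+M)=\vol(K_Y+B_Y+M_Y)$ gives $\vol(\phi^*(K_X+B+M)+N)=\vol(K_X+B+M)$. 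Since $K_X+B+M$ is ample and $N\ge 0$, Lemma \ref{l-vol-ample-div-increase} (first bullet) forces every component of $N$ to be $\phi$-exceptional.

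The main obstacle is then to rule out prime divisors contracted by $\rho^{-1}$. Suppose for contradiction that a prime divisor $E\subset X$ is contracted, and let $E'\subset X'$ be its strict transform; then $E'$ is $\phi$-non-exceptional (mapping birationally to $E$) but $\psi$-exceptional. Since $N$ is $\phi$-exceptional, $E'\not\subset\Supp N$, so $E'\cap\Supp N$ is a proper closed subset of $E'$; the locus where $\phi|_{E'}$ fails to be an isomorphism is likewise a proper closed subset. As $\psi|_{E'}\colon E'\to\psi(E')$ has positive-dimensional generic fibres, I select a curve $C\subset E'$ inside a general fibre of $\psi|_{E'}$ that is not contained in $\Supp N$ and not contracted by $\phi$. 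Then $\psi_*C=0$ while $\phi_*C$ is a positive multiple of the curve $\phi(C)\subset E$, and by the projection formula
$$
\psi^*(K_Y+B_Y+M_Y)\cdot C=0, \qquad \phi^*(K_X+B+M)\cdot C=(K_X+B+M)\cdot\phi_*C>0,
$$
the latter by ampleness of $K_X+B+M$. Intersecting the displayed equation above with $C$ gives $N\cdot C<0$, contradicting $N\ge 0$ combined with $C\not\subset\Supp N$. Hence $\rho^{-1}$ contracts no divisor.

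Finally, every prime divisor on $X$ now has a strict transform on $X'$ that is both $\phi$- and $\psi$-non-exceptional, where $N$ has coefficient zero; comparing coefficients gives $B=\rho_*B_Y$, and $M=\rho_*M_Y$ since both are pushforwards of $M'$. Together with ampleness of $K_X+B+M$ and the discrepancy hypothesis, this exhibits $(X,B+M)$ as the generalised lc model of $(Y,B_Y+M_Y)$. The essential subtlety is that the volume argument alone controls only the $\phi$-exceptional part of $N$; excluding divisors contracted by $\rho^{-1}$ requires the intersection-theoretic leverage from ampleness of $K_X+B+M$ in the preceding paragraph.
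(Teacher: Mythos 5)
Your proposal is correct and follows the same route as the paper: compare the two pullbacks on a common resolution, use the discrepancy hypothesis to get $N\ge 0$, and apply Lemma \ref{l-vol-ample-div-increase} together with the volume equality to force $N$ to be $\phi$-exceptional. The only difference is that you explicitly write out the standard curve/projection-formula argument ruling out divisors contracted by $\rho^{-1}$, a step the paper leaves implicit in the assertion that being the generalised lc model already entails this; your version of that step is also correct.
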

\begin{proof}
Generalised lc models are analogues of lc models for usual pairs. Here 
$(X,B+M)$ being the generalised lc model of $(Y,B_Y+M_Y)$ means that $X$ is the ample model 
of $K_Y+B_Y+M_Y$ and that $K_X+B+M$ is the pushdown of $K_Y+B_Y+M_Y$.

By the last assumption, 
$$
P':=\psi^*(K_Y+B_Y+M_Y)-\phi^*(K_X+B+M)
$$
is effective. Applying Lemma \ref{l-vol-ample-div-increase}, we see that 
$P'$ is exceptional over $X$, hence $(X,B+M)$ is indeed the generalised lc model of $(Y,B_Y+M_Y)$, 
so $\rho^{-1}$ does not contract any divisor.

\end{proof}

%%%%%%%%%%

\subsection{Adjunction for fibre spaces}

Let $(X,B)$ be a projective sub-pair and let $f\colon X\to Z$ be a contraction with $\dim Z>0$ 
such that $(X,B)$ is sub-lc near the 
generic fibre of $f$, and $K_X+B\sim_\R 0/Z$. 
Below we recall a construction giving 
a kind of canonical bundle formula based on [\ref{kaw-subadjuntion}] which we refer 
to as \emph{adjunction for fibre spaces}. The klt case is done in [\ref{ambro-adj}] and the 
lc case is done in [\ref{FG-lc-trivial}] by reducing it to the klt case. See also [\ref{FM}] for another 
variant.

For each prime divisor $D$ on $Z$ we let 
$t_D$ be the lc threshold of $f^*D$ with respect to $(X,B)$ over the generic point of $D$, that is, 
$t_D$ is the largest number so that $(X,B+t_Df^*D)$ is sub-lc over the generic point of $D$. 
Next let $b_D=1-t_D$, and then define $B_Z=\sum b_DD$ where the sum runs over all the 
prime divisors on $Z$. 

By assumption, $K_X+B\sim_\R f^*L_Z$ for some $\R$-Cartier 
$\R$-divisor $L_Z$ on $Z$. Letting $M_Z=L_Z-(K_Z+B_Z)$ we get 
$$
K_X+B\sim_\R f^*(K_Z+B_Z+M_Z).
$$
We call $B_Z$ the \emph{discriminant divisor} and ${M_Z}$ the \emph{moduli divisor} of adjunction. 
Obviously $B_Z$ is uniquely determined but $M_Z$  is determined only up to $\R$-linear equivalence 
because it depends on the choice of $L_Z$.

Now let $\phi\colon X'\to X$ and $\psi\colon Z'\to Z$ be birational morphisms from normal 
projective varieties and assume the induced map $f'\colon X'\bir Z'$ is a morphism. 
Let $K_{X'}+B'=\phi^*(K_X+B)$. Similar to above we can define a discriminant divisor $B_{Z'}$, and 
taking $L_{Z'}=\psi^*L_Z$ gives a moduli divisor $M_{Z'}$ so that 
$$
K_{X'}+B'\sim_\R f'^*(K_{Z'}+B_{Z'}+M_{Z'})
$$ 
and $B_Z=\psi_*B_{Z'}$ and $M_Z=\psi_*M_{Z'}$.

\begin{thm}
With the above notation and assumptions, suppose that $(X,B)$ is lc over the generic point 
of $Z$ and that $B$ is a $\Q$-divisor. If $Z'\to Z$ is a high resolution, then 
$M_{Z'}$ is nef and for any birational morphism $Z''\to Z'$ from a normal projective 
variety, $M_{Z''}$ is the pullback of $M_{Z'}$.
\end{thm}

For a proof, see [\ref{ambro-adj}][\ref{FG-lc-trivial}].

%%%%%%%%%%

\subsection{Bounded families}\label{ss-bnd-couples}
A \emph{couple} $(X,D)$ consists of a normal projective variety $X$ and a  divisor 
$D$ on $X$ whose non-zero coefficients are all equal to $1$, i.e. $D$ is a reduced divisor. 
The reason we call $(X,D)$ a couple rather than a pair is that we are concerned with 
$D$ rather than $K_X+D$ and we do not want to assume $K_X+D$ to be $\Q$-Cartier 
or with nice singularities. Two couples $(X,D)$ and $(X',D')$ are isomorphic if 
there is an isomorphism $X\to X'$ mapping $D$ onto $D'$.

We say that a set $\mathcal{P}$ of couples of dimension $\le d$ is \emph{bounded} 
if there is $r\in \N$ such that for each $(X,D)\in \mathcal{P}$ we can find a very 
ample divisor $A$ on $X$ so that $A^{\dim X}\le r$ and $D\cdot A^{\dim X-1}\le r$. This is  
equivalent to say that there exist 
finitely many projective morphisms $V^i\to T^i$ of varieties and reduced divisors $C^i$ on $V^i$ 
such that for each $(X,D)\in \mathcal{P}$ there exist an $i$, a closed point $t\in T^i$, and an 
isomorphism $\phi\colon V^i_t\bir X$ such that $(V^i_t,C^i_t)$ is a couple and 
$\phi_*C_t^i\ge D$.

A set $\mathcal{Q}$ of projective lc pairs $(X,B)$ is said to be bounded if the $(X,\Supp B)$ 
form a bounded family of couples.

Recall that in the introduction we defined boundedness for generalised pairs and 
for stable log minimal models, in Definitions \ref{d-d-Fi-g-pairs} and \ref{d-stabl-mmodels} 
when the coefficients involved are bounded from below away from zero.

%%%%%%%%%%%%%%%%%%%%%%%%%%%%%%%%%%%%
%%%%%%%%%%%%%%%%%%%%%%%%%%%%%%%%%%%%

\section{\bf Descent of nef divisors}

\subsection{Descent of divisors}\label{ss-descend-divs}
(1) Let $X,Y,Z$ be normal varieties and $X\bir Y/Z$ be a birational map, and let $M$ be an $\R$-Cartier $\R$-divisor on $X$. 
We say that $M$ descends to $Y$ if there exist an $\R$-Cartier $\R$-divisor $L$ on $Y$ and  
a commutative diagram of contractions of normal varieties  
$$
\xymatrix{
& W\ar[ld]_\phi\ar[rd]^\psi &\\
X \ar[rd] & & Y\ar[ld]\\
&Z&
}
$$
compatible with the given $X\bir Y$ such that $\phi^*M=\psi^*L$. We also refer to this situation by saying that 
$M$ descends to $Y$ as $L$.

(2) In some special situations nef divisors automatically descend. Assume $X\to Y/Z$ is a birational contraction of normal varieties 
and that $X$ is of Fano type over $Y$. Assume that $(X,B)$ is lc for some $B$ and that $M$ is a Cartier divisor 
which is nef over $Y$. We claim that if $K_X+B+uM$ is anti-nef over $Y$ for some $u>2d$, 
then $M$ descends to $Y$ as a Cartier divisor. 
First note that we can replace $Z$ with $Y$, hence assume $Y=Z$.
Second note that $-(K_X+B+uM)$ is semi-ample over $Y$ as $X$ is of Fano type over $Y$, so adding a 
general divisor 
$$
0\le L\sim_\R -(K_X+B+uM)/Y
$$ 
to $B$ it is enough to treat only the case $K_X+B+uM\equiv 0/Y$. Third since $X$ is of Fano type over $Y$, 
there is a boundary $C$ such that $(X,C)$ is klt and $K_X+C\equiv 0/Y$. Replacing 
$B$ with $tB+(1-t)C$ and $uM$ with $tuM$ for some $t<1$ close to $1$, we can assume that $(X,B)$ is klt.  

If $X\to Y$ is an isomorphism the claim is obvious. If not, then since $X$ is of Fano type over $Y$, 
there is an extremal contraction $X\to U/Y$. Assume $M$ is ample over $U$. Then $K_X+B$ is anti-ample 
over $U$. By boundedness of the length of extremal rays [\ref{kawamata-bnd-ext-ray}], 
there is a curve $\Gamma$ generating the extremal ray corresponding to $X\to U$ such that 
$$
2d<u\le uM\cdot \Gamma=-(K_X+B)\cdot \Gamma\le 2d,
$$ 
a contradiction. Thus $M\equiv 0/U$, hence $M$ descends to $U$ as a Cartier divisor by the cone theorem 
[\ref{kollar-mori}, Theorem 3.7], so we can replace $X$ with $U$ and repeat the process until we reach $Y$. 

(3)

\begin{lem}\label{l-t>3dp=descend}
Let $d,p\in \N$. Assume $(X,B+M)$ is a generalised pair of dimension $d$ 
with data $X'\overset{\phi}\to X$ and $M'$ where $pM'$ is b-Cartier. 
Assume that $(X,B)$ is klt and that $\lambda$ is the generalised lc threshold of $M$ with respect to $(X,B)$. 
Then the following are equivalent: 
\begin{itemize}
\item $\lambda\ge 3dp$, 

\item  $\lambda=\infty$,

\item $M'$ descends to $X$ and $pM$ is Cartier. 
\end{itemize} 
\end{lem}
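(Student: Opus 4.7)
I plan to establish the cyclic chain (iii)$\Rightarrow$(ii)$\Rightarrow$(i)$\Rightarrow$(iii). The implication (ii)$\Rightarrow$(i) is trivial since $\infty \geq 3dp$. For (iii)$\Rightarrow$(ii), the hypothesis $\phi^{*}M=M'$ substituted into $K_{X'}+B^{(s)}+sM'=\phi^{*}(K_X+B+sM)$ forces $B^{(s)}$ to equal the usual discrepancy divisor of $(X,B)$ for every $s\in\R$; since $(X,B)$ is klt, this sub-boundary has all coefficients strictly less than $1$, so $(X,B+sM)$ is generalised klt for every $s>0$ and $\lambda=\infty$.

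The substantive direction is (i)$\Rightarrow$(iii), which I plan to prove via a relative MMP argument in the spirit of part (2) of Section \ref{ss-descend-divs}. After replacing $X'$ by a log resolution of $(X,B)$ on which $pM'$ is Cartier, set $E:=\phi^{*}M-M'$. Since $\phi_{*}E=0$ and $-E$ is nef over $X$ (because $M'$ is nef and $\phi^{*}M$ is numerically trivial over $X$), the negativity lemma gives $E\ge 0$, exceptional over $X$. Applying Lemma \ref{l-comapring-log-disc} to the generalised pair $(X,B+sM)$ with nef part $sM'$ yields $a(E_i,X,B)=a(E_i,X,B+sM)+s\,e_i$ for each $\phi$-exceptional prime $E_i$ with coefficient $e_i$ in $E$; specialising to $s=3dp$ and invoking the generalised lc hypothesis gives the key bound $a(E_i,X,B)\ge 3dp\cdot e_i$. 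Next, I would choose a $\Q$-factorial klt boundary $\Delta'$ on $X'$ (for instance a perturbation of the positive part of $B^{(3dp)}$ adjusted by a small ample divisor) and run a $(K_{X'}+\Delta')$-MMP over $X$ with scaling of an ample divisor. This MMP terminates at a small $\Q$-factorial modification of $X$, contracting every $\phi$-exceptional divisor. At each extremal step $X_i\to U_i/X$ with extremal curve $\Gamma$ of length $-(K_{X_i}+\Delta_i)\cdot\Gamma\le 2d$, the pullback relation $K_{X_i}+B^{(3dp)}_i+3dp\,M'_i\equiv 0/X$ together with the comparison of $\Delta_i$ and $B^{(3dp)}_i$ yields $3dp\,M'_i\cdot\Gamma\le 2d$, hence $pM'_i\cdot\Gamma\le 2/3<1$. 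Because $pM'_i$ is Cartier and nef, inductively preserved across the MMP by descent along each extremal ray, this intersection is a non-negative integer and hence zero; the Cone Theorem then lets $pM'_i$ descend along the extremal contraction as a Cartier divisor. Iterating through the MMP shows that $pM'$ descends to $X$ as a Cartier divisor, so $E=0$ and $pM$ is Cartier.

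The hard part will be handling the comparison between $B^{(3dp)}_i$ and $\Delta_i$ on extremal curves that may traverse the anti-effective locus of $B^{(3dp)}_i$ (corresponding to $\phi$-exceptional divisors $E_i$ with $a(E_i,X,B)>1$). This is where the specific threshold $3dp$, rather than the naive $2dp$ suggested by length of extremal rays alone, becomes essential: the inequality $a(E_i,X,B)\ge 3dp\cdot e_i$ must be used both to ensure $B^{(3dp)}$ remains a sub-boundary and to absorb the correction terms in the comparison, keeping the bound $3dp\,M'_i\cdot\Gamma\le 2d$ valid at every step.
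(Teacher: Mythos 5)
Your reductions (iii)$\Rightarrow$(ii)$\Rightarrow$(i) are fine, as are the opening moves of (i)$\Rightarrow$(iii): $E=\phi^*M-M'\ge 0$ by negativity, and $a(E_j,X,B)\ge 3dp\,e_j$ via Lemma \ref{l-comapring-log-disc}. But the step you yourself flag as ``the hard part'' is a genuine gap, not a deferrable technicality. Because you run the MMP on $K_{X'}+\Delta'$ with $M'$ absent from the divisor being minimised, a contracted extremal ray $R=\R^{\ge 0}[\Gamma]$ is only known to be $(K_{X_i}+\Delta_i)$-negative, and in the identity
$$
3dp\,M_i'\cdot\Gamma=-(K_{X_i}+B_i^{(3dp)})\cdot\Gamma=-(K_{X_i}+\Delta_i)\cdot\Gamma+(\Delta_i-B_i^{(3dp)})\cdot\Gamma
$$
only the first summand is bounded (by $2d$, via Kawamata). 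The divisor $\Delta_i-B_i^{(3dp)}$ is exceptional with coefficients essentially $a(E_j,X,B)-3dp\,e_j$, which your key inequality shows are \emph{non-negative} but which can be arbitrarily large; since $\Gamma$ lies in the exceptional locus of $X_i\to X$, the term $(\Delta_i-B_i^{(3dp)})\cdot\Gamma$ has no a priori upper bound (e.g.\ $\Gamma\subset E_1$ with $E_2\cdot\Gamma\gg 0$). Your inequality controls the sign of this correction term, which is the wrong direction: you need an upper bound, not a lower one. So the claimed conclusion $pM_i'\cdot\Gamma\le 2/3$ is unsupported.

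The repair --- and the paper's proof --- is to put the nef part into the divisor being contracted: let $\Gamma'$ be the birational transform of $B$ plus the reduced exceptional divisor of $\phi$ and run the MMP on $K_{X'}+\Gamma'+3dp\,M'$ over $X$. Then every contracted ray satisfies $(K_{X_i}+\Gamma_i+3dp\,M_i')\cdot\Gamma<0$ by definition of the MMP, so $3dp\,M_i'\cdot\Gamma<-(K_{X_i}+\Gamma_i)\cdot\Gamma\le 2d$ follows directly from the length of extremal rays applied to $(X_i,\Gamma_i)$, with no comparison term; nefness and integrality of the Cartier divisor $pM_i'$ then force $pM_i'\cdot\Gamma=0$, and the cone theorem descends $pM_i'$ along the contraction (this is exactly \ref{ss-descend-divs}(2) with $u=3d>2d$). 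Termination with a $\Q$-factorialisation $X''$ of $X$ follows from $K_{X'}+\Gamma'+3dpM'=\phi^*(K_X+B+3dpM)+E'$ with $E'\ge 0$ exceptional together with the general negativity lemma, and one last application of \ref{ss-descend-divs}(2) to $X''\to X$ (where $K_{X''}+\Gamma''+3dpM''\sim_\R 0/X$) completes the descent to $X$ itself --- a final step from the $\Q$-factorialisation down to $X$ that your proposal also omits.
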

\begin{proof}
We can assume $\phi$ is a log resolution of $(X,B)$ and that $pM'$ is Cartier. 
Let $\Gamma'$ be the birational transform of $B$ plus the reduced exceptional divisor of $\phi$. 
Let $n=3dp$ and run an MMP on $K_{X'}+\Gamma'+n M'$ over $X$ with scaling of some ample divisor. 
Then by (2) above, $M'$ descends to every model appearing in the MMP. We reach a model $X''$ 
on which $K_{X''}+\Gamma''+nM''$ is a limit of movable $\R$-divisors, relatively over $X$. 

Assume that $\lambda \ge n$, i.e. $(X,B+n M)$ is generalised lc. Then we can write 
$$
K_{X'}+\Gamma'+nM'=\phi^*(K_X+B+nM)+E'
$$
where $E'\ge 0$ is exceptional. Thus by the general negativity lemma [\ref{B-lc-flips}, Lemma 3.3], 
the MMP terminates with a $\Q$-factorial dlt model $(X'',\Gamma''+nM'')$ of 
$(X,B+M)$. In particular, $K_{X''}+\Gamma''+nM''\sim_\R 0/X$. Moreover, $X''$ is of Fano type 
over $X$ because $(X,B)$ is klt.
Then $M''$ descends to $X$ by (2) above, hence $M'$ descends to $X$ and that $pM$ is Cartier.

Now assume that $M'$ descends to $X$. Then obviously $\lambda=\infty$. 

Finally if $\lambda=\infty$, then clearly $\lambda\ge n$.
 
\end{proof}

\subsection{Generalised lc modifications}

Let $(X,B+M)$ be a generalised pair with data $X'\to X$ and $M'$. 
Let $C$ be the divisor obtained from $B$ by replacing every coefficient 
$>1$ with $1$. Assume that $(X'',B''+M'')$ is a generalised pair 
with data $X'\to X''$ and $M'$ equipped with a birational contraction $X''\to X$. We say that 
$(X'',B''+M'')$ is the \emph{generalised lc modification} of $(X,B+M)$ if the following hold:
\begin{itemize}
\item $B''=C^\sim+E$ where $C^\sim$ is the birational 
transform of $C$ and $E$ is the reduced exceptional divisor of $X''\to X$,

\item $(X'',B''+M'')$ is generalised lc, and 

\item $K_{X''}+B''+M''$ is ample over $X$. 
\end{itemize}

The generalised lc modification is unique if it exists. 
It is also obvious that if $(X,B+M)$ is generalised lc then it is the generalised lc modification of itself.

\subsection{Bounded coefficients on generalised lc modifications}

In this subsection we show that under certain assumptions the pullback of some divisors to a generalised 
lc modification have bounded coefficients. This is important for the subsequent results in this section.

\begin{prop}\label{p-gen-lc-model}
Let $d,p,r\in \N$. Then there exists $c\in \N$ depending only on $d,p,r$ satisfying the following. 
Assume that 
\begin{itemize}
\item $(X,B+M)$ is a projective generalised pair of dimension $d$ with data $X'\overset{\phi}\to X$ and $M'$, 

\item $(X,B)$ is $\Q$-factorial klt,

\item $pB$ is integral and $pM'$ is b-Cartier,

\item $A$ is a very ample divisor on $X$ with $A^d\le r$, and 

\item $A-(B+M)$ is pseudo-effective.
\end{itemize}
Then there is a generalised lc modification $(X'',B''+3dpM'')$ of $(X,B+3dpM)$ such that 
\begin{itemize}
\item denoting $X''\to X$ by $\pi$ we have 
$$
\pi^*M=M''+\sum e_iE_i''
$$ 
where $E_i''$ are the exceptional divisors of $\pi$ and $\sum e_i\le c$,

\item $M'$ descends to $X''$ as $M''$, and 

\item $pM''$ is Cartier.
\end{itemize} 
\end{prop}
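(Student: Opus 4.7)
The plan is to construct $X''$ via an MMP over $X$ on a high log resolution, use Lemma \ref{l-t>3dp=descend} (after passing to a dlt model and perturbing to klt) to establish the descent of $M'$, and bound $\sum e_i$ by combining Lemma \ref{l-bnd-glct-e-lc} with a relative intersection-theoretic argument.

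First, I would take a log resolution $\phi \colon X' \to X$ of $(X, B)$ on which $pM'$ is Cartier, and let $\Gamma' = B^\sim + E'$ with $E'$ the reduced $\phi$-exceptional divisor. Since $(X', \Gamma')$ is $\Q$-factorial dlt and $(X,B)$ is klt, the MMP on $K_{X'} + \Gamma' + 3dpM'$ over $X$ with scaling of an ample divisor can be run; it terminates (using the klt hypothesis and standard termination results after a small ample perturbation that makes $\Gamma'$ big over $X$) at a model $\pi \colon X'' \to X$ on which $K_{X''} + B'' + 3dpM''$ is ample over $X$. By construction $B'' = B^\sim + E''$ with $E''$ the reduced $\pi$-exceptional divisor, so $(X'', B'' + 3dpM'')$ is a generalised lc modification of $(X, B + 3dpM)$.

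To verify that $M'$ descends to $X''$ with $pM''$ Cartier, I would pass to a $\Q$-factorial generalised dlt model $\rho \colon X''' \to X''$ of $(X'', B'' + 3dpM'')$. Since $pB$ is integral and $(X,B)$ is klt, perturbing $\rddown{\Delta'''}$ downward by a small amount produces a klt boundary $\Delta'''_\epsilon \leq \Delta'''$ for which $(X''', \Delta'''_\epsilon + 3dpM''')$ remains generalised lc; then the generalised lc threshold of $M'''$ with respect to $(X''', \Delta'''_\epsilon)$ is at least $3dp$, so by Lemma \ref{l-t>3dp=descend} it is $\infty$ and $pM'''$ is Cartier. To descend further to $X''$, I would argue as in Subsection \ref{ss-descend-divs}(2): on each $\rho$-exceptional extremal ray, boundedness of extremal ray lengths combined with Cartierness of $pM'''$ forces $M''' \equiv 0$, so the cone theorem descends $pM'''$ one step at a time to $X''$ as the Cartier divisor $pM''$.

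Finally, for the bound on $\sum e_i$, Lemma \ref{l-bnd-glct-e-lc} applied with $\epsilon = \tfrac{1}{p}$ (valid since $(X,B)$ is $\tfrac{1}{p}$-lc and $A - (B + M)$ is pseudo-effective with $A^d \le r$) yields $t = t(d,p,r) > 0$ such that $(X, B + tM)$ is generalised klt. By Lemma \ref{l-comapring-log-disc},
\[
a(E_i'', X, B + sM) = a(E_i'', X, B) - s \, e_i \quad \text{for } s \in \{t, 3dp\}.
\]
Each extracted $E_i''$ satisfies $a(E_i'', X, B + 3dpM) \leq 0$ while $a(E_i'', X, B + tM) > 0$, so subtracting and summing give
\[
(3dp - t)\sum e_i = \sum \bigl(a(E_i'', X, B + tM) - a(E_i'', X, B + 3dpM)\bigr).
\]
The main obstacle is bounding this sum, which I would do by relating it to $\vol(K_X + B + 3dpM)$: the pseudo-effectivity of $A - (B + M)$ and $A^d \le r$ bound this volume in terms of $d,p,r$ (using also BAB-type boundedness of $(X, B)$ from the very ample $A$ together with the $\tfrac{1}{p}$-lc condition), and then a relative intersection calculation — comparing the anti-effective $\pi$-ample exceptional divisor $F'' = \sum a(E_i'', X, B + 3dpM) E_i''$ with pullbacks of $A$ — converts this volume bound into a uniform bound on $\sum e_i$ by a constant $c = c(d,p,r)$.
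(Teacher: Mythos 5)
The construction of $X''$ via the MMP on $K_{X'}+\Gamma'+3dp\,M'$ over $X$ matches the paper, but your detour through a dlt model to descend $M'$ is both unnecessary and, in its last step, not justified as stated: descending $pM'''$ from the dlt model $X'''$ to $X''$ "one step at a time" presupposes a factorisation of $\rho$ into extremal contractions with the Fano type structure that \ref{ss-descend-divs}(2) requires, and a dlt modification does not come with that. The paper gets the descent for free: since $pM'$ is nef Cartier and $n=3dp$, any extremal ray $R$ contracted in the MMP is generated by a curve $\Gamma$ with $-(K_{X'}+\Gamma')\cdot\Gamma\le 2d$, so $3dp\,M'\cdot\Gamma<2d$, forcing $M'\cdot\Gamma=0$; hence the MMP is $M'$-trivial and $M'$ descends, with $pM'$ Cartier, to every model appearing in it, in particular to $X''$.

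The genuine gap is in the bound $\sum e_i\le c$, which is the whole point of the proposition. Your identity $(3dp-t)\sum e_i=\sum\bigl(a(E_i'',X,B+tM)-a(E_i'',X,B+3dpM)\bigr)$ is correct but useless here: the inequalities you actually have, namely $a(E_i'',X,B+tM)>0$ and $a(E_i'',X,B+3dpM)\le 0$, bound the right-hand side from \emph{below}, not above (note $a(E_i'',X,B+tM)=a(E_i'',X,B)-te_i$ and $a(E_i'',X,B)$ is a priori unbounded). The promised "relative intersection calculation" is missing its essential ingredient: a uniform lower bound $\alpha=\alpha(d,p)>0$ on the degree of each individual $E_i''$ against a fixed nef polarization; without it, a bounded degree for $\pi^*M\ge\sum e_iE_i''$ says nothing about $\sum e_i$, since the $E_i''$ could have arbitrarily small degree. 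The paper manufactures exactly this lower bound: it adds $\frac{1}{2}H'$ with $H'\in|6d\phi^*A|$ general to the boundary (which also makes the MMP automatically relative over $X$ and makes $K_{X''}+\Gamma''+nM''$ globally ample), restricts to each $E_i''\subset\rddown{\Gamma''}$ by generalised adjunction so that the coefficients lie in a DCC set depending only on $d,p$, and applies [\ref{BZh}, Theorem 1.3] to get $(K_{X''}+\Gamma''+nM'')^{d-1}\cdot E_i''\ge\alpha$; combined with $(K_{X''}+\Gamma''+nM'')^{d-1}\cdot\pi^*M\le(K_{X''}+\Gamma''+nM'')^{d-1}\cdot\pi^*A\le(l+1)^dr$ this yields $\alpha\sum e_i\le(l+1)^dr$. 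Nothing in your proposal supplies such a lower bound, so the conclusion does not follow.
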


\begin{proof}
Note that we are not assuming $(X,B+M)$ to be generalised lc.\\

\emph{Step 1.}
We can assume that $\phi$ is a log resolution of $(X,B)$ and that $pM'$ is Cartier. 
Let $\Delta'$ be the sum of the birational transform $B^\sim$ and the reduced exceptional divisor of $\phi$. 
Let $\Gamma'=\Delta'+\frac{1}{2}H'$ where $H'$ is a general element of $|6d\phi^*A|$. 
To ease notation put $n=3dp$. 

Run an MMP on $K_{X'}+\Gamma'+nM'$ with scaling of some ample divisor. Since 
$$
\Gamma'=\Delta'+\frac{1}{2}H'\sim_\Q \Delta'+3d\phi^*A,
$$ 
any extremal ray in the course of the MMP intersects the pullback of $A$ trivially, 
by boundedness of the length of extremal rays [\ref{kawamata-bnd-ext-ray}]. Thus the MMP 
is automatically an MMP over $X$. In addition, since $n=3dp$ and $pM'$ is Cartier, 
the MMP is $M'$-trivial, that is, in each step $M'$ intersects the contracted extremal ray trivially. 
In particular, by \ref{ss-descend-divs}(2), $pM'$ descends as a Cartier divisor 
to every model appearing in the MMP.\\

\emph{Step 2.}
In the course of the MMP 
we reach a model $X''$ on which $K_{X''}+\Gamma''+nM''$ is a limit of movable $\R$-divisors. In particular, 
for any exceptional$/X$ prime divisor $S''$ on $X''$, $(K_{X''}+\Gamma''+nM'')|_{S''}$ is pseudo-effective. 
Thus by the general negativity lemma [\ref{B-lc-flips}, Lemma 3.3], we can write 
$$
K_{X''}+\Gamma''+nM''+G''=\pi^*(K_X+\Gamma+nM)
$$
where $\pi$ denotes $X''\to X$, $\Gamma=\pi_*\Gamma''$, and $G''$ is effective and exceptional. 

Write 
$$
\pi^*M=M''+\sum e_iE_i''
$$ 
where $E_i''$ are the exceptional divisors of $\pi$. Since $M''$ is a nef divisor, $e_i\ge 0$.\\

\emph{Step 3.} 
We claim that $e_i>0$ for every $i$. Assume not, say $e_1=0$. Write $\phi^*M=M'+\sum f_jF_j$ where $F_j$ 
runs through the exceptional divisors of $\phi$. Say $F_1$ is the birational transform of $E_1$, then $f_1=0$. 
Since $(X,B)$ is klt and $\Gamma=B+\frac{1}{2}H$ where $H=\phi_*H'$ is a general member of $|6dA|$, 
we see that $(X,\Gamma)$ is klt, so we have 
$$
0<a(E_1,X,\Gamma)=a(F_1,X,\Gamma).
$$ 
On the other hand, by Lemma \ref{l-comapring-log-disc} and the condition $f_1=0$, 
$$
a(F_1,X,\Gamma)=a(F_1,X,\Gamma+nM)+nf_1=a(F_1,X,\Gamma+nM).
$$
In addition, since $G''\ge 0$ and since $E_1$ is a component of $\rddown{\Gamma''}$,
$$
a(F_1,X,\Gamma+nM)=a(E_1,X,\Gamma+nM)\le a(E_1,X'',\Gamma''+nM'')=0.
$$
To summarise we have proved
$$
0<a(E_1,X,\Gamma)=a(E_1,X,\Gamma+nM)\le a(E_1,X'',\Gamma''+nM'')=0
$$
which is a contradiction.\\ 
   
\emph{Step 4.}   
In this step we show that the MMP of Step 1 terminates with a good minimal model and then we replace 
$X''$ with the corresponding ample model. 
Assume $t>0$ is a sufficiently small real number. Then $\Gamma''-t(\sum e_iE_i'')\ge 0$. 
Moreover, the MMP from $X''$ onwards is also an MMP on 
$$
K_{X''}+\Gamma''-t(\sum e_iE_i'')+(n-t)M''= K_{X''}+\Gamma''+nM''-t\pi^*M
$$   
over $X$ with scaling of some divisor. Since all $e_i>0$, 
$$
\rddown{\Gamma''}=\Supp \sum e_iE_i''
$$ 
and since $M'$ descends to $X''$ as $M''$, 
$$
({X''},\Gamma''-t(\sum e_iE_i'')+(n-t)M'')
$$ 
is generalised klt, hence the MMP terminates over $X$: indeed, 
we can find a boundary $\Theta''$ 
such that over $X$ we have 
$$
\Theta''\equiv \Gamma''-t(\sum e_iE_i'')+(n-t)M''\equiv \Gamma''+nM''
$$
and $(X'',\Theta'')$ is klt, so the termination follows from [\ref{BCHM}].
Therefore, the MMP of Step 1 terminates globally because as we observed the MMP is over $X$. 
Moreover, the pushdown of $K_{X''}+\Gamma''+nM''$ to the minimal model is semi-ample over $X$. 
Replacing $X''$ with the end product of the MMP, we can assume $K_{X''}+\Gamma''+nM''$ is nef 
and that it is semi-ample over $X$. 

Let $X'''$ be the ample model of $K_{X''}+\Gamma''+nM''$ over $X$ which exists in view of the previous step. 
Since $K_{X''}+\Theta''\equiv 0/X'''$ and $(X'',\Theta'')$ is klt, $X''$ is of Fano type over $X'''$. 
Then by \ref{ss-descend-divs}(2), $pM''$ descends to $X'''$ as a nef Cartier divisor. 
Replacing $X''$ with $X'''$, we can assume $K_{X''}+\Gamma''+nM''$ is ample over $X$. 
By our choice of $H'$ and the definition of $\Gamma'$, in fact $K_{X''}+\Gamma''+nM''$ 
is globally ample. However, now $({X''},\Gamma''+nM'')$ may not be generalised dlt but it is 
generalised lc which is enough for our purposes. 

By construction, $(X'',B''+nM'')$ is the generalised lc modification of $(X,B+nM)$ where $B''$ 
is the sum of $B^\sim$ and the reduced exceptional divisor of $X''\to X$ which we again denote by $\pi$. 
Note that $B''$ is just the pushdown of $\Delta'$.
As before we will write 
$$
\pi^*M=M''+\sum e_iE_i''.
$$\

\emph{Step 5.}
In this step we show that the volume of the restriction of $K_{X''}+\Gamma''+nM''$ to each 
component of $\rddown{\Gamma''}$ is bounded from below away from zero. 
Pick a component $S''$ of $\rddown{\Gamma''}=\Supp \sum e_iE_i''$ and let $T$ be its 
normalisation. Then by [\ref{B-compl}, 3.1(1),(2)], we have a generalised ajdunction formula 
$$
K_T+\Gamma_T''+nM_T''\sim_\R (K_{X''}+\Gamma''+nM'')|_T
$$
where the coefficients of $\Gamma_T''$ belong to a DCC set depending only on $p$, and $nM_{T}''$ is 
Cartier where $nM_{T}''$ is the nef part of $(T,\Gamma_T''+nM_T'')$. 

Now by [\ref{BZh}, Theorem 1.3], 
there is $\alpha>0$ depending only on $d,p$ such that  
$$
\alpha<\vol(K_T+\Gamma_T''+nM_T'')=\vol((K_{X''}+\Gamma''+nM'')|_{S''})
$$
$$
=(K_{X''}+\Gamma''+nM'')^{d-1}\cdot S''.
$$\  

\emph{Step 6.}
Since $A$ is very ample and $A^d\le r$, $X$ belongs to a bounded family of varieties, 
so there is $q\in \N$ depending only on $d,r$ such that $qA-K_X$ is ample. Moreover,  
since $A-(B+M)$ is pseudo-effective, both $A-B$ and $A-M$ are pseudo-effective, hence 
letting $l=q+1+3d+n$, we see that 
$$
lA-(K_X+\Gamma+nM)=lA-(K_X+B+\frac{1}{2}H+nM)
$$
$$
=qA-K_X+A-B+3dA-\frac{1}{2}H+nA-nM
$$
is pseudo-effective. Therefore, 
$$
\vol(K_X+\Gamma+nM+A)\le \vol((l+1)A)\le (l+1)^dr.
$$
In view of Step 2, this implies that 
$$
\vol(K_{X''}+\Gamma''+nM''+\pi^*A)\le (l+1)^dr.
$$

Since both $K_{X''}+\Gamma''+nM''$ and $\pi^*A$ are nef, 
$$
(K_{X''}+\Gamma''+nM'')^{d-1}\cdot \pi^*A \le \vol(K_{X''}+\Gamma''+nM''+\pi^*A)\le (l+1)^dr.
$$ 
Thus since $A-M$ is pseudo-effective, we get 
$$
(K_{X''}+\Gamma''+nM'')^{d-1}\cdot \pi^*M\le (K_{X''}+\Gamma''+nM'')^{d-1}\cdot \pi^*A\le (l+1)^dr, 
$$ 
hence by nefness of $M''$ and the choice of $\alpha$ we have 
$$
\alpha(\sum e_i)\le(K_{X''}+\Gamma''+nM'')^{d-1}\cdot (\sum e_iE_i'') 
$$
$$
\le (K_{X''}+\Gamma''+nM'')^{d-1}\cdot \pi^*M \le (l+1)^dr. 
$$ 
Therefore, $\sum e_i$ is bounded from above by some fixed number $c\in \N$ depending only on 
$d,l,r,\alpha$, hence depending only on $d,p,r$.

\end{proof}

\subsection{Bounded crepant models}

Given a generalised pair $(X,B+M)$ with data $X'\overset{\phi}\to X$ and $M'$ 
and a birational contraction $\psi\colon Y\to X$ 
from a normal variety we can write 
$$
K_{Y}+B_Y+M_Y=\psi^*(K_X+B+M)
$$
in a natural way: we can assume $X'\bir Y$ is a morphism; we first write 
$$
K_{X'}+B'+M'=\phi^*(K_X+B+M)
$$
and then simply pushdown down $B'+M'$ to $Y$ to get $B_Y+M_Y$. 
When $B_Y\ge 0$ we say that $(Y,B_Y+M_Y)$ is a \emph{crepant model} of $(X,B+M)$. Note that 
$(Y,B_Y+M_Y)$ is a generalised pair with data $X'\to Y$ and $M'$.

When $(X,B+M)$ is projective generalised lc and $X$ varies in a bounded family, in general, 
crepant models do not form a bounded family even for fixed $X$, e.g. when $(X,B)$ is any projective toric 
pair of dimension $\ge 2$ and $M'=0$. The crucial point of the 
next result is that we can sometimes ensure that certain crepant models are bounded.

In contrast when $(X,B+M)$ is projective generalised $\epsilon$-lc ($\epsilon>0$) and $X$ varies in a bounded family 
and the ``degree" of $B+M$ is bounded from above, 
the crepant models are bounded by [\ref{B-lcyf}, Theorem 2.2]. This is an important ingredient of the 
proof of the next result.

\begin{prop}\label{p-bnd-model-by-g-lct}
Let $d,p,r\in \N$. Then there exists $s\in \N$ depending only on $d,p,r$ satisfying the following. 
Assume that 
\begin{itemize}
\item $(X,B+M)$ is a projective generalised pair of dimension $d$ with data $X'\overset{\phi}\to X$ and $M'$, 

\item $(X,B)$ is $\Q$-factorial klt,

\item $pB$ is integral and $pM'$ is b-Cartier,

\item $A$ is a very ample divisor on $X$ with $A^d\le r$,

\item $A-(B+M)$ is pseudo-effective, and 

\item $\lambda<\infty$ where $\lambda$ is the generalised lc threshold of $M$ with respect to $(X,B)$.
\end{itemize}
Then there exist a $\Q$-factorial crepant model $(Y,B_Y+\lambda M_Y)$ of $(X,B+\lambda M)$ and 
a very ample divisor $A_Y$ on $Y$ such that 
\begin{itemize}
\item the reduced exceptional divisor of $Y\to X$ is equal to $\rddown{B_Y}$,

\item the pair 
$$
(Y,B_Y-t\rddown{B_Y}+\lambda M_Y)
$$ 
is generalised klt for any small $t>0$,

\item $A_Y^d\le s$ and $A_Y-(B_Y+M_Y)$ is pseudo-effective.
\end{itemize} 
\end{prop}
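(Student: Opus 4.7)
The plan is to construct $Y$ by first applying Proposition \ref{p-gen-lc-model} to produce a bounded intermediate model $X''$ dominating $X$, and then contracting over $X$ those exceptional divisors of $X''\to X$ that are not generalised non-klt places of $(X,B+\lambda M)$. Since $\lambda<\infty$, Lemma \ref{l-t>3dp=descend} forces $\lambda<3dp$. Apply Proposition \ref{p-gen-lc-model} to obtain $\pi\colon X''\to X$ realising the generalised lc modification $(X'',B''+3dpM'')$ of $(X,B+3dpM)$ with $\pi^*M=M''+\sum e_iE_i''$, $\sum e_i\le c$, $M'$ descending to $X''$ as $M''$, and $pM''$ Cartier. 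The proof of Proposition \ref{p-gen-lc-model} also supplies a uniform bound on $\vol(K_{X''}+B''+3dpM''+\pi^*A)$, placing $X''$ in a bounded family and producing a very ample divisor $A_{X''}$ of bounded degree.

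By Lemma \ref{l-comapring-log-disc}, the crepant pullback of $(X,B+\lambda M)$ to $X''$ is $(X'',B_Y''+\lambda M'')$ with
\[
B_Y''=B^\sim+\sum(1-a_i+\lambda e_i)E_i'',\qquad a_i:=a(E_i'',X,B)>0.
\]
Since $(X,B+\lambda M)$ is generalised lc, each coefficient $1-a_i+\lambda e_i$ lies in $[0,1]$, and attains the value $1$ precisely when $a_i=\lambda e_i$, equivalently when $E_i''$ is a generalised non-klt place of $(X,B+\lambda M)$. Let $F'':=\sum_{a_i>\lambda e_i}E_i''$ collect the \emph{extra} exceptional divisors that must be contracted. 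Pass to a small $\Q$-factorialisation $X'''\to X''$ (no new divisors, so boundedness is preserved) and set $B''':=B^\sim+\sum E_i'''$. Then
\[
K_{X'''}+B'''+\lambda M'''\equiv \sum(a_i-\lambda e_i)E_i'''/X
\]
is effective and exceptional over $X$, supported exactly on $F'''$. Run an MMP over $X$ on $K_{X'''}+B'''+\lambda M'''$ with scaling of an ample divisor; since $X'''$ is of klt type (inherited from the klt type of $X$ via the modification, using a small klt perturbation of the reduced exceptional), the MMP is valid and by the negativity lemma it terminates with $F'''$ entirely contracted, producing $Y$.

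The resulting $(Y,B_Y+\lambda M_Y)$ is a $\Q$-factorial crepant model of $(X,B+\lambda M)$ whose exceptional divisors over $X$ are exactly the generalised non-klt places of $(X,B+\lambda M)$ and hence coincide with $\rddown{B_Y}$; since all other components of $B_Y$ have coefficients strictly less than $1$, the pair $(Y,B_Y-t\rddown{B_Y}+\lambda M_Y)$ is generalised klt for every $t>0$. For the very ample divisor $A_Y$ with $A_Y^d\le s$ and $A_Y-(B_Y+M_Y)$ pseudo-effective, transport the bounded very ample $A_{X''}$ through the small $\Q$-factorialisation and the MMP to $Y$, adjusting by a small ample perturbation to restore very ampleness; pseudo-effectivity follows from pulling back the pseudo-effective $A-(B+M)$ from $X$ via $\pi_Y$ and controlling the exceptional-discrepancy correction by means of $\sum e_i\le c$ and $\lambda<3dp$. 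The main technical obstacle is precisely this last step: intermediate models of an MMP do not in general vary in a bounded family, and extracting $A_Y$ with uniformly bounded degree across the family requires exploiting both the effective-exceptional nature of the MMP divisor and the fact that the entire construction takes place over the bounded base $X$.
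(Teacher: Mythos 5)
Your construction of $Y$ (apply Proposition \ref{p-gen-lc-model}, pass to a crepant pullback of $(X,B+\lambda M)$ on $X''$, then contract over $X$ the exceptional divisors $E_i''$ with $a_i>\lambda e_i$) is a reasonable variant of what the paper does — the paper instead stops an MMP on $K_{X''}+B''$ over $X$ with scaling of $nM''$ at the threshold where the scaling coefficient crosses $\lambda/n$, which lands on the same kind of model. But there is a genuine gap at exactly the point you flag as a ``technical obstacle'': you never actually prove that $Y$ carries a very ample divisor $A_Y$ with $A_Y^d\le s$ for $s$ depending only on $d,p,r$. Pushing a very ample divisor through an MMP and ``adjusting by a small ample perturbation'' does not control degrees, and your earlier claim that the volume bound on $K_{X''}+B''+nM''+\pi^*A$ already ``places $X''$ in a bounded family'' is also unjustified — a bound on the volume of one nef and big divisor does not bound the variety. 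This boundedness is the entire content of the proposition. The paper obtains it by a specific mechanism you are missing: using Lemma \ref{l-bnd-glct-e-lc} to bound $\lambda$ below away from $0$ and the bound $\sum e_i\le c$ to choose a fixed $u\in(0,\lambda)$ with $B_Y-u\sum g_jG_j\ge 0$, so that $(Y,B_Y-u\sum g_jG_j+(\lambda-u)M_Y)$ is a crepant model of $(X,B+(\lambda-u)M)$, which is generalised $\delta$-lc with $\delta=\epsilon u/n$ bounded below; then [\ref{B-lcyf}, Theorem 2.2] (boundedness of crepant models of $\epsilon$-lc generalised pairs over a bounded base) gives the bounded $A_Y$ with $A_Y-(B_Y+M_Y)$ pseudo-effective.

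Two smaller points. First, your justification of the second bullet (``since all other components of $B_Y$ have coefficients strictly less than $1$, the pair $(Y,B_Y-t\rddown{B_Y}+\lambda M_Y)$ is generalised klt'') is insufficient: you must control all generalised non-klt places of $(Y,B_Y+\lambda M_Y)$, not just the divisors on $Y$. One needs, as in the paper, that the generalised non-klt locus of $(X'',B''+\lambda M'')$ is exactly $\Supp\sum e_iE_i''$ (using that $M'$ descends to $X''$ and that $(X'',B''-t\sum e_iE_i''+(\lambda-t)M'')$ is generalised klt) and that $X''\bir Y$ only transforms, and does not create, generalised non-klt centres. Second, termination of your MMP on $K_{X'''}+B'''+\lambda M'''$ over $X$ needs an actual argument (the pair is generalised lc, not klt); this is fixable by perturbation as in Lemma \ref{l-mmp-gen-scaling}, but ``$X'''$ is of klt type'' should be substantiated.
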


\begin{proof}
\emph{Step 1.}
In this step we consider the generalised lc modification $(X'',B''+3dpM'')$  of $(X,B+3dpM)$.
By Proposition \ref{p-gen-lc-model}, there is $c\in \N$ 
depending only on $d,p,r$ such that, letting $n=3dp$, there is a generalised lc modification 
$(X'',B''+nM'')$  of $(X,B+nM)$ so that 
\begin{itemize}
\item denoting the given morphism $X''\to X$ by $\pi$ we have 
$$
\pi^*M=M''+\sum e_iE_i''
$$ 
where $E_i''$ are the exceptional divisors of $\pi$ and $\sum e_i\le c$,

\item $M'$ descends to $X''$ as $M''$, and 

\item $pM''$ is Cartier.
\end{itemize} 
 
Note that $\pi$ is not an isomorphism otherwise $M'$ descends to $X$ which means $\lambda=\infty$ which is 
not the case by assumption.\\

\emph{Step 2.}
In this step we replace $X''$ with a $\Q$-factorialisation and then 
show that we can run an MMP$/X$ on $K_{X''}+B''$ with scaling of $nM''$.
By the proof of \ref{p-gen-lc-model}, for any small $t>0$, 
$$
({X''},B''-t(\sum e_iE_i'')+(n-t)M'')
$$ 
is generalised klt. Moreover, 
$$
K_{X''}+B''-t(\sum e_iE_i'')+(n-t)M''
$$
$$
=K_X+B''+nM''-t\pi^*M
$$
$$
\equiv K_X+B''+nM''/X.
$$
In particular, there is a small $\Q$-factorialisation 
of $X''$. Replace $X''$ with this $\Q$-factorialisation. The nefness of $K_{X''}+B''+nM''$ over $X$ 
is preserved and the property $\sum e_i\le c$ is also preserved since we do not get any new exceptional divisor.  

By Lemma \ref{l-mmp-gen-scaling}, we can run the MMP$/X$ on $K_{X''}+B''-t(\sum e_iE_i'')$ with scaling of $(n-t)M''$. 
The reduced exceptional divisor of $X''\to X$ is equal to $\rddown{B''}$. Assuming $t$ 
is small enough, each exceptional divisor appears in $B''-t(\sum e_iE_i'')$ with coefficient 
close to $1$. But then since $(X,B)$ is klt and since $B''$ is the sum of the birational 
transform $B^\sim$ and the reduced exceptional divisor of $\pi$, we can assume that  
$$
K_{X''}+B''-t(\sum e_iE_i'')=\pi^*(K_X+B)+L''
$$
where $L''\ge 0$ is exceptional whose support is equal to the reduced exceptional divisor of $\pi$. 
Thus by the negativity lemma, the MMP contracts $L''$ and it ends with $X$ as $X$ is $\Q$-factorial.

Adding $t\pi^*M$ to $K_{X''}+B''-t(\sum e_iE_i'')$ we can see that 
the above MMP is also an MMP$/X$ on $K_{X''}+B''+tM''$ with scaling of $(n-t)M''$ which in turn implies that the MMP 
is also an MMP$/X$ on $K_{X''}+B''$ with scaling of $nM''$.\\ 

\emph{Step 3.}
In this step we define $(Y,B_Y+\lambda M_Y)$. 
Let 
$$
X''=X_1''\bir X_2'' \bir \cdots X_{l-1}''\to X_l''=X
$$ 
be the steps of the MMP on $K_{X''}+B''$ of the previous step. 
By definition of MMP with scaling, in each step we have a number $\alpha_i$ which is the smallest number so that 
$K_{X_i''}+B_i''+\alpha_i nM_i''$ is nef over $X$. The last step $X_{l-1}''\to X_l''=X$ of the MMP 
is an extremal divisorial contraction. Moreover, 
$$
K_{X_{l-1}''}+B_{l-1}''+\alpha_{l-1}n M_{l-1}''\equiv 0/X
$$
and the coefficient of the exceptional divisor of $X_{l-1}''\to X$ 
in $B_{l-1}''$ is equal to $1$. Then $(X,B+\alpha_{l-1}nM)$ is generalised lc but not generalised 
klt. Therefore, we see that $\alpha_{l-1}n=\lambda$. 

Let $i$ be the smallest number so that $\alpha_i n\le \lambda <\alpha_{i-1} n$. Then 
$\alpha_j n=\alpha_{l-1}n=\lambda$, for $i\le j\le l-1$, 
hence  
$$
K_{X_i''}+B_i''+\lambda M_i''\equiv 0/X
$$ 
because in the steps $X_i''\bir X_{l}''=X$ the numbers $\alpha_jn$ are all equal to $\lambda$.
In particular, this means that $({X_i''},B_i''+\lambda M_i'')$ is a crepant model of $(X,B+\lambda M)$.

Let 
$$
(Y,B_Y+\lambda M_Y):=(X_i'',B_i''+\lambda M_i'')
$$
and let $\rho$ denote $Y\to X$.\\ 

\emph{Step 4.}
In this step we show that $(Y,\Supp B_Y)$ belongs to a bounded family of pairs.
By assumption, $A^d\le r$, $A-B$ is pseudo-effective, $pB$ is integral, and 
$(X,B)$ is klt. Then $(X,B)$ belongs to a 
bounded set of pairs, hence $(X,B)$ is $\epsilon$-lc for some $\epsilon>0$ depending only on $d,p,r$. 
Thus by Lemma \ref{l-bnd-glct-e-lc}, $\lambda$ is bounded from below away from zero depending only on $d,p,r$.

By construction, $\rho^*M=M_Y+\sum g_jG_j$ where $G_j$ denote the 
exceptional divisors of $\rho$. By Step 1, $\sum g_j\le \sum e_i\le c$. 
Therefore, there is a real number $u\in (0,{\lambda})$ depending only on $d,p,r$ such that $B_Y-u\sum g_jG_j\ge 0$.

Now 
$$
K_Y+B_Y-u\sum g_jG_j+(\lambda-u)M_Y=K_Y+B_Y+\lambda M_Y-u\rho^*M
$$
$$
=\rho^*(K_X+B+(\lambda-u)M).
$$    
Thus 
$$
(Y,B_Y-u\sum g_jG_j+(\lambda-u)M_Y)
$$ 
is a crepant model of $(X,B+(\lambda-u)M)$. 

Since $\lambda<\infty$, we have $\lambda<n$ by Lemma \ref{l-t>3dp=descend}.  
Let $b=\frac{\lambda-u}{\lambda}$. We can write 
$$
K_X+B+(\lambda-u)M=(1-b)(K_X+B)+b(K_X+B+\lambda M)
$$
where 
$$
1-b=\frac{u}{\lambda}\ge \frac{u}{n}.
$$ 
Thus since $(X,B)$ is $\epsilon$-lc, we deduce that   
$(X,B+(\lambda-u)M)$ is $\delta$-lc where $\delta=\frac{\epsilon u}{n}$. 
But then by [\ref{B-lcyf}, Theorem 2.2], the $(Y,B_Y)$ form a bounded family 
of pairs. Thus we can find a very ample divisor $A_Y$ on $Y$ such that $A_Y^d$ is bounded, say by $s\in\N$. 
Moreover, we can ensure that $A_Y-\rho^*A$ is pseudo-effective, so perhaps after replacing 
$A_Y$ with a bounded multiple we can assume $A_Y-(B_Y+M_Y)$ is pseudo-effective.\\

\emph{Step 5.}
By construction, the reduced exceptional divisor of $Y\to X$ is equal to $\rddown{B_Y}$. 
On the other hand, by Step 2, 
$$
(X'',B''-t\sum e_iE_i''+(n-t) M'')
$$ 
is generalised klt for any small $t>0$, and $M'$ descends to $X''$ as $M''$. Thus the generalised non-klt locus of
$$
(X'',B''+(n-t) M'')
$$ 
is equal to $\Supp \sum e_iE_i''=\rddown{B''}$. We then deduce that the generalised non-klt locus of 
$(X'',B''+\lambda M'')$ is equal to $\rddown{B''}$ because $M'$ descends to $X''$ as $M''$. 

By construction, $Y=X_i''$ where $i$ is the smallest number so that $\alpha_i n\le \lambda <\alpha_{i-1} n$.
This implies that $X''\bir Y=X_i''$ is an MMP on $K_{X''}+B''+\lambda M''$ over $X$. So
we see that any generalised non-klt centre of $(Y,B_Y+\lambda M_Y)$ is the birational transform of a 
generalised non-klt centre of $(X'',B''+\lambda M'')$, hence such centres are contained in $\rddown{B_Y}$. Therefore, 
$$
(Y,B_Y-t\rddown{B_Y}+\lambda M_Y)
$$ 
is generalised klt for any small $t>0$.  

\end{proof}

\subsection{Finiteness of generalised lc thresholds on bounded families}

\begin{lem}\label{l-finiteness-glct-bnd-family}
Let $d,p,r\in \N$. Then there is a finite set $\Lambda$ of rational numbers depending only on $d,p,r$ 
satisfying the following. 
Assume that 
\begin{itemize}
\item $(X,B+M)$ is a projective generalised pair of dimension $d$ with data $X'\overset{\phi}\to X,M'$, 

\item $(X,B)$ is $\Q$-factorial klt,

\item $pB$ is integral and $pM'$ is b-Cartier,

\item $A$ is a very ample divisor on $X$ with $A^d\le r$,

\item $A-(B+M)$ is pseudo-effective, and 

\item $\lambda<\infty$ where $\lambda$ is the generalised lc threshold of $M$ with respect to $(X,B)$.
\end{itemize}
Then $\lambda \in \Lambda$.
\end{lem}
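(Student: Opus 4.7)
\bigskip

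\noindent\emph{Proof plan.} The strategy is to push the question onto the bounded crepant model produced by Proposition \ref{p-bnd-model-by-g-lct}, where $\lambda$ becomes a quotient of two bounded rationals with uniformly bounded denominators; combined with the absolute upper bound on $\lambda$ this forces finiteness.

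First, I observe that we already have uniform two-sided bounds on $\lambda$. Since $\lambda<\infty$, Lemma \ref{l-t>3dp=descend} gives $\lambda<3dp$. On the other hand, the hypotheses $A^d\le r$, $pB$ integral and $A-(B+M)$ pseudo-effective place $(X,B)$ in a log bounded family of klt pairs, so $(X,B)$ is $\epsilon$-lc for some $\epsilon=\epsilon(d,p,r)>0$; then Lemma \ref{l-bnd-glct-e-lc} produces $t=t(d,p,r,\epsilon)>0$ with $\lambda\ge t$. So it is enough to prove that $\lambda$ belongs to a set of rationals whose denominators are bounded in terms of $d,p,r$.

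Next, apply Proposition \ref{p-bnd-model-by-g-lct} to obtain a $\Q$-factorial crepant model $\rho\colon Y\to X$ of $(X,B+\lambda M)$, with $(Y,B_Y+\lambda M_Y)$ generalised lc, $\rddown{B_Y}$ equal to the reduced exceptional divisor of $\rho$, and a very ample $A_Y$ on $Y$ with $A_Y^d\le s$ and $A_Y-(B_Y+M_Y)$ pseudo-effective. Define the sub-boundary $B_Y^{(0)}$ by $K_Y+B_Y^{(0)}=\rho^*(K_X+B)$ and write
$$
\rho^*M=M_Y+\sum g_j G_j
$$
over the exceptional divisors $G_j$ of $\rho$; by linearity $B_Y=B_Y^{(0)}+\lambda\sum g_j G_j$. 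Since $\lambda<\infty$, Lemma \ref{l-t>3dp=descend} prevents $\rho$ from being an isomorphism, so some $G_j$ occurs; and since $\rddown{B_Y}$ is exactly the reduced exceptional divisor, at least one such $G_j$ has coefficient $1$ in $B_Y$. This yields the key identity
$$
\lambda=\frac{1-\mu_{G_j}(B_Y^{(0)})}{g_j}
$$
for some $j$, realising the threshold by a single divisor on the bounded model.

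Finally, I would establish that the two rationals $\mu_{G_j}(B_Y^{(0)})$ and $g_j$ have denominators bounded in terms of $d,p,r$. The pair $(X,B)$ lies in a log bounded family of $\Q$-factorial klt pairs with $pB$ integral, so standard boundedness of indices for klt varieties in bounded families provides $\iota=\iota(d,p,r)\in\N$ with $\iota(K_X+B)$ Cartier and $\iota M$ Cartier (the latter because $pM$ is an integral Weil divisor on a bounded $\Q$-factorial klt variety, whose Cartier index is uniformly bounded). Pulling back by $\rho$ then forces $\iota\mu_{G_j}(B_Y^{(0)})\in\Z$ and $\iota g_j\in\Z$, and Proposition \ref{p-gen-lc-model} gives $0<g_j\le c(d,p,r)$. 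Hence $\iota g_j$ lies in a finite subset of $\N$, $\iota\mu_{G_j}(B_Y^{(0)})\in\Z\cap(-\infty,1]$, and together with $\lambda\le 3dp$ we conclude that $\lambda$ takes values in the finite set
$$
\Lambda:=\left\{\frac{1-a/\iota}{b/\iota}\;\Big|\;a\in\Z,\,b\in\N,\,b\le \iota c,\,0<\tfrac{1-a/\iota}{b/\iota}\le 3dp\right\}\cap\Q.
$$

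The main obstacle is the uniform index bound for $M$: one must confirm that on the bounded family of $\Q$-factorial klt models on which Proposition \ref{p-bnd-model-by-g-lct} lands, the Cartier index of the Weil divisor $pM$ is uniformly controlled by $d,p,r$, so that $g_j$ has the required bounded denominator. Once this is in place the bounded-numerator-and-denominator argument closes the proof immediately.
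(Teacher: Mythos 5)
Your skeleton is sound and your key identity is correct: on the crepant model $Y$ of Proposition \ref{p-bnd-model-by-g-lct} every $\rho$-exceptional divisor $G_j$ has coefficient $1$ in $B_Y$, while $\mu_{G_j}B_Y^{(0)}=1-a(G_j,X,B)<1$ by kltness of $(X,B)$, so $g_j>0$ and $\lambda=a(G_j,X,B)/g_j$. This is a genuinely different way of extracting $\lambda$ from the paper's, which instead produces a $(K_Y+B_Y)$-negative extremal curve $C$ over $X$ with $0<-(K_Y+B_Y)\cdot C\le 2d$ and writes $\lambda=-(K_Y+B_Y)\cdot C/(M_Y\cdot C)$; both routes reduce finiteness to showing that numerator and denominator have bounded size and bounded denominators on a bounded model. (Two small inaccuracies: the reason $\rho$ is not an isomorphism is not Lemma \ref{l-t>3dp=descend} but the second bullet of Proposition \ref{p-bnd-model-by-g-lct} — if $\rho$ were an isomorphism then $\rddown{B_Y}=0$ and $(X,B+\lambda M)$ would be generalised klt, contradicting the definition of $\lambda$; and you need $p\mid\iota$ so that $\iota M_Y$ is integral when you subtract it from $\iota\rho^*M$ to get $\iota g_j\in\Z$.)

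The genuine gap is exactly where you flag it, and your proposed justification does not close it. There is no citable statement that an arbitrary integral Weil divisor on a $\Q$-factorial klt variety in a bounded family has uniformly bounded Cartier index; the available result, [B-compl, Lemma 2.25] (which is what the paper invokes), bounds the Cartier index of a \emph{nef} divisor of bounded degree on a bounded $\epsilon$-lc variety, and $M=\phi_*M'$ is a pushforward of a nef divisor but is not itself nef. The correct fix is the paper's own manoeuvre, carried out there for $M_Y$ on $Y$ and equally available for $M$ on $X$: since $(X,B)$ is log bounded it is $\epsilon$-lc for a fixed $\epsilon(d,p,r)$, so Lemma \ref{l-bnd-glct-e-lc} applied with $D=M$ and trivial nef parts gives a fixed rational $t>0$ with $(X,B+tM)$ klt as a usual pair; then boundedness of lengths of extremal rays makes $K_X+tM+2dA$ nef, its degree against $A$ is bounded because $A-M$ is pseudo-effective, and [B-compl, Lemma 2.25] bounds its Cartier index; the same argument with $M=0$ bounds the Cartier index of $K_X$, and subtracting bounds that of $tM$, hence of $M$. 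With this in place (and the standard bound for the Cartier index of $K_X+B$ on a log bounded klt family with coefficients in $\frac{1}{p}\Z$, which the paper also uses for $K_Y+B_Y$), your finite set $\Lambda$ works.
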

\begin{proof}
By Proposition \ref{p-bnd-model-by-g-lct}, 
there exist a number $s\in \N$ depending only on $d,p,r$ and a 
$\Q$-factorial crepant model $(Y,B_Y+\lambda M_Y)$ of $(X,B+\lambda M)$ and 
a very ample divisor $A_Y$ on $Y$ such that 
\begin{itemize}
\item the reduced exceptional divisor of $\pi\colon Y\to X$ is equal to $\rddown{B_Y}$,

\item the pair 
$$
(Y,B_Y-t\rddown{B_Y}+\lambda M_Y)
$$ 
is generalised klt for any small $t>0$,

\item $A_Y^d\le s$ and $A_Y-(B_Y+M_Y)$ is pseudo-effective.
\end{itemize} 
It turns out that $\pi$ is not an isomorphism: indeed, 
since $(X,B)$ is klt, for each $t>0$, $B-t\rddown{B}=B$; so 
$$
(X,B-t\rddown{B}+\lambda M)=(X,B+\lambda M)
$$ 
is not generalised klt; thus by the second property, $\pi$ is not an isomorphism. 

Assume that $K_Y+B_Y$ is nef over $X$. Then by the negativity lemma, 
$$
K_Y+B_Y+G_Y=\pi^*(K_X+B)
$$ 
for some $G_Y\ge 0$. But this is not possible because $K_X+B$ is klt while $(Y,B_Y+G_Y)$ is not. 
Thus $K_Y+B_Y$ is not nef over $X$. Then there is a $K_Y+B_Y$-negative extremal ray over $X$ generated 
by a curve $C$ with $0<-(K_Y+B_Y)\cdot C\le 2d$ [\ref{kawamata-bnd-ext-ray}]. 
Since $(Y,B_Y)$ belongs to a bounded family of pairs, 
 there are finitely many possibilities for the number $-(K_Y+B_Y)\cdot C$ as the Cartier index of 
 $K_Y+B_Y$ is bounded [\ref{B-compl}, Lemma 2.24]. 
 
On the other hand,  since $Y$ belongs to a bounded family, it is 
$\epsilon$-lc for some $\epsilon>0$ depending only on $d,s$, hence only on $d,p,r$. 
Thus by Lemma \ref{l-bnd-glct-e-lc}, 
there is a rational number $u>0$ depending only on $d,s,\epsilon,$ hence only on $d,p,r$ 
such that $(Y,uM_Y)$ is generalised klt. 
Similarly we can choose $u$ so that $(X,B+uM)$ is generalised klt.

By the previous paragraph and by boundedness of the length of extremal rays,  
$K_Y+uM_Y+2dA_Y$ is nef. Moreover, 
$$
(K_Y+uM_Y+2dA_Y)\cdot A_Y^{d-1}
$$
is bounded from above as $A_Y-M_Y$ is pseudo-effective. Thus 
the Cartier index of $K_Y+uM_Y+2dA_Y$ is bounded from above by [\ref{B-compl}, Lemma 2.25] 
keeping in mind that $l(K_Y+uM_Y+2dA_Y)$ is integral for some $l\in\N$ depending only on $u,p$.
This in turn implies that the Cartier index of $M_Y$ is bounded from above. 

Now with $C$ as above, we have 
$$
(K_Y+B_Y+\lambda M_Y)\cdot C=0,
$$
so 
$$
\lambda=\frac{-(K_Y+B_Y)\cdot C}{M_Y\cdot C}.
$$
Since $\lambda>u$ and since $-(K_Y+B_Y)\cdot C\le 2d$, 
we see that $M_Y\cdot C$ is bounded from above, hence it belongs to a fixed finite set as 
the Cartier index of $M_Y$ is bounded. 
But then $\lambda$ can take only finitely many possibilities.

\end{proof}

\subsection{Construction of towers of crepant models}\label{ss-tower-crep-models}

Let $d,p,r\in \N$. Assume that 
\begin{itemize}
\item $(X,B+M)$ is a projective generalised pair of dimension $d$ with data $X'\overset{\phi}\to X, M'$, 

\item $(X,B)$ is $\Q$-factorial klt,

\item $pB$ is integral and $pM'$ is b-Cartier,

\item $A$ is a very ample divisor on $X$ with $A^d\le r$, and 

\item $A-(B+M)$ is pseudo-effective.
\end{itemize}

We will construct a finite sequence 
$X_i,B_i,M_i,A_i,\lambda_i,r_i$ of varieties, divisors and numbers where $i=0,\dots, l$ for some $l$. The sequence 
$r_0,\dots, r_l$ is a sequence of natural numbers depending only on $d,p,r$ but the other data depend on $d,p,r,X,B,M,A$; 
a priori $l$ also depends on the latter data. 

Let $X_0=X, B_0=B, M_0=M, A_0=A$, $r_0=r$. Let $\lambda_0$ be the generalised lc threshold of $M_0$ 
with respect to $(X_0,B_0)$. The sequence to be constructed satisfies the following properties for $i>0$, 
after replacing $X'$ with a high resolution: 
\begin{enumerate}
\item $(X_i,B_i+M_i)$ is a projective generalised pair with data $X'\to X_i$ and $M'$, 

\item $pB_i$ is integral,

\item $A_i$ is a very ample divisor on $X_i$ with $A_i^d\le r_i$,

\item  $A_i-(B_i+M_i)$ is pseudo-effective, 

\item $\lambda_i$ is the generalised lc threshold of $M_i$ with respect to $(X_i,B_i)$,

\item $(X_{i},\Gamma_i+\lambda_{i-1}M_{i})$ is a $\Q$-factorial crepant model of $(X_{i-1},B_i+\lambda_{i-1}M_{i-1})$ where  
$\rddown{\Gamma_i}$ is the reduced exceptional divisor of $X_{i}\to X_{i-1}$, 

\item $B_i=\Gamma_i-\frac{1}{p}\rddown{\Gamma_i}$, 

\item $(X_{i},B_i+\lambda_{i-1}M_{i})$ is generalised klt, 

\item $\lambda_i>\lambda_{i-1}$, and 

\item $M'$ descends to $X_l$. 

\end{enumerate}

 Suppose that we have already inductively constructed $X_{i},B_i,M_{i},A_{i},\lambda_{i},r_{i}$ satisfying the properties (1)-(9). If $M'$ descends to $X_{i}$, we let $l=i$ and stop. 
Assume then that $M'$ does not descend to $X_{i}$.  We will construct $X_{i+1},B_{i+1},M_{i+1},A_{i+1},\lambda_{i+1},r_{i+1}$ as follows. 

First note that $(X_{i},B_i)$ is klt: if $i=0$, then this holds by assumption; but if $i>0$, then it follows 
from (8). Thus applying Lemma \ref{l-t>3dp=descend} to $(X_{i},B_i+\lambda_{i}M_{i})$ we see that  
$\lambda_{i}<3dp$ otherwise the lemma implies $M'$ descends to $X_{i}$, a contradiction. 

Now applying Proposition \ref{p-bnd-model-by-g-lct} to $(X_{i},B_i+M_{i})$, we see that: 
\begin{itemize}
\item there is $r_{i+1}\in \N$ depending only on $d,p,r_i$, 

\item there is a $\Q$-factorial crepant model $(X_{i+1},\Gamma_{i+1}+\lambda_{i} M_{i+1})$ of $(X_{i},B_i+\lambda_{i} M_{i})$ 
where  $\rddown{\Gamma_{i+1}}$ is the reduced exceptional divisor of $X_{i+1}\to X_{i}$, 

\item there is a very ample divisor $A_{i+1}$ on $X_{i+1}$ with $A_{i+1}^d\le r_{i+1}$, 
 
\item $A_{i+1}-(B_{i+1}+M_{i+1})$ is pseudo-effective, and

\item $(X_{i+1},B_{i+1}+\lambda_{i}M_{i+1})$ is generalised klt where $B_{i+1}=\Gamma_{i+1}-\frac{1}{p}\rddown{\Gamma_{i+1}}$.
\end{itemize}

In particular, $pB_{i+1}$ is integral because $p\Gamma_{i+1}$ is integral as $\Gamma_{i+1}$ is the sum of the 
birational transform of $B_i$ plus the reduced exceptional divisor of $X_{i+1}\to X_{i}$.
 
We let $\lambda_{i+1}$ be the generalised lc threshold of $M_{i+1}$ with respect to $(X_{i+1},B_{i+1})$. 
Since $(X_{i+1},B_{i+1}+\lambda_{i}M_{i+1})$ is generalised klt, $\lambda_{i+1}>\lambda_{i}$. 
Therefore, $X_{i+1}$, $B_{i+1}$, $M_{i+1}$, $A_{i+1}$, $\lambda_{i+1}$, $r_{i+1}$, 
satisfy the properties (1)-(9) listed above with $i+1$ in place of $i$.

We show that the construction stops after finitely many steps, so property (10) holds for some $l$. 
By the ACC for generalised lc thresholds [\ref{BZh}, Theorem 1.5], 
the numbers $\lambda_0<\lambda_1<\dots,$ cannot form an infinite  
sequence, hence there is a minimal $l$ such that $\lambda_l=\infty$, i.e. $M'$ descends to $X_l$, and the 
construction terminates with $X_l$.  

By construction, $r_{i}$ depends only on $d,p,r_{i-1}$, and in turn $r_{i-1}$ depends only on $d,p,r_{i-2}$, and 
so on. Thus the sequence $r_0,r_1,\dots,$ depends only on $d,p,r$. But 
$l$ and the $X_i,B_i,M_i,A_i,\lambda_i$ depend on $d,p,r,X,B,M,A$. 
We will show in the next subsection that in fact $l$ is bounded from above depending only on $d,p,r$.

\subsection{Boundedness of length of towers of crepant models}

\begin{prop}\label{p-bnd-length-tower-crep-models}
Let $d,p,r\in \N$. Then there is $m\in \N$ depending only on $d,p,r$ satisfying the following. Assume that 
\begin{itemize}
\item $(X,B+M)$ is a projective generalised pair of dimension $d$ with data $X'\overset{\phi}\to X,M'$, 

\item $(X,B)$ is $\Q$-factorial klt,

\item $pB$ is integral and $pM'$ is b-Cartier,

\item $A$ is a very ample divisor on $X$ with $A^d\le r$,

\item $A-(B+M)$ is pseudo-effective.
\end{itemize}
Let $X_i,B_i,M_i,A_i,\lambda_i,r_i$, where $0\le i\le l$, be the sequence 
constructed in \ref{ss-tower-crep-models} for $d,p,r,X,B,M,A$. 
Then $l\le m$. 

In particular, 
\begin{itemize}

\item $\Supp B_l$ contains the support of the birational transform of $B$ and the 
exceptional divisor of $X_l\to X$, 

\item $({X}_l, B_l)$ belongs to a bounded set of pairs depending only on $d,p,r$, and  

\item $M'$ descends to ${X}_l$. 
 
\end{itemize}     
\end{prop}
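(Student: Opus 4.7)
The plan is to bound $l$ by combining Lemma \ref{l-finiteness-glct-bnd-family} (finiteness of generalised lc thresholds in bounded families) with a diagonal extraction argument whose termination is guaranteed by the ACC for generalised lc thresholds [\ref{BZh}, Theorem 1.5]. The hard part will be making the bound uniform across all levels simultaneously, since the finite set of possible values of $\lambda_i$ produced by Lemma \ref{l-finiteness-glct-bnd-family} depends on $r_i$, and a priori the sequence $r_0,r_1,\dots$ may grow unboundedly.

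First I would check that for each $0\le i<l$ the data $(X_i,B_i+M_i,A_i)$ satisfies the hypotheses of Lemma \ref{l-finiteness-glct-bnd-family} with parameters $d,p,r_i$: the $\Q$-factorial klt property of $(X_i,B_i)$ (from property (8) of the construction, or by assumption when $i=0$), integrality of $pB_i$ (property (2)), the b-Cartier property of $pM'$, very ampleness of $A_i$ with $A_i^d\le r_i$ (property (3)), pseudo-effectivity of $A_i-(B_i+M_i)$ (property (4)), and finiteness of $\lambda_i$ (because the construction has not yet terminated). Lemma \ref{l-t>3dp=descend} applied to $(X_i,B_i+\lambda_i M_i)$ then gives $\lambda_i<3dp$ whenever $i<l$, since $\lambda_i\ge 3dp$ would force $\lambda_i=\infty$ and make $M'$ descend to $X_i$, stopping the construction. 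Hence every $\lambda_i$ lies in a finite set $\Lambda^{(i)}\subset[0,3dp)$ depending only on $d,p,r$ via the deterministic sequence $r_i$.

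Next I would argue by contradiction. If no uniform bound $m$ exists, there is a sequence of inputs $(X^{(k)},B^{(k)},M^{(k)},A^{(k)})$ with tower lengths $l_k\to\infty$. Since $\Lambda^{(0)}$ is finite, pass to a subsequence on which $\lambda_0^{(k)}$ is constant, say $\lambda_0^*$. Since $r_1$ depends only on $d,p,r$, the set $\Lambda^{(1)}$ is likewise finite and independent of $k$, so I may further extract to make $\lambda_1^{(k)}=\lambda_1^*$ constant, and so on. A diagonal extraction produces a single sequence on which $\lambda_i^{(k)}=\lambda_i^*$ for every $i$, and hence an infinite strictly increasing sequence $\lambda_0^*<\lambda_1^*<\cdots<3dp$. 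Each $\lambda_i^*$ is a generalised lc threshold of a nef divisor with $pM'$ b-Cartier, computed on a $d$-dimensional generalised lc pair whose boundary has coefficients in the fixed finite set $\frac{1}{p}\Z\cap[0,1]$, so this infinite ascending chain contradicts the ACC for generalised lc thresholds [\ref{BZh}, Theorem 1.5].

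Given $l\le m$, the three remaining assertions follow quickly: the formula $B_l=\Gamma_l-\frac{1}{p}\rddown{\Gamma_l}$ from the construction shows that $\Supp B_l$ contains the birational transform of $B$ together with every exceptional divisor of $X_l\to X$ (each appearing with coefficient $1-\frac{1}{p}>0$); boundedness of $(X_l,\Supp B_l)$ follows from $A_l^d\le r_l\le r_m$ with $r_m$ depending only on $d,p,r$, together with the very ampleness of $A_l$; and $M'$ descends to $X_l$ by the stopping rule of the construction in \ref{ss-tower-crep-models}.
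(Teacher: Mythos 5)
Your proof is correct, and it takes a genuinely different route from the paper's. Both arguments rest on the same two pillars — Lemma \ref{l-finiteness-glct-bnd-family} applied to the families $\{X_i^{(k)}\}_k$, which are bounded for each fixed level $i$ because the sequence $r_0,r_1,\dots$ is deterministic in $d,p,r$, and the ACC for generalised lc thresholds [\ref{BZh}, Theorem 1.5] — but they are organised quite differently. You run a level-by-level pigeonhole on the finite sets $\Lambda^{(i)}$ followed by a diagonal extraction, manufacturing a single infinite strictly increasing chain $\lambda_0^*<\lambda_1^*<\cdots<3dp$ of generalised lc thresholds in fixed dimension, with boundary coefficients in $\frac{1}{p}\Z\cap[0,1]$ and nef part of Cartier index $p$, which the ACC forbids outright. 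The paper instead runs an infimum argument on the parameter $t\in[0,3dp]$: it tracks the index $i(t,j)$ at which the $j$-th tower's thresholds first exceed $t$, lets $\mathcal{T}$ be the set of $t$ for which this index is unbounded in $j$, and shows that $\tau=\inf\mathcal{T}$ can lie neither outside $\mathcal{T}$ (the finiteness lemma produces a gap above $\tau$) nor inside it (the ACC produces a gap below $\tau$). Your version is more direct and avoids the two-case analysis; the paper's version only ever invokes the finiteness lemma at a single bounded level, but since the $r_i$ are deterministic anyway this buys nothing essential here. Two small points to tighten: when extracting at level $i$ you should restrict to the cofinite set of $k$ with $l_k>i$ so that $\lambda_i^{(k)}$ is defined and finite before applying the pigeonhole; and for the log boundedness of the couple $(X_l,\Supp B_l)$ you also need $\Supp B_l\cdot A_l^{d-1}$ bounded, which follows from the pseudo-effectivity of $A_l-(B_l+M_l)$ together with the fact that the non-zero coefficients of $B_l$ are at least $\frac{1}{p}$.
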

\begin{proof}
If the proposition is not true, then there is a sequence $X^j,B^j,M^j,A^j$ as in the proposition, $j\in \N$, 
so that if $X_i^j,B^j_i,M_i^j,A_i^j,\lambda_i^j,r_i$ is the sequence constructed for $X^j,B^j,M^j,A^j$ where
$0\le i\le l^j$, then 
the $l^j$ form an infinite strictly increasing sequence of numbers. We will derive a contradiction.

By Lemma \ref{l-finiteness-glct-bnd-family}, for each $i$, there is a finite set of 
rational numbers $\Lambda_i$ depending only on $d,p,r_i$, hence depending only on $d,p,r,i$, 
such that the generalised lc thresholds 
$\lambda_i^j\in \Lambda_i$ whenever $\lambda_i^j\neq \infty$, that is, when $i<l^j$. Therefore, 
there is an infinite set $J_0\subseteq \N$ such that $\alpha_0:=\lambda_0^j\neq \infty$ 
is independent of $j$ for $j\in J_0$. 
Similarly, there is an infinite subset $J_1\subseteq J_0$ such that $\alpha_1:=\lambda_1^j\neq \infty$ is 
independent of $j$ for $j\in J_1$. In particular, since by construction, $\lambda_0^j<\lambda^j_{1}$ 
for every $j\in J_1$, we see that $\alpha_0<\alpha_1$.

Continuing the above process gives a decreasing sequence 
$$
\dots \subseteq  J_i\subseteq J_{i-1}\dots \subseteq J_0\subseteq \N
$$
of infinite subsets and thresholds $\alpha_i:=\lambda_i^j\neq \infty$ 
independent of $j\in J_i$ so that we get an infinite strictly increasing sequence 
$
\alpha_0<\alpha_1<\dots
$
of generalised lc thresholds. This contradicts the ACC for generalised lc thresholds [\ref{BZh}, Theorem 1.5].

\end{proof}

\subsection{Descent of nef divisors to bounded models}

We are now ready to prove our main result on the descent of nef divisors to bounded models.

\begin{proof}(of Theorem \ref{t-descent-nef-divs-to-bnd-models})
\emph{Step 1.}
Pick 
$$
(X,B+M)\in \mathcal{G}_{lc}(d,\Phi,<\!\!v)
$$ 
with data $X'\overset{\phi}\to X$, $M'=\sum \mu_iM_i'$.
In this step we reduce the problem to the situation in which $(X,B+M)$ is generalised klt, 
$pB$ is integral, $p\mu_i$ are integral, and $K_{X}+B+(1-u)M$ is big for some $p\in \N$ and 
$u\in \Q^{>0}$ depending only on $d,\Phi$.

We can assume that $\phi$ is a log resolution of $(X,B)$. Let 
$\Gamma'$ be the sum of the birational transform of $B$ plus the reduced exceptional divisor of $\phi$. 
Then 
$$
K_{X'}+\Gamma'+M'=\phi^*(K_X+B+M)+G'
$$ 
where $G'$ is effective and exceptional over $X$. By [\ref{BZh}, Theorem 8.1], 
$K_{X'}+\alpha \Gamma'+\alpha M'$ is big for some real number $\alpha\in (0,1)$ depending only on $d,\Phi$. 
Pick $\beta \in (\alpha,1)$. 
Since $\Phi$ is DCC, there exists $p\in \N$ such that for each $0<\mu \in \Phi$ there is a smallest natural number 
$q$ such that $\beta \mu\le \frac{q}{p}<\mu$. Denote $\bar{\mu}:=\frac{q}{p}$. Define a boundary $\Delta'$ 
by replacing each non-zero coefficient $\mu$ of $\Gamma'$ with $\bar{\mu}$, and define $N':=\sum \bar{\mu_i}M_i'$. 
This way we get a generalised klt pair $(X',\Delta'+N')$ where  
$p\Delta'$ and $pN'$ are integral. 

Moreover, fixing $u\in (0,1-\frac{\alpha}{\beta})$, we have that 
$K_{X'}+\Delta'+(1-u)N'$ is big because  $\Delta'-\alpha \Gamma'\ge 0$ and because 
$$
(1-u)N'-\alpha M'=\sum ((1-u)\bar{\mu_i}-\alpha\mu_i)M_i'
$$
is nef as 
$$
(1-u)\bar{\mu_i}-\alpha\mu_i\ge (1-u)\beta \mu_i-\alpha\mu_i\ge \frac{\alpha}{\beta}\beta\mu_i-\alpha\mu_i=0.
$$
Replacing $(X,B+M)$ with $(X',\Delta'+N')$ 
we can assume that $(X,B+M)$ is generalised $\frac{1}{p}$-lc, $pB$ is integral and $p\mu_i$ are integral, and that 
$K_{X}+B+(1-u)M$ is big where $p,u$ are chosen depending only on $d,\Phi$.\\

\emph{Step 2.}
In this step we find a bounded birational model of $(X,B+M)$.
By [\ref{BZh}, Lemma 4.4], $(X,B+M)$ has a minimal model as $(X,B+M)$ is generalised klt and 
$K_X+B+M$ is big. Replacing $(X,B+M)$ with the minimal model we can assume that $K_X+B+M$ is nef and big.
By [\ref{BZh}, Theorem 1.3], there is a natural number $m$ depending only on $d,p$ 
such that $p|m$ and $|m(K_X+B+M)|$ defines a birational map. In particular, there is an integral divisor 
$$
0\le L\sim m(K_X+B+M)
$$
with $\vol(L)<m^dv$.
 Applying [\ref{B-compl}, Proposition 4.4] to $X,B,L$, we deduce that 
there exist a natural number $c$ and a bounded set of couples $\mathcal{Q}$ depending only on $d,p,v$ such that 
there is a projective log smooth $(X'',{\Sigma}'')\in \mathcal{Q}$ 
and a birational map ${X}''\bir X$ such that 
\begin{itemize}
\item  ${{\Sigma}}''$  contains the exceptional 
divisor of  ${X}''\bir X$ and the birational transform of $\Supp (B+{L})$;

\item replacing $X'$ we can assume $\psi\colon X'\bir {X}''$ is a morphism, and 

\item each coefficient of $\psi_*\phi^*L$ is at most $c$. 

\end{itemize}

In particular, there is a very ample divisor $A''$ on $X''$ with $A''^d$ bounded from above, say by $r$, 
and such that $A''-\Sigma''$ and $A''-\psi_*\phi^*L$ are pseudo-effective.\\

\emph{Step 3.}
In this step we introduce divisors $\Theta'',M''$ on $X''$ and reduce the problem to the situation in which 
$A''-(\Theta''+M'')$ is pseudo-effective. 
Let $\Theta'$ on $X'$ be the birational transform of $B$ plus $(1-\frac{1}{p})$ times the reduced exceptional divisor of 
$\phi$. Let ${\Theta}'', {M}''$ be the pushdowns of $\Theta',M'$. 
Then ${A}''-{\Theta}''$ is pseudo-effective as ${\Theta}''\le {\Sigma}''$. Note that $p\Theta''$ is integral.

On the other hand, since $K_X+B+(1-u)M$ is big, we can write 
$$
\frac{1}{m}L\sim_\Q K_X+B+M=K_X+B+(1-u)M+uM \sim_\Q uM+R
$$ 
where $R\ge 0$. 
Thus 
$$
\psi_*\phi^*\frac{1}{m}L\sim_\Q \psi_*\phi^*uM+\psi_*\phi^*R
$$
which implies that 
$$
\frac{1}{m}A''-\psi_*\phi^*uM\sim_\Q \frac{1}{m}A''-\psi_*\phi^*\frac{1}{m}L+\psi_*\phi^*R
$$
is pseudo-effective. Therefore, replacing $A''$ with a bounded multiple we can assume that ${A}''-\psi_*\phi^*M$ 
is pseudo-effective which in turn implies $A''-M''$ is pseudo-effective as $\psi_*\phi^*M\ge M''$ because 
$\phi^*M\ge M'$ since $M'$ is nef. Since 
$A''-\Theta''$ is pseudo-effective by the previous paragraph, replacing ${A}''$ with $2{A}''$ we can assume that 
${A}''-({\Theta}''+{M}'')$ is pseudo-effective.\\

\emph{Step 4.}
In this step we finish the proof by 
applying Proposition \ref{p-bnd-length-tower-crep-models}.  By construction, 
\begin{itemize}
\item $({X}'', {\Theta}''+{M}'')$ is a projective generalised pair of dimension $d$ with data $X'\to X'',M'$, 

\item $({X}'', {\Theta}'')$ is log smooth klt,

\item $p\Theta''$ is integral and $pM'$ is Cartier,

\item $A''$ is a very ample divisor on $X''$ with $A''^d\le r$, and

\item $A''-(\Theta''+M'')$ is pseudo-effective.
\end{itemize}

Applying Proposition \ref{p-bnd-length-tower-crep-models} to $d,p,r,X'',\Theta'',M'',A''$, 
we deduce that there is a generalised klt pair $(\overline{X}, \overline{\Theta}+\overline{M})$ 
with data $\rho\colon X'\to \overline{X}$ and $M'$ such that 
\begin{itemize}
\item $\pi\colon \overline{X}\bir X''$ is a morphism,

\item $\overline{\Sigma}:=\Supp \overline{\Theta}$ contains the support of the birational transform of $\Theta''$ and the reduced exceptional divisor of $\pi$, 

\item $(\overline{X}, \overline{\Sigma})$ belongs to a bounded set of couples depending only on $d,p,r$, and 

\item $M'$ descends to $\overline{X}$. 
 
\end{itemize}     
  
Since  $M'$ descends to $\overline{X}$, $M'=\rho^*\rho_*M'$.
Since $M'=\sum \mu_i M_i'$ and $\mu_i>0$ by definition, and since each $M_i'$ is nef, we get 
$M'_i=\rho^*\rho_*M_i'$ for each $i$, hence each $M_i'$ descends to $\overline{X}$.
 
By construction, $\overline{\Sigma}$ contains the exceptional divisors of $\overline{X}\bir X$ and 
the support of the birational transform of $B$: indeed, any prime exceptional divisor of $\overline{X}\bir X$ 
is either an exceptional divisor of $\pi$ or a component of the birational transform of $\Theta''$ so it is a 
component of $\overline{\Sigma}$; 
moreover, any component of the birational transform of $B$ is either exceptional over $X''$ or it is 
a component of the birational transform of $\Theta''$ so again it is a component of $\overline{\Sigma}$. 
 
There exist a bounded set of couples $\mathcal{P}$ depending only on $d,p,r$ such that  
$(\overline{X}, \overline{\Sigma})$ has a log resolution $Y\to \overline{X}$ so that if $\Sigma_Y$ is 
the sum of the birational transform of $\overline{\Sigma}$ and the reduced exceptional divisor of 
$Y\to \overline{X}$, then $(Y,\Sigma_Y)$ belongs to $\mathcal{P}$. Now replacing $(\overline{X}, \overline{\Sigma})$ 
with $(Y,\Sigma_Y)$ we are done as obviously each $M_i'$ descends to $Y$. 
 
\end{proof}

We derive the following result on descent of nef divisors which is in some sense more general than 
\ref{t-descent-nef-divs-to-bnd-models} since we do not put any restriction on singularities.  
It essentially says that if $X$ varies in a bounded family and if $pM'$ is a nef Cartier divisor on 
some birational model whose image on $X$ has bounded ``degree", then we can descend $M'$ to a bounded 
model of $X$. 

\begin{thm}\label{t-descent-nef-divs-to-bnd-models-1}
Let $d,r\in \N$ and $\delta\in \R^{>0}$. 
Then there is a bounded set of couples $\mathcal{P}$ depending only on $d,p,\delta$ 
satisfying the following. Assume that 
\begin{itemize}
\item $\phi\colon X'\to X$ is a birational morphism between normal projective varieties of dimension $d$, 

\item $B\ge 0$ is an $\R$-divisor on $X$ whose non-zero coefficients are $\ge \delta$,

\item $M'=\sum \mu_iM_i'$ where $M'$ are nef and Cartier on $X'$ and $\mu_i\ge \delta$, 

\item $A$ is a very ample divisor on $X$ with $A^d\le r$, and  

\item $A-(B+M)$ is pseudo-effective where $M=\phi_*M'$.
\end{itemize}
Then there exist a log smooth couple $(\overline{X},\overline{\Sigma})\in \mathcal{P}$ and a birational morphism  
$\overline{X}\to X$ such that 
\begin{itemize}
\item $\overline{\Sigma}$ contains the exceptional divisors of $\overline{X}\to X$ and the support of the birational 
transform of $B$, and 

\item each $M_i'$ descends to $\overline{X}$.
\end{itemize}
\end{thm}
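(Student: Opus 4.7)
The plan is to reduce to the hypotheses of Proposition \ref{p-bnd-length-tower-crep-models} on a suitable bounded log smooth klt model, and then invoke that proposition directly.

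\emph{Reduction to a bounded setup.} Since the non-zero coefficients of $B$ are $\ge \frac{1}{p}$, I would first replace $B$ by $B_0 := \frac{1}{p}\Supp B$; then $B_0 \le B$, $pB_0$ is integral, and $A-(B_0+M)$ remains pseudo-effective. Since $M'$ is nef and $\phi^*A$ is nef, the projection formula gives $A^{d-1}\cdot M \ge 0$, so intersecting with $A^{d-1}$ yields $A^{d-1}\cdot B_0 \le A^d \le r$, bounding the number of components of $B$ and their degrees. Combined with $X$ being in a bounded family (via $A$ very ample with $A^d \le r$), the couple $(X,\Supp B_0)$ lies in a bounded family depending only on $d,p,r$. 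I would then take a log resolution $\pi\colon Y \to X$ of $(X,\Supp B_0)$ inside a bounded family of log smooth couples $(Y,\Sigma_Y)$, where $\Sigma_Y$ is the birational transform of $\Supp B_0$ plus the reduced exceptional divisor of $\pi$. Replacing $X'$ by a common resolution, we may assume $X' \to Y$ is a morphism $\rho$ with $\phi = \pi\circ\rho$.

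\emph{Constructing a generalised pair on $Y$ satisfying the proposition's hypotheses.} Set $\Theta_Y$ equal to the birational transform of $B_0$ plus $(1-\frac{1}{p})$ times the reduced exceptional divisor of $\pi$; then $(Y,\Theta_Y)$ is log smooth klt and $p\Theta_Y$ is integral. Let $M_Y:=\rho_*M'$; since $Y$ is smooth, $M_Y$ is $\Q$-Cartier, so $(Y,\Theta_Y+M_Y)$ is a generalised pair with data $X'\to Y$ and $M'$. To verify the pseudo-effectivity hypothesis, apply the negativity lemma to $\rho$: since $M'$ is nef and hence $\rho$-nef, $\rho^*M_Y - M'$ is $\rho$-exceptional with trivial pushforward, so $\rho^*M_Y \ge M'$; a curve-lifting argument then shows $M_Y$ is nef over $X$, whence the negativity lemma applied to $\pi$ gives $\pi^*M \ge M_Y$. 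Hence $\pi^*A - M_Y \ge \pi^*(A-M)$ is pseudo-effective. Since $(Y,\Sigma_Y)$ is bounded, I can choose a very ample $A_Y$ on $Y$ with $A_Y^d \le r'$ for some $r'$ depending only on $d,p,r$ and with $A_Y - \pi^*A - \Theta_Y$ pseudo-effective, whence $A_Y - (\Theta_Y + M_Y)$ is pseudo-effective.

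\emph{Conclusion.} I would now apply Proposition \ref{p-bnd-length-tower-crep-models} to $(Y,\Theta_Y+M_Y)$ with data $X' \to Y$, $M'$, and divisor $A_Y$, with parameters $d,p,r'$. The output is a bounded $\Q$-factorial klt model $\overline{X}_0$ dominating $Y$ such that $(\overline{X}_0,\Supp\overline{B}_0)$ lies in a bounded set of couples depending only on $d,p,r$ and $M'$ descends to $\overline{X}_0$; by construction, $\Supp\overline{B}_0$ contains the exceptional divisors of $\overline{X}_0 \to Y \to X$ together with the birational transform of $B$. A final log resolution of $(\overline{X}_0,\Supp\overline{B}_0)$ within a bounded family produces the required $(\overline{X},\overline{\Sigma})\in\mathcal{P}$, and $M'$ still descends to $\overline{X}$ by pullback. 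I expect the main technical point to be the pseudo-effectivity step, which needs the negativity lemma applied twice, through both $\rho$ and $\pi$; the rest essentially reduces the statement to Proposition \ref{p-bnd-length-tower-crep-models}.
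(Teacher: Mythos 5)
Your route is genuinely different from the paper's: the paper proves this theorem by setting $N'=M'+3d\phi^*A$, observing that $K_{X'}+B'+N'$ is big of bounded volume, feeding the generalised pair $(X',B'+N')$ into Theorem \ref{t-descent-nef-divs-to-bnd-models}, and then recovering the descent of $M'$ and the fact that $\overline{X}\bir X$ is a morphism from the descent of $N'$ and of $3d\phi^*A$. You instead exploit the fact that $X$ is already bounded and jump straight to Proposition \ref{p-bnd-length-tower-crep-models} on a bounded log resolution $Y$ of $(X,\Supp B)$. If this worked it would be a clean shortcut (and it produces the morphism $\overline{X}\to X$ for free, since the tower consists of morphisms over $Y$), but there is a gap at exactly the step you yourself identify as the main technical point.

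The gap is the verification that $A_Y-(\Theta_Y+M_Y)$ is pseudo-effective, where $M_Y=\rho_*M'$. Your chain of inequalities ends with ``$\pi^*M\ge M_Y$, hence $\pi^*A-M_Y\ge \pi^*(A-M)$,'' which presupposes that $M=\phi_*M'$ is $\R$-Cartier on $X$ so that $\pi^*M$ and $\pi^*(A-M)$ make sense; the theorem only assumes $X$ normal, and $M$ need not be $\R$-Cartier (this generality matters in the intended applications to moduli divisors on non-$\Q$-factorial bases). The negativity lemma applied to $\rho$ gives $\rho^*M_Y\ge M'$, which is a bound in the wrong direction: what you need is an upper bound on the coefficients of $M_Y$ along the $\pi$-exceptional divisors of $Y\to X$, and these coefficients are invisible to the hypothesis $A-(B+M)$ pseudo-effective, since they vanish under $\pi_*$. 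Controlling the trace of $M'$ on an intermediate bounded model is precisely the crux of the whole section, and the paper obtains it not by pulling back from $X$ but by making $K_{X'}+B'+(1-u)N'$ big, writing $uN'$ inside an effective member $L$ of a birationally bounded pluricanonical system, and using [\ref{B-compl}, Proposition 4.4] to bound the degree of $\psi_*\phi^*L$ on the bounded model. Your intermediate claim that ``a curve-lifting argument shows $M_Y$ is nef over $X$'' is also unjustified: a curve contained in the image of the $\rho$-exceptional locus may only lift into $\Supp(\rho^*M_Y-M')$, where the intersection with the exceptional correction can be negative. (If one does grant that $M$ is $\R$-Cartier, then $\pi^*M-M_Y=\rho_*(\phi^*M-M')\ge0$ follows directly by pushing forward the negativity inequality for $\phi$, and your argument goes through; but that is a strictly weaker theorem.) A further minor point: when $p=1$ your $\Theta_Y$ acquires coefficients equal to $1$ (on the birational transform of $\Supp B$) and $0$ (on the $\pi$-exceptional divisors), so $(Y,\Theta_Y)$ is not klt and $\Supp\Theta_Y$ misses the exceptional divisors of $\pi$; this is harmless after replacing $p$ by $2p$, but should be said.
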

\begin{proof}
Since $A^d\le r$, $X$ belongs to a bounded family of varieties. Replacing $A$ we can assume 
$A-(K_X+B+M)$ is pseudo-effective. Replace $\delta$ with $\frac{1}{p}<\delta$ 
for some $p\in \N$. Decreasing the coefficients of $B$ and the $\mu_i$ we can assume that 
$pB$ is integral and the coefficients of $B$ are $\le 1$, and that $p\mu_i$ are all integral. 

Replacing $X'$ we can assume $\phi$ is a log resolution of $(X,B)$. Let $B'$ be the sum of the 
birational transform of $B$ and the reduced exceptional divisor of $\phi$. Let $N'=M'+3d A'$ 
where $A'=\phi^* A$. Then $K_{X'}+B'+N'$ is big, $pB'$ is integral, and $pN'$ is Cartier. 
Moreover, 
$$
\vol(K_{X'}+B'+N')\le \vol(K_X+B+M+3dA)
$$
$$
\le \vol((1+3d)A)\le (1+3d)^dr
$$
because $A-(K_X+B+M)$ is pseudo-effective. 

Considering $(X',B'+N')$ as a generalised pair with 
data $X'\to X'$ and $N'$, 
$$
(X',B'+N')\in \mathcal{G}_{lc}(d,\Phi,<\!\!v)
$$
where  $\Phi=\{\frac{1}{p}\}$ and $v=(1+3d)^dr+1$. 
Thus by Theorem \ref{t-descent-nef-divs-to-bnd-models}, 
there exists a bounded set of couples $\mathcal{P}$ depending only on $d,p,v$, hence depending only $d,r,\delta$, 
such that there exist a log smooth couple $(\overline{X},\overline{\Sigma})\in \mathcal{P}$, and a birational map 
$\overline{X}\bir X'$ such that 
\begin{itemize}
\item $\overline{\Sigma}$ contains the exceptional divisors of $\overline{X}\bir X'$ and the support of the birational 
transform of $B'$, and 

\item $N'$ descends to $\overline{X}$.
\end{itemize}

Thus there is a common resolution $X''$ of $X'$ and $\overline{X}$ such that if 
$N''=M''+3dA''$ denotes the pullback of $N'=M'+3dA'$, then $N''$ is the pullback of a divisor on $\overline{X}$. 
In particular,  any curve $C$ contracted by $X''\to \overline{X}$ is also contracted by $X''\to X$  
because from $N''\cdot C=0$ we get $A''\cdot C=0$. 
This implies that the induced map $\overline{X}\bir X$ is actually a morphism. 

Now $\overline{\Sigma}$ contains the exceptional divisors of $\overline{X}\to X$ and the support of the birational 
transform of $B$: indeed any prime exceptional divisor of $\overline{X}\to X$ is either exceptional over $X'$ 
or else is a component of the birational transform of $B'$; moreover, the support of the birational 
transform of $B$ is contained in the support of the birational transform of $B'$. 

On the other hand,  
since $N'=\sum \mu_iM_i'+3dA'$ descends to $\overline{X}$ and since the $M_i'$ and $A'$ are all nef, 
each $M_i'$ descends to $\overline{X}$. 

\end{proof}

%%%%%%%%%%%%%%%%%%%%%%%%%%%%%%%%
%%%%%%%%%%%%%%%%%%%%%%%%%%%%%%%%

\section{\bf DCC of volume of generalised pairs}

In this section we prove Theorem \ref{t-dcc-vol-gen-pairs}. 
The proof consists of two main parts. One part treats the theorem for pairs birational to a fixed variety. 
This is \ref{p-dcc-vol-gen-pair-fixed-base} a variant of which was first proved for generalised pairs 
in [\ref{Filipazzi}, Theorem 1.10] where the proof is nearly identical to the proof of 
[\ref{HMX1}, Proposition 5.1] for usual pairs. Our proof here also generally follows  
[\ref{HMX1}] but it is somewhat simpler than both [\ref{HMX1}][\ref{Filipazzi}]. 
The second part reduces the general statement to the case in part one which is again modelled on [\ref{HMX1}] 
but the nef divisors involved need extra care. 
For this part also we generally follow [\ref{Filipazzi}] although some of the details are different.

\subsection{DCC of volumes of pairs with fixed birational model}

\begin{prop}\label{p-dcc-vol-gen-pair-fixed-base}
Let $\Phi\subset \R^{\ge 0}$ be a DCC set. 
Assume that $(Y,\Delta)$ is a projective $\Q$-factorial strictly toroidal pair and $N_1,\dots,N_q$ are 
pseudo-effective $\R$-Cartier $\R$-divisors on $Y$.
Let $\mathcal{F}$ be the set of $(X,B+M)$ equipped with a birational 
morphism $\phi\colon X\to Y$ where 
\begin{itemize}
\item $(X,B)$ is a projective lc pair, 

\item the coefficients of $B$ are in $\Phi$,  

\item $\phi_*B\le \Delta$, and 

\item $M=\phi^*\sum_1^q \mu_jN_j$ where $\mu_j\in \Phi$. 
\end{itemize}
Then 
$$
\{\vol(K_X+B+M) \mid (X,B+M)\in \mathcal{F}\}
$$
satisfies the DCC.
\end{prop}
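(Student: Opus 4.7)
The plan is a proof by contradiction following the strategy of [\ref{HMX1}, Proposition 5.1] and [\ref{Filipazzi}, Theorem 1.10], adapted to handle the pseudo-effective data $N_j$. Suppose the set of volumes is not DCC; extract a sequence $(X_i,B_i+M_i)\in\mathcal{F}$ with $v_i:=\vol(K_{X_i}+B_i+M_i)$ strictly decreasing. Push everything down to $Y$: let $\Gamma_i:=\phi_{i*}B_i\le \Delta$ and $M^Y_i:=\sum_j \mu_{j,i} N_j$, so $M_i=\phi_i^* M^Y_i$. The coefficients of $\Gamma_i$ along each component of $\Supp\Delta$, and each $\mu_{j,i}$, lie in the DCC set $\Phi$, so a diagonal argument over the finitely many such sequences lets me pass to a subsequence in which every such coefficient is non-decreasing in $i$; boundedness by $\Delta$ gives finite limits $\gamma_D$ for the $\Gamma_i$-coefficients. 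I may also assume the $\mu_{j,i}$ converge to finite $\mu_j$, because if $\mu_{j,i}\to\infty$ while $N_j$ contributes non-trivially to the volume, $v_i$ would be forced to grow and contradict $v_i\le v_1$.

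Next, I would compare $v_i$ to $w_i:=\vol(K_Y+\Gamma_i+M^Y_i)$. The monotonicity makes the difference $(K_Y+\Gamma_{i+1}+M^Y_{i+1})-(K_Y+\Gamma_i+M^Y_i)$ numerically pseudo-effective, being an effective divisor plus a non-negative combination of pseudo-effective $N_j$; since volume is non-decreasing along the pseudo-effective cone, $w_i$ is non-decreasing. Taking a common log resolution obtained by toroidal blowups of $(Y,\Delta+\sum\Supp N_j)$ (preserving strict toroidality), and replacing each $X_i$ by a model dominating it, I would relate $v_i$ and $w_i$ via the exceptional contribution $B_i-\tilde B_i$, where $K_{X_i}+\tilde B_i:=\phi_i^*(K_Y+\Gamma_i)$. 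Since $(X_i,B_i)$ is lc and $(Y,\Gamma_i)$ is lc (toroidal with $\Gamma_i\le\Delta$), the coefficients of $B_i-\tilde B_i$ on each exceptional prime lie in a DCC set depending only on $\Phi$ and the fixed discrepancies of $(Y,\Delta)$. A further diagonalization makes these exceptional coefficients also non-decreasing in $i$; combined with monotonicity of $\Gamma_i$ and $M^Y_i$, this forces $v_i$ to be non-decreasing, contradicting strict decrease. Where this fails directly (for instance if $K_Y+\Gamma+M^Y$ is not big, where $\Gamma=\lim\Gamma_i$, $M^Y=\lim M_i^Y$), I would invoke divisorial adjunction to a non-klt centre of $(Y,\Gamma)$: by the strict toroidal hypothesis such centres are normal and adjunction sends DCC coefficients to DCC coefficients (\ref{ss-toroidal-pairs}), reducing to a lower-dimensional instance of the proposition and allowing induction on $\dim Y$.

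The main obstacle I anticipate is organizing everything on a common birational model so that the $B_i$'s — supported a priori on different $X_i$'s — can be compared coefficient-wise, and ensuring that the exceptional contributions lie in a single DCC set so a non-decreasing subsequence exists. The strict toroidal structure on $(Y,\Delta)$ is essential here: it guarantees normality of non-klt centres of $(Y,\Gamma_i)$, hence DCC-controlled divisorial adjunction, and it allows iterated toroidal blowups to stay within a controlled family of log smooth models. The second obstacle is the pseudo-effective (rather than nef) nature of the $N_j$: one must use numerical monotonicity of volume along the pseudo-effective cone rather than coefficient-wise monotonicity, because $\sum_j \mu_{j,i} N_j$ need not be a coefficient-wise non-decreasing sequence of divisors even when the $\mu_{j,i}$ are.
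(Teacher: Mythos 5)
Your setup (contradiction, pushing down to $Y$, diagonalizing to make the boundary coefficients and the $\mu_{j,i}$ non-decreasing, and using pseudo-effectivity of the $N_j$ for monotonicity of volume) matches the paper's Steps 1--3 and the very end of its Step 8. But the core of your argument has a gap that I do not see how to close. After the MMP/negativity-lemma step one gets $K_{X_i}+B_i+G_i=\phi_i^*(K_Y+C_i)$ with $G_i\ge 0$, i.e.\ $B_i\le \tilde B_i$, so the comparison you set up only yields $v_i\le w_i:=\vol(K_Y+\Gamma_i+M_i^Y)$. Knowing that $w_i$ is non-decreasing therefore gives an increasing \emph{upper} bound for $v_i$ and says nothing against $v_i$ being strictly decreasing. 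What is actually needed is a matching \emph{lower} bound of the form $v_i\ge \vol(K_Y+tC+\sum\mu_{i,j}N_j)$ for $t<1$ and $i\gg 0$, i.e.\ the inequality $K_{X_i}+B_i\ge \phi_i^*(K_Y+tC)$ where $C$ is the limit of the $C_i$. This fails in general and is the whole difficulty: it holds only after one has modified $Y$ by finitely many toroidal extractions so that every exceptional toroidal divisor $D$ satisfies $\mu_D\mathbf{C}\ge 1-a(D,Y,C)$ (the condition $\mathcal{D}_{<}(Y,C)=\emptyset$ in the paper). Your proposal does not produce this; instead it appeals to ``a common log resolution'' dominated by (or dominating) all the $X_i$, which does not exist, since the $X_i$ extract an a priori unbounded collection of divisors over $Y$. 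Coefficient-wise monotonicity on varying models does not let you compare $\vol$ on $X_i$ with $\vol$ on $X_{i+1}$.

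The paper closes exactly this gap with the weight argument (its Steps 5--7): it introduces the sets $\mathcal{D}_{\le}$ and $\mathcal{D}_{<}$ of ``offending'' exceptional toroidal divisors, attaches to $(Y,C)$ a weight valued in $\N^3\times(\text{a DCC set})$ built from non-klt centres of $(Y,C)$ and adjunction on them (strict toroidality is used precisely to make this adjunction well-behaved with DCC coefficients), and shows the weight strictly drops under suitable toroidal extractions, so after finitely many steps $\mathcal{D}_{<}(Y,C)=\emptyset$. Your fallback of ``divisorial adjunction to a non-klt centre plus induction on $\dim Y$'' gestures at the same ingredients but is not the same mechanism: the paper never reduces the dimension of the volume computation, it only uses adjunction to define a termination invariant. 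As written, your argument would need either a proof that the relevant termination holds or an actual dimensional reduction of the volume statement, and neither is supplied; so the proof is incomplete at its decisive step.
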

\begin{proof}
Note that $(X,B+M)$ is not necessarily a generalised pair because the $N_i$ may not be nef. 
However, in practice when we apply the proposition, the $N_i$ would be nef.\\

\emph{Step 1.}
Since 
$\Phi$ is DCC, its closure $\overline{\Phi}\subset [0,\infty)$ is also DCC. Replacing $\Phi$ with 
$\overline{\Phi}\cup \{1\}$ we can assume $\Phi$ contains its limiting points and $1\in \Phi$.
If the proposition does not hold, then there is a sequence $(X_i,B_i+M_i)$, 
$\phi_i\colon X_i\to Y$, $M_i=\sum \mu_{i,j}N_{j}$ as in the proposition 
such that the volumes 
$$
v_i=\vol(K_{X_i}+B_i+M_i)
$$ 
form a strictly decreasing sequence of numbers. \\ 

\emph{Step 2.}
Replacing $(X_i,B_i)$ with a $\Q$-factorial dlt model we can assume $(X_i,B_i)$ is $\Q$-factorial dlt.  
Running an MMP on $K_{X_i}+B_i$ over $Y$ with scaling of some ample divisor, we reach a model on which 
the pushdown of $K_{X_i}+B_i$ is a limit of movable$/Y$ $\R$-divisors. Replacing $X_i$ with that model 
we can assume that $K_{X_i}+B_i$ is a limit of movable$/Y$ $\R$-divisors. By the general negativity lemma 
[\ref{B-lc-flips}, Lemma 3.3], 
$$
K_{X_i}+B_i+G_i=\phi_i^*(K_Y+C_i)
$$
for some $G_i\ge 0$ where $C_i=\phi_*B_i$. Since $C_i\le \Delta$, for every component $D$ of $B_i+G_i$ 
the log discrepancies satisfy  
$$
a(D,Y,\Delta)\le a(D,Y,C_i)=a(D,X_i,B_i+G_i)<1,
$$
hence $a(D,Y,\Delta)=0$, so $D$ is a toroidal divisor with respect to $(Y,\Delta)$.\\ 

\emph{Step 3.}
For each $i$, let $\mathbf{M}_i$ be the b-divisor with trace $\mathbf{M}_{i,{X_i}}=B_i$ and whose coefficient 
in any exceptional prime divisor over $X_i$ is $1$. The set of all prime divisors $D$ over $Y$ such that 
$0<\mu_D\mathbf{M}_i<1$ for some $i$, is a countable set because for each $i$ there are only finitely many $D$ 
with $0<\mu_D\mathbf{M}_i<1$. Using the asumption that $\Phi$ is DCC, 
it is then not hard to reduce the problem to the situation in which for each 
prime divisor $D$ over $Y$, the sequence $\mu_D \mathbf{M}_i$ is an increasing sequence for $i\gg 0$. 
Thus we can define a limiting b-divisor $\mathbf{C}$ by setting $\mu_D\mathbf{C}=\lim_i \mu_D \mathbf{M}_i$
for each $D$. By Step 1, the coefficients of $\mathbf{C}$ are in $\Phi$.\\

\emph{Step 4.}
Assume $\pi\colon Y'\to Y$ is a birational contraction that is toroidal with respect to $(Y,\Delta)$, 
and let $K_{Y'}+\Delta'$ be the pullback of $K_Y+\Delta$. 
For each $i$, taking a common log resolution $W$ of $X_i,Y'$ and then running 
an MMP on $K_W+\mathbf{M}_{i,W}$ over $Y'$ as in Step 2, we can construct $(X_i',B_i')$ 
over $Y'$ where $B_i'$ is the birational transform of $B_i$ plus the reduced exceptional divisor of $X_i'\bir X_i$, 
that is, $B_i'=\mathbf{M}_{i,X_i'}$.  
Since $K_{X_i}+B_i$ is a limit of movable$/Y$ $\R$-divisors, $X_i\bir X_i'$ does not contract any divisor. 
Note that 
$$
v_i=\vol(K_{X_i'}+B_i'+M_i')
$$ 
where $\phi_i'$ denotes $X_i'\to Y'$ and $M_i'=\phi_i'^*\pi^*\sum \mu_{i,j}N_{j}$. 

In the subsequent steps, when necessary, we will feel free to replace $(Y,\Delta)$ 
with $(Y',\Delta')$ and replace $(X_i,B_i+M_i)$ with $(X_i',B_i'+M_i')$. 
This preserves the b-divisors $\mathbf{M}_i$ and their limit $\mathbf{C}$ as $X_i\bir X_i'$ does not contract any divisor.\\

\emph{Step 5.}
Let $C:=\mathbf{C}_Y=\lim_i C_i$.
Let $\mathcal{D}_{\le}(Y,C)$ (resp. $\mathcal{D}_{<}(Y,C)$) be the set of exceptional prime divisors $D$ over $Y$, 
toroidal with respect to $(Y,\Delta)$, such that 
$$
(*) \ \ \ \mu_D\mathbf{C}\le 1-a(D,Y,C) \ \ \ \ \ \ \ \ \ (\mbox{resp. $\le$ replaced by $<$}).
$$  

Assume $D\in \mathcal{D}_{\le}(Y,C)$ and let $\pi\colon Y'\to Y$ be the 
extremal birational contraction which extracts $D$. 
Since $D$ is toroidal with respect to $(Y,\Delta)$, 
there is a toroidal birational modification of $Y$ extracting $D$ (but no other divisors); this coincides with 
$\pi$ since $Y'$ is $\Q$-factorial and $\pi$ is extremal. Moreover, if $K_{Y'}+\Delta'$ is the pullback of 
$K_Y+\Delta$, then $(Y',\Delta')$ is strictly toroidal [\ref{KKMS}, page 90, Theorem 6*].

Letting $C':=\mathbf{C}_{Y'}$, we have $K_{Y'}+C'\leq \pi^*(K_Y+C)$ by $(*)$ where equality holds iff $D\notin \mathcal{D}_{<}(Y,C)$. In particular,
$$
\mathcal{D}_{\le}(Y',C')\subsetneq\mathcal{D}_{\le}(Y,C)
$$ 
as $D$ does not belong to the former set. Moreover, 
$$
\mathcal{D}_{<}(Y',C')\subseteq\mathcal{D}_{<}(Y,C)
$$ 
and $\subsetneq$ holds if $D\in \mathcal{D}_{<}(Y,C)$.\\

\emph{Step 6.}
We associate a weight $w=(p,r,l,d)$ to $(Y,C)$ as follows. 
We first define the weight $w_V$ for each non-klt centre $V$ of $(Y,C)$. 
Let $r$ be the codimension of $V$. Since $(Y,\Delta)$ is strictly toroidal, 
we have an adjunction formula   
$K_V+C_V=(K_Y+C)|_V$ (see \ref{ss-toroidal-pairs}). 
Let $l$ be the number of prime exceptional divisors $S$ over $V$ mapping into
the klt locus of $(V,C_V)$ and with $a(S,V,C_V)<1$. And let $d$ be the sum of the coefficients of 
$C_V$. Put $w_V=(r,l,d)$.

Now we define the weight $w$ of $(Y,C)$.
Let $\mathcal{V}(Y,C)$ be the set of non-klt 
centres of $(Y,C)$ which intersect the centre of some $D\in \mathcal{D}_{<}(Y,C)$. 
Let $p$ be the number of elements of $\mathcal{V}(Y,C)$.
If $p=0$, then let $r=0$, 
let $l$ be the number of elements of $\mathcal{D}_{<}(Y,C)$, which is finite in this case, 
and let $d$ be the sum of the coefficients of $C$.
If $p>0$, then choose $V\in \mathcal{V}(Y,C)$ with maximal $w_V=(r,l,d)$ with respect to the lexicographic order 
on 3-tuples. Then in any case let $w:=(p,r,l,d)$. 

Note that $p,r,l\in \N\cup \{0\}$ while $d$ belongs to a DCC set $\Theta$ depending only on 
$\Phi$ because the coefficients of $C$ belong to $\Phi$ and the coefficients of $C_V$ 
belong to a DCC set $\Psi$ depending only on $\Phi$. We consider the lexicographic order on the weights $w$.\\

\emph{Step 7.}
Assume $w=(p,r,l,d)$ is the weight of $(Y,C)$. Suppose $(p,r,l)\neq (0,0,0)$. 
If $p=0$, then $r=0$ but $l\neq 0$, so there is $D\in \mathcal{D}_{<}(Y,C)$ with centre $L$ 
contained in the klt locus of $(Y,C)$. 
But if $p>0$, then $r>0$ and there is $D\in \mathcal{D}_{<}(Y,C)$ with centre $L$ intersecting some 
element of $\mathcal{V}(Y,C)$.
In either case, let $\pi\colon Y'\to Y$ and $C'$ be as in Step 5 constructed for $D$. 
Then each element of $\mathcal{V}(Y',C')$ maps birationally onto an 
element of $\mathcal{V}(Y,C)$.

Assume $w'=(p',r',l',d')$ is the weight of $(Y',C')$ defined similarly as in Step 6. Then $p'\le p$ by the previous paragraph.
If $p=0$, then $p'=r'=0$ but $l'<l$, so $w'<w$. If $p'<p$, then again $w'<w$; 
for example, if there is a non-klt centre of $(Y,C)$ contained in $L$, then $p'<p$ holds. 
In these cases, we replace $(Y,\Delta)$ with $(Y',\Delta')$ as in Step 4, 
so decrease the weight of $(Y,C)$. 

Now assume $p'=p>0$, in particular, no non-klt centre of $(Y,C)$ is contained in $L$.
Assume that $V\in \mathcal{V}(Y,C)$ is minimal with respect to inclusion 
among the non-klt centres intersecting $L$. Let 
$V'\subset Y'$ be its birational transform. Assume $w_V=(s,m,e)$ and $w_{V'}=(s',m',e')$. 
We claim that $w_{V'}<w_V$. Clearly $s'=s$.  
Define 
$$
K_{V}+C_{V}=(K_{Y}+C)|_{V} ~~~\mbox{and}~~~ K_{V'}+C_{V'}'=(K_{Y'}+C')|_{V'}
$$ 
by adjunction. Then $(V,C_V)$ is klt near $L\cap V$ by the minimality of $V$. 
By construction, $K_{Y'}+C'\lneq \pi^*(K_Y+C)$. 
Then $K_{V'}+C_{V'}'\lneq \rho^*(K_V+C_V)$ where $\rho$ denotes $V'\to V$,  
and equality holds over the non-klt locus of $(V,C_V)$ as this non-klt locus is disjoint from $L$. Thus $m'\le m$.
If $\rho$ contracts a divisor, then $m'<m$. But if $\rho$ does not contract any divisor, 
then $e'<e$. Thus the claim $w_{V'}<w_V$ follows. 

Assume $U'\in\mathcal{V}(Y',C')$ and let $U$ be its image on $Y$. 
If $U$ does not intersect $L$, then $w_{U'}=w_U\le w$. 
But if $U$ intersects $L$, then $w_{U'}<w$: if $U$ is minimal among the non-klt centres 
intersecting $L$, then $w_{U'}<w_U\le w$ by the previous paragraph; 
if $U$ is not minimal, then there is $T\in \mathcal{V}(Y,C)$ intersecting $L$ and $T\subsetneq U$, so
$w_{U'}<w_T\le w$. 

Now replace $(Y,\Delta)$ with $(Y',\Delta')$ as in Step 4, and repeat this step. 
The above arguments show that eventually we can decrease the weight of $(Y,C)$ by choosing $D$ appropriately in each step. 
Since we cannot decrease the weight infinitely many times we arrive at the case $p=r=l=0$. 
\\

\emph{Step 8.}
From now we assume $w=(p,r,l,d)=(0,0,0,d)$, equivalently, $\mathcal{D}_{<}(Y,C)=\emptyset$.
If there exists $D\in \mathcal{D}_{\le}(Y,C)$ with $a(D,Y,C)<1$ and whose centre 
on $Y$ is not contained in $\Supp C$, then we replace $(Y,C)$ with $(Y',C')$ where the latter is constructed for $D$
as in Step 5. The condition  $\mathcal{D}_{<}(Y,C)=\emptyset$ is preserved. 
Since $Y$ is klt outside $\Supp C$, repeating this finitely many times 
we can assume there is no such $D$. 

Pick $t\in (0,1)$. Then $(Y,tC)$ is klt, so there are finitely many prime divisors $D$ 
over $Y$ with $a(D,Y,tC)< 1$. We claim that, if $i\gg 0$, then $K_{X_i}+B_i\ge \phi_i^*(Y+tC)$: 
it is enough to check this in prime divisors $D\subset X_i$ as in the previous sentence which are 
automatically toroidal with respect to $(Y,\Delta)$; 
for such $D$ we can assume $\mu_DB_i$ is sufficiently close to $\mu_D \mathbf{C}_{X_i}$; 
if $D$ is not exceptional over $Y$, then the claim follows from $t<1$;
if it is exceptional, then since $\mathcal{D}_{<}(Y,C)=\emptyset$, writing 
$K_{X_i}+A_i=\phi_i^*(K_Y+C)$, we have $\mu_D \mathbf{C}_{X_i}\ge \mu_DA_i$; thus  
$\mu_D \mathbf{C}_{X_i}=\mu_DA_i$ by Step 2 as $C_i={\phi_i}_*B_i\le C$;  
then $D\in \mathcal{D}_{\le}(Y,C)$ and by the previous paragraph, the centre of $D$ is contained in $\Supp C$;   
the claim then again follows from $t<1$.

Now for $i\gg 0$, we get  
$$
v_i=\vol(K_{X_i}+B_i+M_i)\ge \vol(K_Y+tC+\sum \mu_{i,j}N_j).
$$
We can assume that, for each $j$, the $\mu_{i,j}$ are increasing sequences with limit $\mu_j$. 
 Therefore, 
$$
v=\lim_i v_i\ge \lim_i \vol(K_Y+tC+\sum \mu_{i,j}N_j)=\vol(K_Y+tC+\sum \mu_{j}N_j)
$$
because the volume is a continuous function on the numerical classes of divisors [\ref{Lazarsfeld}, Corollary 2.2.45].
Taking the limit when $t$ approaches $1$, we see that for each $i$, we have 
$$
\vol(K_Y+C+\sum \mu_{i,j}N_j)\ge \vol(K_{X_i}+B_i+M_i)\ge v\ge \vol(K_Y+C+\sum \mu_{j}N_j).
$$
Since $N_j$ are pseudo-effective and $\mu_{i,j}\le \mu_j$, all the inequalities are equalities and $v_i=v$, a contradiction.

\end{proof}

%%%%%%%%%%%%%%%%%

\subsection{DCC of volumes for $\mathcal{G}_{glc}(d,\Phi)$}

\begin{proof}(of Theorem \ref{t-dcc-vol-gen-pairs})
\emph{Step 1.}
Assume the theorem does not hold. Then there is a sequence $(X_i,B_i+M_i)$ of generalised pairs 
in  $\mathcal{G}_{glc}(d,\Phi)$ with data $X_i'\to X_i$ and $M'=\sum \mu_{i,j}M_{i,j}'$ 
so that the volumes 
$$
v_i=\vol(K_{X_i}+B_i+M_i)
$$ 
form a strictly decreasing sequence of numbers. Replacing $(X,B+M)$ with a $\Q$-factorial 
dlt model, we can assume $X$ is $\Q$-factorial. We can discard any $M_{i,j}'\equiv 0$.\\

\emph{Step 2.}
By Theorem \ref{t-descent-nef-divs-to-bnd-models} and its proof,
there exists a bounded set of couples $\mathcal{P}$ depending only on $d,\Phi, \lim v_i$ such that for each $i$, 
there exist a log smooth couple $(\overline{X}_i,\overline{\Sigma}_i)\in \mathcal{P}$ and a birational map 
$\psi_i\colon X_i\bir \overline{X}_i$ satisfying the following: 
\begin{itemize}
\item $\overline{\Sigma}_i$ contains the exceptional divisors of $\psi_i^{-1}$ and the support of the birational 
transform of $B_i$, 

\item each $M_{i,j}'$ descends to $\overline{X}_i$, say as $\overline{M}_{i,j}$, and 

\item $\overline{A}_i-\sum \mu_{i,j}\overline{M}_{i,j}$ is pseudo-effective
 for some very ample divisor $\overline{A}_i\le \overline{\Sigma}_i$.
\end{itemize}
In particular, the number of the $M_{i,j}'$ is bounded from above. 

Replacing $X_i$ we can assume $\psi_i$ are morphisms. Moreover, we can assume that 
$X_i$ is obtained from $(\overline{X}_i,\overline{\Sigma}_i)$ by a sequence of toroidal 
smooth blowups; to achieve this we first let $Y_i\to \overline{X_i}$ 
be a toroidal birational contraction which extracts exactly the prime divisors on $X_i$ that are 
toroidal with respect to $(\overline{X}_i,\overline{\Sigma}_i)$; denoting $X_i\bir Y_i$ by $\lambda_i$ 
and letting $B_{Y_i}={\lambda_i}_*B_i$, we can see that $K_{X_i}+B_i\ge \lambda_i^*(K_{Y_i}+B_{Y_i})$, 
so we can replace $(X_i,B_i)$ with $(Y_i,B_{Y_i})$ preserving the volume $v_i$ in Step 1; next take a 
sequence of smooth blowups toroidal with respect to $(\overline{X}_i,\overline{\Sigma}_i)$ to get a 
model $Z_i$ so that $X_i\bir Z_i$ does not contract divisors, and then replace $X_i$ with $Z_i$ and 
replace $B_i$ with the reduced exceptional divisor of $Z_i\bir X_i$ plus the birational transform of $B_i$.
\\

\emph{Step 3.}
By the effective base point free theorem [\ref{kollar-ebpf}], there is $n\in \N$ depending only on $d$ such that 
$$
|n(K_{\overline{X}_i}+3d\overline{A}_i+\overline{M}_{i,j})|
$$ 
and 
$$
|n(K_{\overline{X}_i}+3d\overline{A}_i+2\overline{M}_{i,j})|
$$
are base point free for each $i,j$. Therefore, we can write $n\overline{M}_{i,j}\sim \overline{P}_{i,j}-\overline{Q}_{i,j}$ 
where $\overline{Q}_{i,j},\overline{P}_{i,j}$ are general members of the above linear systems, respectively. 
Since $\deg_{\overline{A}_i}\overline{Q}_{i,j}$ and $\deg_{\overline{A}_i}\overline{P}_{i,j}$ are bounded from above, 
extending $\overline{\Sigma}_i$, we can assume that $\overline{P}_{i,j}+\overline{Q}_{i,j}\le \overline{\Sigma}_{i}$.\\

\emph{Step 4.}
Replacing the sequence in Step 1 with a subsequence, 
we can assume that there is a log smooth couple $(\overline{V},\overline{\Sigma})$ and a smooth projective morphism 
$\overline{V}\to T$ onto a smooth variety such that for each $i$, 
there is a closed point $t_i\in T$ so that we can identify 
$\overline{X}_i$ with $\overline{V}_{t_i}$ and that $\overline{\Sigma}_i\le \overline{\Sigma}_{t_i}$ 
where the subscript $t_i$ means fibre over $t_i$, and that $\{t_i\}$ is dense in $T$. Replacing 
$\overline{\Sigma}_i$ we can assume we have $\overline{\Sigma}_i=\overline{\Sigma}_{t_i}$. 
After a finite base change and possibly shrinking $T$ 
we can assume that $(\overline{V},\overline{\Sigma})$ is log smooth over $T$ with irreducible fibres 
i.e. $I\to T$ is smooth with irreducible fibres if $I=\overline{V}$ or if $I$ is a stratum of 
$(\overline{V},\overline{\Sigma})$. In particular, there is a 1-1 correspondence between the 
strata of $(\overline{V},\overline{\Sigma})$ and the strata of $(\overline{X}_i,\overline{\Sigma}_i)$, for each $i$.

Thus, by the previous step, for each $i,j$, there exist irreducible components $E,F$ of $\overline{\Sigma}$ 
such that $E|_{\overline{X}_i}=\overline{Q}_{i,j}$ and $F|_{\overline{X}_i}=\overline{P}_{i,j}$, hence 
$(F-E)|_{\overline{X}_i}\sim n\overline{M}_{i,j}$. Therefore, passing to a subsequence, we can assume that 
there exist Cartier divisors $n\overline{M}_j$ on $\overline{V}$ such that $n\overline{M}_j|_{X_i}\sim n\overline{M}_{i,j}$ 
for each $i,j$.\\

\emph{Step 5.}
Fixing $i$, we construct a model $W_i$ over $\overline{X}_1$. 
Recall that $X_i\to \overline{X}_i$ is a sequence of smooth blowups toroidal with respect to 
$(\overline{X}_i,\Sigma_i)$. This induces a sequence $\alpha\colon {W}\to \overline{V}$ 
of smooth blowups toroidal with respect to $(\overline{V},\overline{\Sigma})$.
Moreover, there is a boundary $\Gamma$ on $W$ supported in 
the birational transform of $\overline{\Sigma}$ plus the exceptional divisors of $\alpha$ and with 
coefficients in $\Phi$ such that 
$\Gamma|_{X_i}=B_i$. Let $W_i$ be the fibre of ${W}\to T$ over $t_1$. Then 
we get an induced birational morphism $\beta_i\colon W_i\to \overline{X}_1$.

Define $\Gamma_i=\Gamma|_{W_i}$ and 
$$
N_i=\beta_i^*\sum \mu_{i,j}\overline{M}_{1,j}\sim_\R (\alpha^*\sum \mu_{i,j}\overline{M}_{j})|_{W_i}.
$$\

\emph{Step 6.}
We claim that 
$$
v_i=\vol(K_{X_i}+B_i+M_i)=\vol(K_{W_i}+\Gamma_i+N_i).
$$
Let ${H}$ be an ample$/T$ divisor on $W$. Since both 
$$
M_i=(\alpha^*\sum \mu_{i,j}\overline{M}_{j})|_{X_i} ~~\ ~~\mbox{and}~~\ ~~ 
N_i=(\alpha^*\sum \mu_{i,j}\overline{M}_{j})|_{W_i}
$$ 
are nef, for each $l\in\N$ the divisors 
$M_i+\frac{1}{l}H|_{X_i}$ and $N_i+\frac{1}{l}H|_{W_i}$ are ample, so we can find a 
common open neighbourhood $U_l$ of $t_i,t_1$ such that $\alpha^*\sum \mu_{i,j}\overline{M}_{j}+\frac{1}{l}H$ 
is ample over $U_l$. Shrinking $U_l$, we can assume that over $U_l$ there is a boundary  
$$
\Theta_l\sim_\R \Gamma+\alpha^*\sum \mu_{i,j}\overline{M}_{j}+\frac{1}{l}H
$$  
so that $(W,\Theta_l)$ is lc and log smooth over $U_l$. Then by [\ref{HMX1}, Theorem 1.8], 
$$
\vol(K_{X_i}+B_i+M_i+\frac{1}{l}H|_{X_i})=\vol((K_W+\Theta_l)|_{X_i})
$$
$$
=\vol((K_W+\Theta_l)|_{W_i})=\vol(K_{W_i}+\Gamma_i+N_i+\frac{1}{l}H|_{W_i}). 
$$
Note that $\Theta_l$ may not be a $\Q$-divisor but we can still use [\ref{HMX1}, Theorem 1.8] 
by approximating with $\Q$-divisors as the volume is a continuous function on the numerical classes of divisors. 
Taking the limit when $l\to \infty$ we get the claim stated at the beginning of this step. 

Finally, we get a contradiction by applying Proposition \ref{p-dcc-vol-gen-pair-fixed-base} 
to the generalised pairs $({W_i},\Gamma_i+N_i)$ which are equipped with the birational morphisms 
$W_i\to \overline{X}_1$.

\end{proof}

%%%%%%%%%%%%%%%%%
%%%%%%%%%%%%%%%%%%%%%

\section{\bf Bounded birational models for $\mathcal{G}_{glc}(d,\Phi,v)$}

In this section we prove a stronger version of Theorem \ref{t-descent-nef-divs-to-bnd-models} for the 
generalised pairs in $\mathcal{G}_{glc}(d,\Phi,v)$, following [\ref{HMX3}, Lemmas 7.1 and 7.2]. 
This is used in later sections and its proof uses ideas similar to 
the proof of Theorem \ref{t-dcc-vol-gen-pairs}.

\begin{lem}\label{l-bir-bnd-model-for-fixed-vol-fixed-base}
Let $d\in \N$, $\Phi\subset \R^{\ge 0}$ be a DCC set, and $v\in \R^{>0}$. 
Let $(Y,\Delta)$ be a projective log smooth toroidal pair and 
$N_1,\dots,N_q$ be nef Cartier divisors on $Y$.  Consider
the set $\mathcal{H}$ of 
$$
(X,B+M)\in \mathcal{G}_{glc}(d,\Phi,v)
$$ 
with data $X'\overset{\phi}\to X$ and $M'=\sum \mu_iM_i'$, admiting a birational morphism $\psi \colon X\to Y$ 
with $\psi_*B\le \Delta$ and $M_i'=\psi^*N_i$. 

Then there exists a sequence of smooth blowups $\rho \colon U \to Y$ toroidal with respect to $(Y,\Delta)$ 
such that for each 
$
(X,B+M)\in \mathcal{H},
$
\begin{enumerate}
\item $\Theta\ge {\Gamma}$ where $K_U+\Theta=\rho^*(K_Y+\Delta)$, and 
${\Gamma}$ is the sum of the exceptional divisors of $U\bir X$ plus the birational transform of $B$, and 

\item letting $N=\sum \mu_i\rho^* N_i$, we have 
$$
\vol(K_{U}+\Gamma+N)=v.
$$
\end{enumerate}
\end{lem}
\begin{proof}
Consider the set $\mathcal{E}$ of 
$$
(U,\Gamma+N)\in \mathcal{G}_{glc}(d,\Phi)
$$ 
with data $U'\overset{\phi}\to U$ and $N'=\sum \mu_iN_i'$, admiting a birational morphism $\rho \colon U\to Y$ 
with $\rho_*\Gamma\le \Delta$ and $N_i'=\phi^*\rho^*N_i$. 
By Proposition \ref{p-dcc-vol-gen-pair-fixed-base}, the set 
$$
\{\vol(K_U+\Gamma+N) \mid (U,\Gamma+N)\in \mathcal{E}\}
$$
satisfies DCC, hence there is $\delta>0$ such that no member of this set belongs to $(v,v+\delta)$. 
On the other hand, by [\ref{BZh}, Theorem 8.1], there is $e\in (0,1)$ such that $K_U+e\Gamma+N$ is big 
for any $(U,\Gamma+N)\in \mathcal{E}$.

Let $\epsilon\in \R^{>0}$ so that $(1-\epsilon)^d>\frac{v}{v+\delta}$ and let $a=(1-\epsilon)+\epsilon e$. 
Then for any $(U,\Gamma+N)\in \mathcal{E}$, we have  
$$
\vol(K_U+a\Gamma+N)=\vol((1-\epsilon)(K_U+\Gamma+N)+\epsilon (K_U+e\Gamma+N))
$$
$$
\ge (1-\epsilon)^d\vol(K_U+\Gamma+N).
$$
 
Now since $a<1$, $(Y,a\Delta)$ is klt, so there is a sequence of smooth blowups 
$\rho \colon U \to Y$ toroidal with respect to $(Y,\Delta)$
 so that we can write  
$$
K_U+\Psi=\rho^*(K_Y+a\Delta)+E
$$
where $\Psi,E\ge 0$ have no common components and $(U,\Psi)$ has terminal singularities in codimension $\ge 2$. 
Let $K_U+\Theta$ be the pullback of $K_Y+\Delta$.
We will show that $(U,\Theta)$ satisfies the properties of the lemma. 

Pick $(X,B+M)\in \mathcal{H}$. We can 
replace $X$ hence assume $\pi\colon X\bir U$ is a morphism. Let $\Gamma:=\pi_*B$ and $N:=\pi_*M$.  
Let $\Xi$ be the largest divisor satisfying $\Xi\le a\Gamma$ and $\Xi\le \Psi$. Then 
$$
\vol(K_U+a\Gamma+N)
=\vol(K_U+\Xi+N)
$$
where we use $\rho_*a\Gamma\le a\Delta$ together with either [\ref{HMX1}, Lemma 5.3(2)] adapted 
to our situation or running an MMP on $K_U+a\Gamma$ over $Y$ to get a minimal model $U'$ and then pulling back 
$K_{U'}+a\Gamma'$ to $U$. On the other hand, 
$$
\vol(K_U+\Xi+N)\le \vol(K_X+\Xi^\sim+M)\le v
$$
where we use the fact that $(U,\Xi)$ 
has terminal singularities in codimension $\ge 2$ and that $\Xi^\sim \le aB$. Here $\Xi^\sim$ denotes 
the birational transform of $\Xi$.
 
Now observe that $(U,\Gamma+N)$ is a generalised pair with data $U':=X'\to U$ and $N':=M'$, 
actually $(U,\Gamma+N)\in \mathcal{E}$. Then by the above arguments we have  
$$
v=\vol(K_X+B+M)\le \vol(K_{U}+\Gamma+N)
$$
$$
\le \frac{1}{(1-\epsilon)^d}\vol(K_{U}+a\Gamma+N)\le \frac{v}{(1-\epsilon)^d} <v+\delta, 
$$
so we get $\vol(K_{U}+\Gamma+N)=v$ by our choice of $\delta$.

\end{proof}

\begin{prop}\label{p-bir-bnd-model-for-fixed-vol}
Let $d\in \N$, $\Phi\subset \R^{\ge 0}$ be a DCC set, and $v\in \R^{>0}$.
Then there exists a bounded set of couples $\mathcal{P}$ such that for each 
$$
(X,B+M)\in \mathcal{G}_{glc}(d,\Phi,v)
$$ 
with data $X'\overset{\phi}\to X$ and $M'=\sum \mu_iM_i'$, there is a log smooth couple $(\overline{X},\overline{\Sigma})\in \mathcal{P}$ and a birational map 
$\overline{X}\bir X$ such that 
\begin{itemize}
\item $\overline{\Sigma}\ge \overline{B}$ where $\overline{B}$ is the sum of the reduced exceptional 
divisor of $\overline{X}\bir X$ plus the birational transform of $B$,

\item each $M_i'$  descends to $\overline{X}$, say as $\overline{M}_i$,  and 

\item letting $\overline{M}=\sum \mu_i\overline{M}_i$, we have 
$$
\vol(K_{\overline{X}}+\overline{B}+\overline{M})=v.
$$
\end{itemize}
\end{prop}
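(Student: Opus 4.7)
The plan is to strengthen Theorem \ref{t-descent-nef-divs-to-bnd-models} to preserve the volume exactly by combining it with the more precise Proposition \ref{l-bir-bnd-model-for-fixed-vol-fixed-base} through a family construction modelled on the proof of Theorem \ref{t-dcc-vol-gen-pairs}. First I would apply Theorem \ref{t-descent-nef-divs-to-bnd-models} to any $(X,B+M)\in \mathcal{G}_{glc}(d,\Phi,v)$ to obtain a bounded set $\mathcal{P}_0$ of log smooth couples and a particular $(\overline{X}_0,\overline{\Sigma}_0)\in\mathcal{P}_0$ with a birational map $\overline{X}_0\bir X$ so that $\overline{\Sigma}_0$ contains the exceptional divisors and the support of the birational transform of $B$, and so that each $M_i'$ descends to $\overline{X}_0$ as some nef $\overline{M}_{0,i}$. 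Arguing by contradiction, assume the proposition fails and take a sequence $(X_i,B_i+M_i)$ witnessing this. Mimicking Steps 2 and 4 of the proof of Theorem \ref{t-dcc-vol-gen-pairs}, after replacing each $X_i$ by a suitable toroidal blowup of $\overline{X}_{0,i}$, passing to a subsequence, performing a finite base change, and shrinking, I place the $(\overline{X}_{0,i},\overline{\Sigma}_{0,i})$ as fibres of a single log smooth family $(\overline{V},\overline{\Sigma})\to T$ with irreducible strata, with $\{t_i\}$ dense in $T$. Following Steps 3 and 4 of that proof, the effective base point free theorem yields, after enlarging $\overline{\Sigma}$, Cartier divisors $\overline{M}_j$ on $\overline{V}$ with $\overline{M}_j|_{\overline{X}_{0,i}}\sim \overline{M}_{i,j}$ for all $i,j$.

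Next I would apply Proposition \ref{l-bir-bnd-model-for-fixed-vol-fixed-base} relatively over $T$. The constants $\delta,e,\epsilon,a$ appearing in its proof depend only on $d,\Phi,v$, and the toroidal smooth blowup $(U,\Theta)\to(Y,\Delta)$ it produces depends only on $(Y,a\Delta)$, being a toroidal resolution making $(U,\Psi)$ terminal in codimension $\ge 2$ where $K_U+\Psi=\rho^*(K_Y+a\Delta)+E$. Performing such a resolution relatively, and after possibly shrinking $T$ further, I obtain a sequence of toroidal smooth blowups $(\overline{U},\overline{\Theta})\to(\overline{V},\overline{\Sigma})$ along strata whose restriction to each fibre over $t_i$ yields a valid $(U_i,\Theta_i)$ in the sense of \ref{l-bir-bnd-model-for-fixed-vol-fixed-base} for $(\overline{X}_{0,i},\overline{\Sigma}_{0,i})$ together with the $\overline{M}_{i,j}$. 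Setting $(\overline{X}_i,\overline{\Sigma}_i):=(\overline{U}_{t_i},\overline{\Theta}_{t_i})$ and $\overline{M}_i:=\sum_j \mu_{i,j}(\overline{M}_j|_{\overline{X}_i})$, the fibres of $\overline{U}\to T$ form a bounded family $\mathcal{P}$; each $M_{i,j}'$ descends to $\overline{X}_i$ because $\overline{M}_{i,j}$ descends from $\overline{V}$; and the inclusion $\overline{\Sigma}_i\ge \overline{B}_i$ and the volume equality $\vol(K_{\overline{X}_i}+\overline{B}_i+\overline{M}_i)=v$ are exactly the two parts of the conclusion of \ref{l-bir-bnd-model-for-fixed-vol-fixed-base}. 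This contradicts the supposed failure and proves the proposition.

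The main obstacle is carrying out the toroidal blowup construction of \ref{l-bir-bnd-model-for-fixed-vol-fixed-base} uniformly over $T$, i.e.\ producing a single relative $(\overline{U},\overline{\Theta})\to(\overline{V},\overline{\Sigma})$ whose fibres simultaneously realise the required desingularisations. Because the toroidal structure of $(\overline{V},\overline{\Sigma})$ over $T$ is locally governed by constant fans and the blowup centres are determined by the discrepancies of $(Y,a\Delta)$ which vary combinatorially along $T$, a relative toroidal desingularisation suffices; the delicate point is the fibrewise bookkeeping of strata, which is the essential geometric input. A secondary but routine subtlety, already resolved in the proof of Theorem \ref{t-dcc-vol-gen-pairs}, is extending the nef divisors $\overline{M}_{i,j}$ across the family to Cartier divisors $\overline{M}_j$ on $\overline{V}$ whose restrictions are linearly (rather than merely numerically) equivalent to the $\overline{M}_{i,j}$ on each fibre.
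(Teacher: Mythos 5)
Your overall architecture matches the paper's: apply Theorem \ref{t-descent-nef-divs-to-bnd-models}, place the resulting bounded couples into a log smooth family $(\overline{V},\overline{\Sigma})\to T$ with irreducible strata and Cartier divisors $\overline{M}_j$ restricting to the nef parts, and then upgrade to the exact volume equality via Proposition \ref{l-bir-bnd-model-for-fixed-vol-fixed-base} and a relative toroidal blowup. The relative blowup itself is unproblematic for the reason you give (constant stratification along $T$), and the extension of the nef divisors across the family is indeed handled as in the proof of Theorem \ref{t-dcc-vol-gen-pairs}.

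The genuine gap is your assertion that the constants $\delta,e,\epsilon,a$ in the proof of Proposition \ref{l-bir-bnd-model-for-fixed-vol-fixed-base} "depend only on $d,\Phi,v$." As written there, $\delta$ is the gap above $v$ in the DCC set of volumes of the auxiliary family $\mathcal{E}$, which is attached to the \emph{fixed} base $(Y,\Delta)$ and the \emph{fixed} nef Cartier divisors $N_j$; applying the proposition fibre by fibre therefore produces a $\delta_t$ for each fibre, and nothing you have said rules out $\inf_t\delta_t=0$, which would destroy the uniform choice of $a$ and hence of the relative blowup. This can be repaired: either invoke Theorem \ref{t-dcc-vol-gen-pairs} itself (already proved at this point) to get a single $\delta$ valid for all of $\mathcal{G}_{glc}(d,\Phi\cup\{1\})$, or do what the paper does, namely apply Proposition \ref{l-bir-bnd-model-for-fixed-vol-fixed-base} only at one fixed fibre $t$, extend the resulting stratum blowups over all of $T$, and then verify the volume equality at an arbitrary fibre $s$ by transporting the pair $(X,B+M)$ sitting over $s$ to an auxiliary pair over the fixed fibre $t$ and back, using deformation invariance of volumes in log smooth families ([\ref{HMX1}, Theorem 1.8]). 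This transport step is absent from your proposal, and without it (or without the uniform-$\delta$ justification) the fibrewise application of the lemma does not close.
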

\begin{proof}
\emph{Step 1.}
It is enough to prove the proposition for pairs in an arbitrary subset 
$\mathcal{G}\subset \mathcal{G}_{glc}(d,\Phi,v)$. Applying Theorem \ref{t-descent-nef-divs-to-bnd-models}, 
there exists a bounded set of couples $\mathcal{P}$ such that for each 
$$
(X,B+M)\in \mathcal{G}
$$ 
with data $X'\overset{\phi}\to X$ and $M'=\sum \mu_iM_i'$, there is a log smooth couple $(\overline{X},\overline{\Sigma})\in \mathcal{P}$ and a birational map 
$\overline{X}\bir X$ such that 
\begin{enumerate}
\item $\overline{\Sigma}\ge \overline{B}$ where $\overline{B}$ is the sum of the reduced 
exceptional divisor of $\overline{X}\bir X$ plus the birational transform of $B$, and

\item each $M_i'$  descends to $\overline{X}$, say as $\overline{M}_i$.
\end{enumerate}

As in the proof of Theorem \ref{t-dcc-vol-gen-pairs}, perhaps after shrinking $\mathcal{G}$, 
we can assume that there is a couple $(\overline{V},\overline{\Lambda})$ log smooth over a smooth base  
$T$ such that for each 
$
(X,B+M)\in \mathcal{G}
$ 
 there is a closed point $t\in T$ so that we can identify the corresponding couple 
$(\overline{X},\overline{\Sigma})$ (as above) with $(\overline{V}_{t},\overline{\Lambda}_{t})$ 
where the subscript $t$ means fibre over $t$. Moreover, 
we can assume that the strata of $(\overline{V},\overline{\Lambda})$ are smooth over $T$ with irreducible fibres. 
In particular, there is a 1-1 correspondence between the 
strata of $(\overline{V},\overline{\Lambda})$ and the strata of $(\overline{X},\overline{\Sigma})$.
In addition, we can assume that 
there exist Cartier divisors $n\overline{N}_i$  on $\overline{V}$ and $n\in \N$ depending only on $\mathcal{G}$ such that $n\overline{N}_i|_{\overline{X}}\sim n\overline{M}_{i}$ for each $i$.\\

\emph{Step 2.}
Fix a closed point $t\in T$ and assume 
$(\overline{Y},\overline{\Delta}):=(\overline{V}_{t},\overline{\Sigma}_t)$ is in $\mathcal{P}$ and 
assume $\overline{M}_{i,\overline{Y}}:=\overline{N}_i|_{\overline{Y}}$ are nef Cartier divisors. 
Let $\mathcal{G}_t\subset \mathcal{G}$ consist of those pairs $(Y,B_Y+M_Y)$ 
with data $Y'\overset{\phi}\to Y$ and $M_Y'=\sum \mu_iM_{i,Y}'$ admitting a birational map
$Y\bir \overline{Y}$ such that $\overline{\Delta}$ contains the reduced exceptional divisor of $\overline{Y}\bir Y$ and 
the birational transform of $B_Y$, and such that $M_{i,Y}'$ 
descends to $\overline{Y}$ say as $\overline{M}_{i,\overline{Y}}$. Assume $\mathcal{G}_t\neq \emptyset$. 
Let $\mathcal{E}_t\subseteq \mathcal{G}_t$ consist of those pairs where $Y\bir \overline{Y}$ is a morphism.

Note that for each $(Y,B_Y+M_Y)$ in $\mathcal{G}_t$, we can assume $Y'\bir \overline{Y}$ is a morphism,
hence $(Y',B_{Y'}+M_{Y'})$ is in $\mathcal{E}_t$ where $B_{Y'}$ is the sum of the reduced exceptional 
divisor of $Y'\to Y$ and the birational transform of $B_Y$, and $M_{Y'}=M_Y'$. 
Thus applying Lemma \ref{l-bir-bnd-model-for-fixed-vol-fixed-base} to $\mathcal{E}_t$, 
there is a sequence of smooth blowups 
$
\rho\colon \overline{U}\to \overline{Y}
$
toroidal with respect to $(\overline{Y},\overline{\Delta})$ so that if 
$$
K_{\overline{U}}+\overline{\Theta}=\rho^*(K_{\overline{Y}}+\overline{\Delta}),
$$ 
then we have the following: 
for any 
$
(Y,B_Y+M_Y)\in \mathcal{G}_t
$ 
with data $Y'\overset{\phi}\to Y$ and $M_Y'=\sum \mu_iM_{i,Y}'$,  
\begin{itemize}
\item $\overline{\Theta}\ge \overline{\Gamma}$ where $\overline{\Gamma}$ is the sum of the 
reduced exceptional divisor of 
$\overline{U}\bir Y$ plus the birational transform of $B_Y$, and 

\item letting $\overline{L}=\sum \mu_i\rho^*\overline{M}_{i,\overline{Y}}$, we have 
$$
\vol(K_{\overline{U}}+\overline{\Gamma}+\overline{L})=v.
$$

\end{itemize}

The morphism $\rho$ induces a sequence of smooth blowups 
$\overline{W}\to \overline{V}$ toroidal with respect to $(\overline{V},\overline{\Lambda})$. 
Let $K_{\overline{W}}+\overline{\Omega}$ be the pullback of $K_{\overline{V}}+\overline{\Lambda}$. Replacing 
$(\overline{Y},\overline{\Delta})$ with $(\overline{U},\overline{\Theta})$ and 
$(\overline{V},\overline{\Lambda})$ with $(\overline{W},\overline{\Omega})$ and replacing $\mathcal{P}$ 
 accordingly, we can assume that for the elements $(Y,B_Y+M_Y)$ of $\mathcal{G}_t$ 
as above, we have: if $\overline{\Gamma}$ is the sum of the reduced exceptional divisor of 
$\overline{Y}\bir Y$ plus the birational transform of $B_Y$ and if 
$\overline{L}=\sum \mu_i\overline{M}_{i,\overline{Y}}$, then
$$
\vol(K_{\overline{Y}}+\overline{\Gamma}+\overline{L})=v.
$$\

\emph{Step 3.}
Now pick any $(X,B+M)\in \mathcal{G}$ with data $X'\overset{\phi}\to X$ and $M'=\sum \mu_iM_{i}'$. 
Then there is a closed point $s\in T$ such that 
the couple in $\mathcal{P}$ correpsponding to $(X,B+M)$ is 
$(\overline{X},\overline{\Sigma}):=(\overline{V}_{s},\overline{\Lambda}_s)$. 
Arguing as in Step 2 of the proof of Theorem \ref{t-dcc-vol-gen-pairs},  
we can replace $X$ hence assume $\pi\colon X\bir \overline{X}$ is a 
sequence of smooth blowups toroidal with respect to 
$(\overline{X},\overline{\Sigma})$. This induces a similar sequence 
${Z}\to \overline{V}$ toroidal with respect to $(\overline{V},\overline{\Lambda})$. 

There is a boundary $C$ on $Z$ such that $(X,B)=(Z_s,C_s)$. Let 
$(Y,B_Y):=(Z_t,C_t)$ and $Y'=Y$, where $t$ is fixed and $\overline{Y}=\overline{V}_t$ as in Step 2. 
Let $\psi$ denote $Y\to \overline{Y}$ and let $M_{i,Y}'=\psi^*\overline{M}_{i,\overline{Y}}$. 
Then $(Y,B_Y+M_Y)\in \mathcal{G}_t$ is a generalised pair with data $Y'=Y$ and $M_Y=\sum \mu_i M_{i,Y}'$ 
because the coefficients of $B_Y$ and the $\mu_i$ are in $\Phi$ and because    
$$
\vol(K_Y+B_Y+M_Y)=\vol(K_X+B+M)=v
$$
where the first equality can be seen as in Step 6 of the proof of Theorem \ref{t-dcc-vol-gen-pairs}.
 
But then by Step 2, letting $\overline{L}=\sum \mu_i\overline{M}_{i,\overline{Y}}=\psi_*M_Y$ and 
letting $\overline{\Gamma}=\psi_*B_Y$, we have 
$$
\vol(K_{\overline{Y}}+\overline{\Gamma}+\overline{L})=v.
$$
This in turn implies that 
$$
\vol(K_{\overline{X}}+\overline{B}+\overline{M})=v
$$
where $\overline{B},\overline{M}$ are the pushdowns of $B,M$, arguing as in 
Step 6 of the proof of Theorem \ref{t-dcc-vol-gen-pairs}.

\end{proof}

%%%%%%%%%%%%%%%%%%%%%%%%%%%%%%%%
%%%%%%%%%%%%%%%%%%%%%%%%%%%%%%%%

\section{\bf Boundedness of generalised pairs}

In this section we prove the main results on boundedness of generalised pairs, that is, Theorems 
\ref{t-bnd-gen-pairs-vol=v} and \ref{t-disc-for-F-glc}. First we discuss how volume changes 
after pulling back a generalised log divisor under a birational contraction and then removing a  
multiple of a prime divisor. Next we treat \ref{t-disc-for-F-glc} on log discrepancies which is important 
for proving \ref{t-bnd-gen-pairs-vol=v} and also for the boundedness results on stable  
log minimal models. In the end of the section we prove \ref{t-bnd-gen-pairs-vol=v}.

%%%%%%%%%%
\subsection{Decrease of volume}

\begin{lem}\label{l-vol-blowup-minus-exc}
Let $X$ be a normal projective variety and $\phi\colon Y\to X$ be the blowup of $X$ at a 
closed point $x\in X$, and let $E\ge 0$ be a $\Q$-Cartier divisor whose support contains $\phi^{-1}\{x\}$. 
Let $A$ be an ample $\R$-divisor on $X$.
Then $\vol(\phi^*A-tE)<\vol(A)$ for any $t>0$.
\end{lem}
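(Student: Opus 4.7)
The plan is to compute $\vol(\phi^*A - tE)$ exactly for sufficiently small $t>0$, and then extend to arbitrary $t>0$ using monotonicity of the volume. First I would note that $\phi^*A$ is nef and big with $(\phi^*A)^d = \vol(A)$. Since $-E$ is $\phi$-ample (as $E$ is the exceptional divisor of a blowup of a smooth point) and $A$ is ample on $X$, a standard argument shows that $\phi^*A - tE$ is ample on $Y$ for all sufficiently small $t>0$, so its volume equals the top self-intersection.

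Expanding by the binomial formula,
$$(\phi^*A - tE)^d \;=\; \sum_{k=0}^{d} \binom{d}{k}(-t)^k (\phi^*A)^{d-k}\cdot E^{k}.$$
For $1\le k\le d-1$ the mixed term $(\phi^*A)^{d-k}\cdot E^{k}$ vanishes: restricting to $E\cong \mathbb{P}^{d-1}$, the divisor $\phi^*A|_E$ is the pullback of $A$ along the contraction $E\to\{x\}$ and hence is numerically trivial on $E$, so $(\phi^*A|_E)^{d-k}\cdot (E|_E)^{k-1}=0$ whenever $d-k\ge 1$. The top term uses $\mathcal{O}_E(E)\cong \mathcal{O}_{\mathbb{P}^{d-1}}(-1)$, giving $E^{d}=(E|_E)^{d-1}=(-1)^{d-1}$. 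Therefore
$$(\phi^*A - tE)^{d} \;=\; A^{d} + (-t)^{d}(-1)^{d-1} \;=\; A^{d} - t^{d},$$
so for such small $t>0$ we conclude $\vol(\phi^*A - tE) = A^{d} - t^{d} < A^{d} = \vol(A)$.

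For arbitrary $t>0$, choose $t_{0}\in (0,t]$ small enough that the above computation applies. Then
$$\phi^*A - tE \;=\; (\phi^*A - t_{0}E) - (t-t_{0})E,$$
where $(t-t_{0})E\ge 0$, so monotonicity of the volume under subtraction of an effective divisor yields
$$\vol(\phi^*A - tE) \;\le\; \vol(\phi^*A - t_{0}E) \;=\; A^{d} - t_{0}^{d} \;<\; A^{d} \;=\; \vol(A),$$
which is what we wanted. The only mildly technical point is the ampleness (equivalently, nefness and bigness) of $\phi^*A - tE$ for small $t>0$; the remainder is essentially the binomial identity $(\phi^*A - tE)^d = A^d - t^d$ combined with the elementary monotonicity of $\vol$.
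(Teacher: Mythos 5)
Your proof is correct and follows essentially the same route as the paper: reduce to small $t>0$ by monotonicity of the volume, note $\phi^*A-tE$ is then ample so the volume is the top self-intersection, and observe that the binomial expansion collapses to $A^d+(-t)^dE^d=A^d-t^d<A^d$ since $-E|_E$ is ample on $E\cong\PP^{d-1}$. The paper states the reduction to small $t$ and the vanishing of the mixed terms without spelling them out, but the substance is identical.
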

\begin{proof}
First we treat the case when $x$ is smooth and $E$ is the exceptional divisor.
It is enough to treat the case when $t$ is sufficiently small. In this case, $\phi^*A-tE$ is 
ample. Thus letting $d=\dim X$ and using the fact that $E$ is contracted to a point we have 
$$
\vol(\phi^*A-tE)=(\phi^*A-tE)^d=(\phi^*A)^d+(-tE)^d<A^d=\vol(A)
$$
because  
$$
(\phi^*A)^i\cdot (-E)^{d-i}=-(\phi^*A|_E)^i\cdot (-E|_E)^{d-i-1}=0
$$ 
for each $0<i<d$ and because $-E|_E$ is ample which ensures 
$$
(-E)^d=-(-E|_E)^{d-1}<0.
$$ 

Now we prove the general case. There is a Cartier divisor $G\ge 0$ 
on $Y$ mapping to $x$ such that $-G$ is ample over $X$. Then 
$\phi^*A-sG$ is ample for a sufficiently small $s$. We claim that 
$$
\vol(\phi^*A-sG)<\vol(A).
$$
To see this let $g\colon \tilde{Y}\to Y$ be the normalisation of $Y$ and 
let $\tilde{y}$ be a smooth point on the inverse image of $G$. 
Next let $\psi\colon U\to \tilde{Y}$ be the blowup 
of $\tilde{Y}$ at $\tilde{y}$ with exceptional divisor $S$. Then by the above arguments, 
$$
\vol(\psi^*g^*(\phi^*A-s'G)-tS)<\vol(g^*(\phi^*A-s'G))
$$
for any $t>0$ where $s'\in (0,s)$. But then there is $t>0$ such that 
$$
\vol(\phi^*A-sG)=\vol(g^*(\phi^*A-s'G-(s-s')G))
$$
$$
\le \vol(\psi^*g^*(\phi^*A-s'G)-tS)<\vol(g^*(\phi^*A-s'G))\le \vol(A).
$$

Finally, since $\Supp E$ contains the fibre of $\phi$ over $x$, it contains $G$, 
so $lE\ge G$ for some $l>0$. Therefore, 
$\vol(\phi^*A-tE)<\vol(A)$ for any $t>0$.

\end{proof}

\begin{lem}\label{l-vol-glc-model-minus-effect-div}
Assume that 
\begin{enumerate}
\item $(X,B+M)$ is a projective generalised lc pair with data $X'\overset{\phi}\to X$ and $M'$, 
and with ample $K_X+B+M$, 

\item $\rho\colon Y\bir X$ is a birational map, $\rho^{-1}$ not contracting any divisor, 

\item $(Y,\Gamma_Y+M_Y)$ is a projective generalised lc pair with data $X'\overset{\psi}\to Y$ and $M'$,

\item $\rho_*\Gamma_Y\le B$, and 

\item $D$ is a $\Q$-Cartier prime divisor on $Y$ with 
$$
\mu_D\Gamma_Y\le 1-a(D,X,B+M)\neq 0.
$$
\end{enumerate}
Then  
$$
\vol(K_Y+\Gamma_Y+M_Y-tD)<\vol(K_X+B+M)
$$
for any $t>0$.
\end{lem}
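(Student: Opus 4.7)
The plan is to pass to a common resolution dominating both $X$ and $Y$ and to compare the pullbacks. Take $X'\overset{\phi}\to X$ and $X'\overset{\psi}\to Y$ (enlarging $X'$ if necessary) and write $\phi^*(K_X+B+M)=K_{X'}+B'+M'$ and $\psi^*(K_Y+\Gamma_Y+M_Y)=K_{X'}+\Gamma'+M'$. Decompose $\Gamma'-B'=P'-N'$ into positive and negative parts (with $P',N'\ge 0$ having disjoint support). Since $\rho^{-1}$ contracts no divisors, every non-$\phi$-exceptional prime divisor $E$ on $X'$ is also non-$\psi$-exceptional and is the birational transform of matching prime divisors $F\subset X$, $G\subset Y$ with $\rho(G)=F$; combined with hypothesis~(4) this forces $\mu_E(\Gamma'-B')\le 0$, so $P'$ is $\phi$-exceptional. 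Consequently
$$\psi^*(K_Y+\Gamma_Y+M_Y-tD)\le\phi^*(K_X+B+M)+P',$$
and since $P'$ is $\phi$-exceptional effective and $K_X+B+M$ is big, one has $\vol(\phi^*(K_X+B+M)+P')=\vol(K_X+B+M)$. Monotonicity of volume then yields the weak inequality $\vol(K_Y+\Gamma_Y+M_Y-tD)\le\vol(K_X+B+M)$.

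To upgrade this to a strict inequality, the next step is a pushdown--volume argument. The injection $\phi_*\mathcal{O}_{X'}(\lfloor mL'\rfloor)\hookrightarrow\mathcal{O}_X(\lfloor m\phi_*L'\rfloor)$ gives $\vol_{X'}(L')\le\vol_X(\phi_*L')$ for any $L'$. Applied to $L'=\psi^*(K_Y+\Gamma_Y+M_Y-tD)$, whose pushdown equals
$$K_X+\rho_*\Gamma_Y+M-t\rho_*D=K_X+B+M-\bigl[(B-\rho_*\Gamma_Y)+t\rho_*D\bigr],$$
one obtains $\vol(K_Y+\Gamma_Y+M_Y-tD)\le\vol\bigl(K_X+B+M-[(B-\rho_*\Gamma_Y)+t\rho_*D]\bigr)$. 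Since $K_X+B+M$ is ample, subtracting from it any nonzero effective divisor strictly decreases volume, so strict inequality is immediate whenever either $D$ is not $\rho$-contracted (so $\rho_*D\ne 0$) or $\rho_*\Gamma_Y\lneq B$.

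The main obstacle is the remaining degenerate case where $D$ is $\rho$-contracted \emph{and} $\rho_*\Gamma_Y=B$ exactly as divisors on $X$. Here both $N'$ and $\psi^*D$ are $\phi$-exceptional and the pushdown above equals $K_X+B+M$, so the previous step yields only equality. Write $\psi^*(K_Y+\Gamma_Y+M_Y-tD)=\phi^*(K_X+B+M)+J$ with $J=P'-N'-t\psi^*D$ $\phi$-exceptional; the birational transform $D'$ of $D$ on $X'$ is $\phi$-exceptional, and hypothesis~(5) forces
$$\mu_{D'}J=\mu_D\Gamma_Y-(1-a(D,X,B+M))-t\le -t<0.$$
Thus $J$ has a strict negative coefficient along $D'$, whose centre $\phi(D')\subset X$ has codimension at least $2$. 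The required $\vol(\phi^*(K_X+B+M)+J)<\vol(K_X+B+M)$ then reduces to the natural extension of Lemma~\ref{l-vol-blowup-minus-exc}: for any birational morphism $\phi\colon X'\to X$, any ample $\R$-divisor $A$ on $X$, and any nonzero effective $\phi$-exceptional divisor $E$ on $X'$, one has $\vol(\phi^*A-E)<\vol(A)$. This is established by identifying $H^0(m(\phi^*A-E))=H^0(X,\mathcal{O}(mA)\otimes\phi_*\mathcal{O}_{X'}(-mE))$ asymptotically with the subspace of $H^0(mA)$ consisting of sections vanishing to order proportional to $m$ along $\phi(\Supp E)$; since $A$ is ample and $\phi(\Supp E)$ has codimension $\ge 2$ in $X$, this subspace has codimension $\Theta(m^d)$ with strictly positive leading coefficient, which produces the required strict drop in volume and completes the proof.
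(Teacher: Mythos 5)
Your setup and the reduction are sound: the weak inequality, the pushdown argument settling the cases where $\rho_*\Gamma_Y\lneq B$ or $D$ is not contracted over $X$, and the computation $\mu_{D'}J\le -t$ in the remaining case are all correct (and broadly parallel the paper's first steps, which instead build an auxiliary boundary $B'\ge\Gamma'$ and pass to a crepant $\Q$-factorial dlt model $X''$ of $(X,B+M)$). The problem is that everything then hinges on the ``natural extension of Lemma~\ref{l-vol-blowup-minus-exc}'' --- that $\vol(\phi^*A-E)<\vol(A)$ for $A$ ample and $E\ne 0$ effective and $\phi$-exceptional --- and your justification of it does not work. First, $H^0(m(\phi^*A-E))$ is the space of sections of $mA$ whose pullback satisfies the \emph{valuative} conditions $\ord_{E_i}(\phi^*\divi(s))\ge me_i$; this is not the same as vanishing to order proportional to $m$ along $\phi(\Supp E)$, and converting one into the other is exactly where the work lies (one direction is an Izumi-type inequality, which is delicate when $\phi(E_i)$ lies in $\Sing X$). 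Second, even granting that translation, counting the linear conditions imposed by high-order vanishing along a codimension $\ge 2$ locus gives an \emph{upper} bound on the codimension of the subspace, whereas you need a \emph{lower} bound of order $m^d$; asserting ``codimension $\Theta(m^d)$ with strictly positive leading coefficient'' is precisely the statement to be proved, not a consequence of the count. (A minor further point: in the degenerate case the divisor you subtract is $J=P'-N'-t\psi^*D$ with $P'$ possibly nonzero, so you need the slightly stronger statement $\vol(\phi^*A+P'-E)<\vol(A)$; this does reduce to the same valuative condition along $D'$, but it should be said.)

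This missing lemma is where the paper's proof puts its main idea, and it avoids Izumi entirely. After passing to the crepant dlt model $X''\to X$, one runs an MMP on $K_{X''}+B''+M''-tD''$ over $X$ to show that $D''$ contains the \emph{entire fibre} $\gamma^{-1}\{x\}$ over some closed point $x\in\gamma(D'')$ (if a fibre component $S\not\subset D''$ survived the MMP and met $D''$, then $-D''|_S$ would have to be pseudo-effective over $X$, which is absurd). One then blows up $X$ at $x$, producing a Cartier divisor $A\ge 0$ with $-A$ ample over $X$ and $l\alpha^*D''\ge\beta^*A$ on a common resolution, and the strict drop $\vol(\pi^*(K_X+B+M)-sA)<\vol(K_X+B+M)$ follows from the elementary smooth-point computation of Lemma~\ref{l-vol-blowup-minus-exc} applied on the normalisation of the blowup. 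So to repair your argument you must either supply a genuine proof of your general exceptional-divisor volume lemma (e.g.\ via an Izumi inequality at a suitable smooth point together with the point-blowup computation, taking care when the centre lies in the singular locus), or replace it by a geometric argument of the fibre-containment type; as written, the crucial step is asserted rather than proved.
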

\begin{proof}
Let $\Gamma'$ on $X'$ be the birational transform of 
$\Gamma_Y$ plus the exceptional divisors of $\psi$, and let $D'$ be the birational transform of $D$. Then 
we can write 
$$
K_{X'}+\Gamma'+M'=\psi^*(K_Y+\Gamma_Y+M_Y)+F'
$$
where $F'$ is effective and exceptional over $Y$. Moreover, $G':=\psi^*D-D'$ is effective 
and exceptional over $Y$. Thus 
$$
\vol(K_{X'}+\Gamma'+M'-tD')=\vol(\psi^*(K_Y+\Gamma_Y+M_Y)+F'+tG'-t\psi^*D)
$$
$$
=\vol(K_Y+\Gamma_Y+M_Y-tD)
$$
for any $t\ge 0$. 

Let $B'$ be the boundary on $X'$ such that its coefficient in $D'$ is $1-a(D,X,B+M)$, 
its coefficient in any exceptional$/X$ prime divisor other than $D'$ is $1$, and 
its coefficient in any non-exceptional prime divisor is the same as the coefficient in the 
birational transform $B^\sim$. Note that if $D'$ is not exceptional over $X$, then 
the coefficient of $D'$ in $B'$ is the same as its coefficient in the birational 
transform $B^\sim$. Also note that $\mu_{D'}B'>0$, by (5).

Now $B'\ge \Gamma'$: indeed, let $T'$ be a prime divisor on $X'$; 
if $T'$ is not exceptional over $X'$, then $\mu_{T'}B'=\mu_{T'}B^\sim\ge \mu_{T'}\Gamma'$ by (4); 
if $T'=D'$, then  $\mu_{T'}B'\ge \mu_{T'}\Gamma'$ by (5); otherwise $T'$ is exceptional over $X$ and 
$\mu_{T'}B'=1\ge \mu_{T'}\Gamma'$ clearly holds.

By definition of $B'$, we can write 
$$
K_{X'}+B'+M'=\phi^*(K_X+B+M)+E'
$$
where $E'$ is effective and exceptional over $X$. Running an MMP on $K_{X'}+B'+M'$ 
over $X$ with scaling of some ample divisor contracts $E'$ and ends with a $\Q$-factorial generalised 
dlt pair $(X'',B''+M'')$ with $K_{X''}+B''+M''$ being the pullback of $K_X+B+M$. 
Then  
$$
\vol(K_{X'}+\Gamma'+M'-tD')\le \vol(K_{X''}+\Gamma''+M''-tD'')
$$
$$
\le\vol(K_{X''}+B''+M''-tD''),
$$
hence it is enough to verify
$$
\vol(K_{X''}+B''+M''-tD'')<\vol(K_X+B+M)
$$
for $t>0$. 

Pick a closed point $x\in \gamma(D'')$ and let $V=\gamma^{-1}\{x\}$ where $\gamma$ denotes $X''\to X$. 
Pick a small $t>0$. Run an MMP on $K_{X''}+B''+M''-tD''$   
over $X$ with scaling of some ample divisor. 
Since the MMP is a $-D''$-MMP over $X$, if $X_i''\bir X_{i+1}''$ is a step of the MMP 
and if $S_{i+1}\subset X_{i+1}''$ is a subvariety not contained in $D_{i+1}''$, then 
$S_{i+1}$ has a birational transform $S_{i}\subset X_{i}''$ not contained in $D_i''$.   
Thus replacing $X''$ with some $X_i''$ for $i\gg 0$, we can assume that the irreducible 
components of $V$ not contained in $D''$ stabilise, that is, they are not contained in the 
exeptional locus of any step of the MMP beyond $X''$. Thus by definition of MMP with scaling, 
if $S$ is a component of $V$ not contained in $D''$, then $-D''|_S$ 
is pseudo-effective. This implies that $V\subset D''$ otherwise we can pick $S$ so that 
it intersects $D''$ in which case $-D''|_S$ cannot be pseudo-effective. 

Now let $\pi\colon Z\to X$ be the blowup of $X$ at $x$. There is a Cartier divisor $P\ge 0$ 
on $Z$ mapping to $x$ such that $-P$ is ample over $X$. By Lemma \ref{l-vol-blowup-minus-exc},   
$$
\vol(\pi^*(K_X+B+M)-sP)<\vol(K_X+B+M)
$$
for any $s>0$.

Let $\alpha\colon W\to X''$ and $\beta \colon W\to Z$ 
be a common resolution. Since $D''$ contains the fibre of $X''\to X$ over $x$ and since $P$ 
is contracted to $x$, we have $l\alpha^*D''\ge \beta^* P$ for some $l\in \N$. But then 
$$
\vol(K_{X''}+B''+M''-tD'')=\vol(\alpha^*(K_{X''}+B''+M''-tD''))
$$
$$
\le \vol(\beta^*\pi^*(K_X+B+M)-\frac{t}{l}\beta^*P)= \vol(\pi^*(K_X+B+M)-\frac{t}{l}P)
$$
$$
<\vol(K_X+B+M)
$$ 
for $t>0$.

\end{proof}

%%%%%%%%%%%%%%%%%

\subsection{Log discrepancies of $\mathcal{F}_{glc}(d,\Phi,v)$}
In this subsection we prove Theorem \ref{t-disc-for-F-glc} which says that the 
log discrepancies of the generalised pairs in $\mathcal{F}_{glc}(d,\phi,v)$ not exceeding $1$ 
form a finite set. In particular, this implies that such log discrepancies are 
zero  if they are sufficiently small.

\begin{lem}\label{l-disc-fixed-pair}
Let $(X,B+M)$ be a generalised lc pair with data $X'\overset{\phi}\to X$ and $M'=\sum_1^p \mu_{i}M_{i}'$ 
where $M_i'$ are Cartier and nef$/X$ and $\mu_i\in \R^{\ge 0}$. Then for each $a\in \R^{\ge 0}$, 
the set 
$$
\{a(D,X,B+M)\le a\mid \mbox{$D$ prime divisor over $X$}\}
$$
is finite. 
\end{lem}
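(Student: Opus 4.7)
The plan is to reduce to the classical finiteness of log discrepancies for log smooth sub-lc pairs by absorbing the nef part into a sub-boundary on a sufficiently high model.

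First I would replace $\phi\colon X'\to X$ by a higher birational model so that $\phi$ is a log resolution of $(X,B)$ and each $M_i'$ is still a nef Cartier divisor on $X'$ (this is possible since the $M_i'$ are $\R$-Cartier nef). Writing
$$K_{X'}+B'+M'=\phi^*(K_X+B+M),$$
the sub-pair $(X',B')$ is log smooth and sub-lc (by the generalised lc assumption), and the coefficients of $B'$ together with $0$ form a finite set $\{0,b_1,\ldots,b_n\}\subset(-\infty,1]$.

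Next I would observe that the generalised datum no longer affects log discrepancies measured against divisors over $X'$. For any prime divisor $D$ over $X'$, pick a log resolution $\pi\colon W\to X'$ extracting $D$; pulling back the identity above via $\pi$ gives
$$\pi^*(K_{X'}+B')=K_W+B_W,$$
since the nef contribution on both sides is $\pi^*M'$. Hence $a(D,X,B+M)=1-\mu_DB_W=a(D,X',B')$, and it suffices to bound the ordinary log discrepancies of the log smooth sub-lc pair $(X',B')$.

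For this, I would introduce the sub-semigroup
$$\Gamma:=\Z_{\ge 0}\cdot 1+\sum_{i=1}^n\Z_{\ge 0}\cdot(1-b_i)\subset\R_{\ge 0}$$
and show by induction on the length of a smooth blowup sequence extracting $D$ that $a(D,X',B')\in\Gamma$. The base case $D\subset X'$ gives $a(D,X',B')\in\{1,1-b_1,\ldots,1-b_n\}\subset\Gamma$. For the inductive step, if $D$ is the exceptional divisor of the blowup of a smooth subvariety $V$ of codimension $c$ in some intermediate smooth model $W_k$ carrying the sub-boundary $B_{W_k}=\sum_l\alpha_l F_l$, a direct computation gives
$$a(D,X',B')=c-\sum_{V\subset F_l}\alpha_l=(c-s)+\sum_{V\subset F_l}a(F_l,X',B'),$$
where $s$ is the number of components $F_l$ containing $V$; since $c\ge s$, both terms on the right lie in $\Gamma$ by the inductive hypothesis. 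Finally, because the generators $1$ and those $1-b_i$ with $b_i<1$ are strictly positive while the $1-b_i$ with $b_i=1$ contribute $0$, the set of values in $\Gamma\cap[0,a]$ is finite.

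The one step that requires genuine care is the identification $a(D,X,B+M)=a(D,X',B')$ on a model where the $M_i'$ descend; everything afterwards is a purely combinatorial consequence of the blowup formula for log discrepancies on a log smooth sub-lc pair.
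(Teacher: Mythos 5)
Your route is genuinely different from the paper's. The paper first passes to a $\Q$-factorial generalised dlt model and then handles the irrational coefficients $(b_i,\mu_i)$ by writing them as a convex combination $\sum r_j v_j$ of nearby rational points on the smallest rational affine subspace through them; this decomposes $(X,B+M)$ into generalised dlt pairs $(X,B^j+M^j)$ with $\Q$-Cartier log canonical divisors, for which the log discrepancies lie in $1+\frac{1}{l}\Z$, and finiteness follows from $a(D,X,B+M)=\sum r_j a(D,X,B^j+M^j)$. You instead descend $M'$ to a log resolution, convert the generalised log discrepancies into ordinary log discrepancies of the log smooth sub-lc trace $(X',B')$, and exploit the discrete semigroup structure $\Gamma=\Z_{\ge 0}+\sum\Z_{\ge 0}(1-b_i)$. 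Your reduction $a(D,X,B+M)=a(D,X',B')$ is correct (and is the standard fact once the nef part descends), your semigroup is the right one, and $\Gamma\cap[0,a]$ is indeed finite. Your argument also never uses the Cartier hypothesis on the $M_i'$, only that $M'$ descends to a log resolution, so it is in that respect more elementary and slightly more general.

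There is, however, one real gap: the induction "on the length of a smooth blowup sequence extracting $D$" only covers those divisorial valuations that are reached by a chain of blowups with smooth centres having normal crossings with the accumulated boundary. In dimension two every divisor over $X'$ arises this way, but in dimension $\ge 3$ this is not automatic; justifying it requires principalization of the ideal defining a model where $D$ appears (i.e.\ a strong form of resolution), which you never invoke. Moreover, even granting such a sequence, you need each centre to be snc with the current boundary so that the strict transforms stay smooth and the formula $a(D)=c-\sum_{V\subset F_l}\alpha_l$ with $s\le c$ applies at the next stage. The cleanest repair avoids the induction entirely: for any prime divisor $D$ over the smooth $X'$ with snc $B'=\sum b_iB_i'$, one has
$$
a(D,X',B')=a(D,X',0)-\sum_i b_i\, m_i,\qquad m_i:=\mu_D\bigl(\pi^*B_i'\bigr)\in\Z_{\ge 0},
$$
and
$$
a(D,X',B')=\Bigl(a(D,X',0)-\sum_i m_i\Bigr)+\sum_i m_i(1-b_i),
$$
where $a(D,X',0)-\sum_i m_i=a\bigl(D,X',\textstyle\sum_i B_i'\bigr)$ is a non-negative integer because $(X',\sum B_i')$ is log smooth with reduced boundary, hence lc, and $K_{X'}$ and the $B_i'$ are Cartier. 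This puts $a(D,X',B')$ in $\Gamma$ for every $D$ in one step, after which your finiteness argument goes through.
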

\begin{proof}
The lemma is obvious if $K_X+B+M$ is $\Q$-Cartier because in this the log discrepancies belong to 
$\frac{1}{l}\Z^{\ge 0}$ for some $l\in \N$. To treat the general case we use 
approximation. First replacing the pair with a $\Q$-factorial generalised dlt model, 
we can assume $(X,B+M)$ is $\Q$-factorial dlt. Replacing $p$, we can assume $B=\sum_1^p b_iB_i$ where 
$B_i$ are distinct prime divisors and $b_i\ge 0$. Consider the point 
$$
v=(b_1,\dots,b_p,\mu_1,\dots,\mu_p)\in \R^p\times \R^p.
$$ 
Taking the smallest rational hyperplane (i.e. hyperplane generated by points with rational 
coordinates) passing through $v$, we can find $q\le 2p+1$ and positive real numbers 
$r_j$ and points 
$$
v_j=(b_{j,1},\dots,b_{j,p},\mu_{j,1},\dots,\mu_{j,p})\in \R^p\times \R^p
$$ 
with non-negative rational coordinates sufficiently close to those of $v$, for $1\le j\le q$, such that  
$\sum_1^q r_j=1$ and $\sum_1^q r_jv_j=v$, and if the $k$-th coordinate of $v$ is rational, then all $v_j$ have the same 
$k$-th coordinate as $v$. 

Letting $B^j:=\sum_1^q b_{j,i}B_i$ and $M'^j:=\sum_1^q \mu_{j,i}M_i'$, we get 
$$
\sum_{j=1}^q r_jB^j=\sum_{j=1}^pr_j(\sum_{i=1}^pb_{j,i}B_i)
=\sum_{i=1}^p(\sum_{j=1}^qr_jb_{j,i}B_i)=\sum_{i=1}^pb_iB_i=B
$$ 
and similarly $M'=\sum_1^q r_jM'^j$. Thus 
$$
(*) \ \ \sum_1^q r_j (K_X+B^j+M^j)=K_X+B+M.
$$
On the other hand, $(X,B+M)$ is $\Q$-factorial generalised dlt, 
so 
$$
(X,u(B-\rddown{B})+\rddown{B}+uM)
$$ 
is generalised dlt for some $u>1$.
Therefore, letting $M^j=\phi_*M'^j$, 
we can assume that the generalised pairs $(X,B^j+M^j)$, which come with data 
$X'\to X$ and $M'^j$, are all generalised dlt.

Now let $D$ be a prime divisor over $X$ with $a(D,X,B+M)\le a$. Then by $(*)$ and by $M'=\sum_1^q r_jM'^j$, 
we have 
$$
\sum_1^q r_j a(D,X,B^j+M^j)=a(D,X,B+M)\le a.
$$
Since $r_j$ are fixed and 
$$
a(D,X,B^j+M^j)\le \frac{a}{r_j},
$$
there are finitely many possibilities for the $a(D,X,B^j+M^j)$ as $K_X+B^j+M^j$ are $\Q$-Cartier. Thus 
there are also finitely many possibilities for the $a(D,X,B+M)$.
 
\end{proof}

\begin{proof}(of Theorem \ref{t-disc-for-F-glc})
\emph{Step 1.}
Assume that the theorem does not holds. Then there is a sequence 
$(X_i,B_i+M_i),D_i$ of generalised pairs and prime divisors as in the theorem such that  
$$
a(D_i,X_i,B_i+M_i)\le 1
$$
and the set
$$
\{a(D_i,X_i,B_i+M_i)\}_{i\in \N}
$$ 
is not finite. Here $(X_i,B_i+M_i)$ comes with data 
$X_i'\overset{\phi_i}\to X_i$ and $M_i'=\sum \mu_{i,j}M_{i,j}'$.\\

\emph{Step 2.}
Applying Proposition \ref{p-bir-bnd-model-for-fixed-vol}, 
there exists a bounded set of couples $\mathcal{P}$ such that for each $i$ there exist a 
log smooth couple $(\overline{X}_i,\overline{\Sigma}_i)\in \mathcal{P}$ and a birational map 
$\overline{X}_i\bir X_i$ such that 
\begin{itemize}
\item $\overline{\Sigma}_i\ge \overline{B}_i$ where $\overline{B}_i$ is the sum of the reduced exceptional 
divisor of $\overline{X}_i\bir X_i$ plus the birational transform of $B_i$,

\item each $M_{i,j}'$  descends to $\overline{X}_i$, say as $\overline{M}_{i,j}$,  and 

\item letting $\overline{M}_i=\sum \mu_{i,j}\overline{M}_{i,j}$, we have 
$$
\vol(K_{\overline{X}_i}+\overline{B}_i+\overline{M}_i)=v.
$$
\end{itemize}

We can assume that $\psi_i\colon X_i'\bir \overline{X}_i$ is a morphism. Since $K_{X_i}+B_i+M_i$ is ample, 
$$
\phi_i^*(K_{X_i}+B_i+M_i)\le \psi_i^*(K_{\overline{X}_i}+\overline{B}_i+\overline{M}_i).
$$
Thus, by Lemma \ref{l-rat-map-to-lc-model}, $X_i\bir \overline{X}_i$ does not contract any divisor.\\

\emph{Step 3.}
As in the proof of Theorem \ref{t-dcc-vol-gen-pairs}, we can assume that 
there is a couple $(\overline{V},\overline{\Sigma})$ which is relatively log smooth  
over a smooth variety $T$ such that for each $i$ there is a closed point $t_i\in T$ so that we can identify 
$(\overline{X}_i,\overline{\Sigma}_i)$ with $(\overline{V}_{t_i},\overline{\Sigma}_{t_i})$ 
where the subscript $t_i$ means fibre over $t_i$. Moreover, 
we can assume that the strata of $(\overline{V},\overline{\Sigma})$ 
have irreducible fibres over $T$. In particular, we can assume that there is a 1-1 correspondence between the 
strata of $(\overline{V},\overline{\Sigma})$ and the strata of $(\overline{X}_i,\overline{\Sigma}_i)$ for each $i$.
In addition, we can assume there exist $n,q\in \N$ such that for each $i$ and each $j>q$, we have  
$M_{i,j}\equiv 0$, and that there exist Cartier divisors $n\overline{N}_j$ on $\overline{V}$ such that 
$n\overline{N}_j|_{\overline{X}_i}\sim n\overline{M}_{i,j}$ for each $i$ and each $j\le q$.

For each $i$, write $\overline{B}_i=\sum b_{i,j}\overline{B}_{i,j}$ where $\overline{B}_{i,j}$ are 
distinct prime divisors. Replacing $q$ we can assume that $b_{i,j}=0$ for $j>q$. 
Since $b_{i,j},\mu_{i,j}$ are in the DCC set $\Phi$, 
replacing the sequence $(X_i,B_i+M_i)$ with a subsequence, we can assume that for each $j$ the numbers 
$b_{i,j}$ form an increasing sequence and the numbers 
$\mu_{i,j}$ also form an increasing sequence. Moreover, we can assume that for each $j$, there 
is a component of $\overline{\Sigma}$ whose restriction to $\overline{X}_i$ is $\overline{B}_{i,j}$, 
for every $i$.\\

\emph{Step 4.}
For each $i$, there is $\overline{\Gamma}_i\le \overline{\Sigma}$ on $\overline{V}$ such that 
$\overline{\Gamma}_i|_{\overline{X}_i}=\overline{B}_{i}$. Moreover, for each $i$, we have 
$$\begin{aligned}
(*)~~  \vol(K_{\overline{X}_1}+\overline{B}_1+\overline{M}_1)& = 
\vol(K_{\overline{X}_1}+\overline{B}_1+\sum_{j\le q} \mu_{1,j}\overline{M}_{1,j}) \\
&=v\\
&=\vol(K_{\overline{X}_i}+\overline{B}_i+\sum_{j\le q} \mu_{i,j}\overline{M}_{i,j})\\
&=\vol(K_{\overline{X}_i}+\overline{\Gamma}_i|_{\overline{X}_i}+\sum_{j\le q} \mu_{i,j}\overline{N}_j|_{\overline{X}_i})\\
&=\vol(K_{\overline{X}_1}+\overline{\Gamma}_i|_{\overline{X}_1}+\sum_{j\le q} \mu_{i,j}\overline{N}_j|_{\overline{X}_1})\\
&=\vol(K_{\overline{X}_1}+\overline{\Gamma}_i|_{\overline{X}_1}+\sum_{j\le q} \mu_{i,j}\overline{M}_{1,j})
\end{aligned}$$
where the fifth equality holds similar to Step 6 of the proof of \ref{t-dcc-vol-gen-pairs}. 

We claim that $\overline{\Gamma}_i|_{\overline{X}_1}=\overline{B}_1$: by the previous step, 
$$
\overline{R}:=\overline{\Gamma}_i|_{\overline{X}_1}-\overline{B}_1=\sum (b_{i,j}-b_{1,j})\overline{B}_{1,j}\ge 0
$$ 
as $b_{i,j}\ge b_{1,j}$; assume $\overline{R}\neq 0$ and let $\overline{S}$ be an irreducible component;  
then $\overline{S}$ is not exceptional over $X_1$ because the exceptional divisors have 
coefficient $1$ in $\overline{B}_1$; but then the first and the last entry in the equation $(*)$ 
cannot be equal by Lemma \ref{l-vol-ample-div-increase} applied to a common resolution of $X_1$ and $\overline{X}_1$, a contradiction. Therefore, $\overline{\Gamma}_i|_{\overline{X}_1}=\overline{B}_1$ and 
$b_{i,j}=b_{1,j}$ for every $i,j$.
In particular, the coefficients of all the $B_i$ put together is a finite set.
Thus we can assume $D_i$ is exceptional over $X_i$ for every $i$. 

Similarly, applying Lemma \ref{l-vol-ample-div-increase} shows that $\mu_{i,j}=\mu_{1,j}$ for 
every $i$ and $j\le q$.\\

\emph{Step 5.}
From here to the end of Step 8, fix $i>1$. We can assume that $D_i$ is a divisor on $X_i'$.  
Moreover, we can assume 
$$
a(D_i,X_i,B_i+M_i)<1
$$ 
which ensures that 
$$
a(D_i,\overline{X}_i,\overline{\Sigma}_i)\le a(D_i,\overline{X}_i,\overline{B}_i)=a(D_i,\overline{X}_i,\overline{B}_i+\overline{M}_i)\le 
a(D_i,X_i,B_i+M_i)<1
$$
where the first inequality follows from $\overline{B}_i\le \overline{\Sigma}_i$, 
the equality follows from the fact that $M_i'$ descends to $\overline{X}_i$ as $\overline{M}_i$, and 
the second inequality follows from the fact that $K_X+B+M$ is ample and that 
$\overline{B}_i$ is the sum of the reduced exceptional 
divisor of $\overline{X}_i\bir X_i$ plus the birational transform of $B_i$.
Therefore, $D_i$ is toroidal with respect to $(\overline{X}_i,\overline{\Sigma}_i)$.\\ 

\emph{Step 6.}
There is a 
sequence $Y_i\to \overline{X}_i$ of smooth blowups, toroidal with respect to $(\overline{X}_i,\overline{\Sigma}_i)$, 
so that $D_i$ is not exceptional over 
$Y_i$. Abusing notation we denote the birational transform of $D_i$ on $Y_i$ again by $D_i$. 
Let $\Theta_i$ be the sum of the reduced exceptional divisor of $Y_i\to \overline{X}_i$ and 
the birational transform of $\overline{B}_i$. And then 
let $\Delta_i$ be the same as $\Theta_i$ except that we replace the coefficient of $D_i$ with  
$$
1-a(D_i,X_i,B_i+M_i).
$$  

Let $M_{Y_i,j}$ be the pullback of $\overline{M}_{i,j}$, and let $M_{Y_i}= \sum_{j}\mu_{i,j}M_{Y_i,j}$. 
We can assume $X_i'\bir Y_i$ is a morphism and then 
consider $(Y_i,\Gamma_i+M_{Y_i})$ as a generalised pair with data $X_i'\to Y_i$ and $M_{Y_i}$. 
Now we have 
$$
v=\vol(K_{X_i}+B_i+M_i)\le \vol(K_{Y_i}+\Delta_i+M_{Y_i})
$$
$$
=\vol(K_{Y_i}+\Delta_i+\sum_{j\le q}\mu_{i,j}M_{Y_i,j})
\le \vol(K_{\overline{X}_i}+\overline{B}_i+\overline{M}_i)=v,
$$
where the first inequality follows from the definition of $\Delta_i$ and the last inequality 
follows from the fact that the pushdown of $\Delta_i$ to $\overline{X}_i$ is $\le \overline{B}_i$. 
Therefore, all the inequalities are actually equalities. 

On the other hand, the pushdown of $\Delta_i$ to $X_i$ is $B_i$ and $\mu_{D_i}\Delta_i$ is 
just 
$$
1-a(D_i,X_i,B_i+M_i).
$$
Therefore,
$$
\vol(K_{Y_i}+\Delta_i+\sum_{j\le q}\mu_{i,j}M_{Y_i,j}-tD_i)
=\vol(K_{Y_i}+\Delta_i+M_{Y_i}-tD_i)<v
$$
for any $t>0$, by Lemma \ref{l-vol-glc-model-minus-effect-div}.\\ 

\emph{Step 7.}
Now $Y_i\to \overline{X}_i$ induces a sequence $\alpha\colon W\to \overline{V}$ of smooth blowups, 
toroidal with respect to $(\overline{V},\overline{\Sigma})$, 
and there is a boundary $\Delta$ on $W$ such that $\Delta|_{Y_i}=\Delta_i$. Also 
there is a prime divisor $J$ on $W$ such that $J|_{Y_i}=D_i$. 
Let $(W_i,\Lambda_i)$ and $S_i$ be the fibres of $(W,\Delta)$ and $J$ over $t_1$, respectively. Then 
we get a birational morphism $\beta_i\colon W_i\to \overline{X}_1$. Let  
$M_{W_i,j}=\beta_i^*\overline{M}_{1,j}$. Then recalling the fact that $\mu_{i,j}=\mu_{1,j}$ for $j\le q$, 
we have 
$$\begin{aligned}
\vol(K_{W_i}+\Lambda_i+\sum_{j\le q}\mu_{1,j}M_{W_i,j}-tS_i) &=
\vol(K_{W_i}+\Lambda_i+\sum_{j\le q}\mu_{i,j}M_{W_i,j}-tS_i) \\
& =\vol((K_{W}+\Delta+\sum_{j\le q}\mu_{i,j}\alpha^*N_{j}-tJ)|_{W_i}) \\
& =\vol((K_{W}+\Delta+\sum_{j\le q}\mu_{i,j}\alpha^*N_{j}-tJ)|_{Y_i})  \\
& =\vol(K_{Y_i}+\Delta_i+\sum_{j\le q}\mu_{i,j}M_{Y_i,j}-tD_i)
\end{aligned}
$$
for any small $t\ge 0$. 

We can assume that $X'\bir W_i$ is a morphism. Then we get 
$$
(**)~~~ \vol(K_{W_i}+\Lambda_i+M_{W_i}-tS_i)~~
\begin{cases}
=v ~~\mbox{if $t=0$,}\\
<v ~~\mbox{if $t>0$}
\end{cases}
$$
where $M_{W_i}=\sum_j \mu_{i,j}M_{W_i,j}$ is the pushdown of $M_i'$.\\

\emph{Step 8.}
By construction, $\Delta$ on $W$ is the sum of the reduced exceptional divisor of $W\to \overline{V}$ plus 
the birational transform of $\overline{\Gamma}_i$ except that its coefficient at $J$ is 
$$
c_i:=1-a(D_i,X_i,B_i+M_i).
$$ 
Thus  $\Lambda_i$ is the sum of the reduced exceptional divisor of $W_i\to \overline{X}_1$ 
plus the birational transform of 
$$
\sum b_{i,j}\overline{B}_{1,j}=\sum b_{1,j}\overline{B}_{1,j}=\overline{B}_1
$$ 
except that its coefficient at $S_i$ is $c_i$.

We claim that 
$$
c_i=1-a(S_i,X_1,B_1+M_1)=:e_i.
$$
Let $\Omega_i$ be the same as $\Lambda_i$ except that we replace the coefficient of $S_i$ 
with $e_i$. Then $\Omega_i$ is the sum of the reduced exceptional divisor of $W_i\bir {X}_1$ 
plus the birational transform of $B_1$ except that its coefficient at $S_i$ is $e_i$.
In particular, the pushdown of $\Omega_i$ to $X_i$ is just $B_i$. 

First assume $c_i<e_i$. Then $\Omega_i$ is a boundary, so applying 
Lemma \ref{l-vol-glc-model-minus-effect-div} to the generalised pair $(W_i,\Omega_i+M_{W_i})$, we get 
$$
\vol(K_{W_i}+\Lambda_i+M_{W_i})=\vol(K_{W_i}+\Omega_i+M_{W_i}-(e_i-c_i)S_i)<v,
$$
which contradicts formula $(**)$. 

Next assume $c_i>e_i$. Then 
$$
v=\vol(K_{W_i}+\Omega_i+M_{W_i})=\vol(K_{W_i}+\Lambda_i+M_{W_i}-(c_i-e_i)S_i)<v,
$$
where the first equality follows from the definition of $\Omega_i$ and 
the inequality follows from $(**)$. This is a contradiction. Therefore, $c_i=e_i$ as claimed.\\

\emph{Step 9.}
By the previous step and by assumptions, 
$$
a(S_i,X_1,B_1+M_1)=a(D_i,X_i,B_i+M_i)< 1
$$
for every $i$. But by Lemma \ref{l-disc-fixed-pair}, the left hand of  
$$
\{a(S_i,X_1,B_1+M_1)\}_{i\in \N}=\{a(D_i,X_i,B_i+M_i)\}_{i\in \N}
$$
is a finite set. This is a contradiction. 
 
\end{proof}

%%%%%%
\subsection{Boundedness of $\mathcal{F}_{gklt}(d,\Phi,v)$}
In this subsection we prove Theorem \ref{t-bnd-gen-pairs-vol=v}. The key ingredients are 
descent of nef divisors to bounded models, more precisely, Proposition \ref{p-bir-bnd-model-for-fixed-vol}, 
together with Theorem \ref{t-disc-for-F-glc}.

\begin{proof}(of Theorem \ref{t-bnd-gen-pairs-vol=v})
\emph{Step 1.}
By Theorem \ref{t-disc-for-F-glc}, 
$$
\{a(D,X,B+M)\le 1 \mid (X,B+M)\in \mathcal{F}_{gklt}(d,\Phi,v), ~~ \mbox{$D$ prime divisor over $X$}\}
$$
is a finite set. Thus there is $\epsilon>0$ depending only on $d,\Phi,v$ 
such that every 
$$
(X,B+M)\in \mathcal{F}_{gklt}(d,\Phi,v)
$$ 
is generalised $\epsilon$-lc.\\

\emph{Step 2.}
On the other hand, by Proposition \ref{p-bir-bnd-model-for-fixed-vol}, 
there exists a bounded set of couples $\mathcal{P}$ such that for each 
$$
(X,B+M)\in \mathcal{F}_{gklt}(d,\Phi,v)
$$ 
with data $X'\overset{\phi}\to X$ and $M'=\sum \mu_i M_i'$ there exist a 
log smooth couple $(\overline{X},\overline{\Sigma})\in \mathcal{P}$ and a birational map 
$\overline{X}\bir X$ such that 
\begin{itemize}
\item $\overline{\Sigma}\ge \overline{B}$ where $\overline{B}$ is the sum of the exceptional 
divisors of $\overline{X}\bir X$ plus the birational transform of $B$,

\item each $M_{i}'$  descends to $\overline{X}$, say as $\overline{M}_{i}$,  and 

\item letting $\overline{M}=\sum \mu_{i}\overline{M}_{i}$, we have 
$$
\vol(K_{\overline{X}}+\overline{B}+\overline{M})=v.
$$
\end{itemize}\

\emph{Step 3.}
Fix $(X,B+M)\in \mathcal{F}_{gklt}(d,\Phi,v)$ throughout the proof.
Since $(X,B+M)$ is generalised klt, $\phi_*M_i'$ is $\Q$-Cartier if $M_i'\equiv 0$: this can be seen 
by considering a small $\Q$-factorialisation $Y\to X$ and then noting that $Y\to X$ can be decomposed 
into a finite number of extremal contractions, so we can use the cone theorem to ensure 
$\phi_*M_i'$ is $\Q$-Cartier. Thus we can remove any $M_i'\equiv 0$ from $M'$. In particular, 
we can assume that the number of the $\mu_i$ is bounded by some number $q\in \N$ otherwise 
$\vol(K_{\overline{X}}+\overline{B}+\overline{M})$ would not be bounded. Moreover, by the proof 
Theorem \ref{t-disc-for-F-glc}, the coefficients of $\overline{B}$ and the $\mu_i$ 
all belong to a fixed finite set depending only on $d,\Phi,v$. In addition, 
there is a very ample divisor $\overline{A}$ on $\overline{X}$ such that 
$\overline{A}^d$ is bounded from above depending only on $d,\Phi,v$, and that 
$$
\overline{A}-(\overline{B}+\overline{M}), \ \ \overline{A}-(K_{\overline{X}}+\overline{B}+\overline{M})\ \ 
$$
are pseudo-effective.\\

\emph{Step 4.}
By Lemma \ref{l-rat-map-to-lc-model}, $X\bir \overline{X}$ does not contract any divisor 
and $(X,B+M)$ is the generalised lc model of $(\overline{X},\overline{B}+\overline{M})$.
Let $\overline{\Gamma}$ be obtained from $\overline{B}$ by replacing each coefficient in 
$(1-\epsilon,1]$ with $1-\epsilon$. Since $(X,B+M)$ is generalised $\epsilon$-lc, 
$(X,B+M)$ is also the generalised lc model of $(\overline{X},\overline{\Gamma}+\overline{M})$. 

Now by [\ref{BZh}, Theorem 1.8], we can find rational numbers $\nu_i\le \mu_i$ belonging 
to a finite set depending only on $d,\Phi,v$ 
such that letting ${N}':=\sum \nu_i {M}_i'$ and letting $\overline{N}=\sum \nu_i\overline{M}_i$, the divisor  
$K_{\overline{X}}+\overline{\Gamma}+\overline{N}$ is big. Then applying 
[\ref{BZh}, Theorem 1.3], there is $m\in \N$ depending only on $d,\Phi,v$ such that there is 
$$
0\le \overline{L}\sim m(K_{\overline{X}}+\overline{\Gamma}+\overline{N})
$$
and $m\overline{N}$ is integral. In particular, the coefficients of $\overline{L}$ 
belongs to a fixed DCC set: indeed, $\overline{L}=\overline{P}+m\overline{\Gamma}$ 
for some integral divisor $\overline{P}$; the coefficients of $m\overline{\Gamma}$ are in a DCC subset of $[0,m]$; 
so the coefficients of $\overline{P}$ are $\ge -m$, hence they are in a DCC set; this implies that the coefficients 
of $\overline{L}$ are in a DCC set. 

In addition, 
$$
\overline{L}\cdot \overline{A}^{d-1}=m(K_{\overline{X}}+\overline{\Gamma}+\overline{N})\cdot \overline{A}^{d-1}
\le m (K_{\overline{X}}+\overline{B}+\overline{M})\cdot \overline{A}^{d-1},
$$
so the left hand side is bounded from above. Thus  
$({\overline{X}},\Supp (\overline{\Gamma}+\overline{L}+\overline{A}))$ 
belongs to a bounded family and the coefficients of $\overline{L}$ are 
bounded from above. In particular, since $\overline{L}$ is big, we can replace it with a bounded multiple 
and change it linearly so that we can assume that 
$\overline{L}\ge a\overline{A}$ for some $a>0$.\\ 

\emph{Step 5.}
By construction, 
$$
K_{\overline{X}}+\overline{\Gamma}+\overline{M}\sim_\Q\frac{1}{m}\overline{L}+\overline{M}-\overline{N}.
$$
On the other hand, $(\overline{X},\overline{\Gamma})$ is $\epsilon$-lc and 
$\overline{A}-\overline{\Gamma}$ and $\overline{A}-\frac{1}{m}\overline{L}$ are 
pseudo-effective, hence by [\ref{B-BAB}, Theorem 1.8] (also see Lemma \ref{l-bnd-glct-e-lc}), 
there is a fixed $t>0$ depending only on $d,\epsilon,\overline{A}^d$, hence 
only on $d,\Phi,v$, such that 
$$
(\overline{X},\overline{\Gamma}+\frac{t}{m}\overline{L})
$$ 
is klt. Replacing $t$ with $\frac{t}{2}$, we can assume  the pair is $\frac{\epsilon}{2}$-lc. 

Let 
$$
\overline{\Delta}:=\overline{\Gamma}+\frac{t}{m}\overline{L} \ \ \mbox{and} \ \  
{P}':=\sum p_iM_i':=(1+t){M}'-t{N}'
$$ 
and let $\overline{P}$ be the pushdown of $P'$ to $\overline{X}$.
Then 
$
(\overline{X},\overline{\Delta}+\overline{P})
$ 
is generalised $\frac{\epsilon}{2}$-lc with nef part $P'$. 
Note that $\overline{\Delta}\ge c\overline{A}$ with $c=\frac{ta}{m}$.

Moreover,
$$
K_{\overline{X}}+\overline{\Delta}+\overline{P}
= K_{\overline{X}}+\overline{\Gamma}+\overline{M}+\frac{t}{m}\overline{L}+t\overline{M}-t\overline{N}
$$
$$
\sim_\Q (1+t)(K_{\overline{X}}+\overline{\Gamma}+\overline{M}).
$$
Therefore, if $\Delta+P$ is the pushdown of $\overline{\Delta}+\overline{P}$, then 
$(X,\Delta+P)$ is the generalised lc model of $(\overline{X},\overline{\Delta}+\overline{P})$.\\

\emph{Step 6.}
Perhaps after replacing $\overline{A}$ we can assume that 
$\overline{A}-K_{\overline{X}}$ is ample, hence  
fixing a sufficiently large $r\in \N$, there is $n\in\N$ depending only on $d$ such that 
$$
n(\overline{A}+r\overline{M}_i)= n(K_{\overline{X}}+\overline{A}-K_{\overline{X}}+r\overline{M}_i)
$$
is base point free, for each $i$, by the effective base point free theorem [\ref{kollar-ebpf}]. 
Pick general  
$$
0\le \overline{R}_i\sim n(\overline{A}+r\overline{M}_i)
$$
and let 
$$
\overline{\Theta}:=\overline{\Delta}-\sum \frac{p_i}{r}\overline{A}
+\sum \frac{p_i}{rn}\overline{R}_i.
$$
Then recalling that $p_i=(1+t)\mu_i-t\nu_i$, we get an 
$\frac{\epsilon}{2}$-lc pair $(\overline{X},\overline{\Theta})$ so that 
$$
K_{\overline{X}}+\overline{\Theta}\sim_\R K_{\overline{X}}+\overline{\Delta}+\overline{P}.
$$\

\emph{Step 7.}
By construction, the coefficients of $\overline{\Theta}$ are bounded from below 
away from zero. Moreover, if $\Theta$ is the pushdown of $\overline{\Theta}$ to $X$, then 
$$
K_X+\Theta\sim_\R (1+t) K_X+B+M
$$
is ample. Thus $(X,\Theta)$ is the lc model of $(\overline{X},\overline{\Theta})$, and 
$(X,\Theta)$ is $\frac{\epsilon}{2}$-lc. 
Now applying [\ref{HMX2}, Theorem 1.6] shows that $(X,\Theta)$ belongs to a bounded family.
 Therefore, $(X,B+M)$ also belongs to a bounded family in the sense that there is a 
 very ample divisor $H$ on $X$ with bounded $H^d$ and bounded
$$
(K_X+B+M)\cdot H^{d-1}.
$$

\end{proof}

%%%%%%%%%%%%%%%%%%%%%%%%%%%%%%%%
%%%%%%%%%%%%%%%%%%%%%%%%%%%%%%%%

\section{\bf DCC of Iitaka volumes}

In this section we prove our main result on the DCC of Iitaka volumes, that is, 
Theorem \ref{t-dcc-iitaka-volumes}. Given $(X,B)$ in $\mathcal{I}_{lc}(d,\Phi,u)$ 
or in $\mathcal{I}_{klt}(d,\Phi,<\!\!u)$ with the corresponding contraction $X\to Z$, the key 
idea is to control coefficients of the discriminant and moduli divisors appearing 
in the adjunction formula for $(X,B)\to Z$. After that we can just apply DCC  
of volumes of generalised pairs, that is, Theorem \ref{t-dcc-vol-gen-pairs}.
We start with some preparations.

\subsection{Adjunction formula for $\mathcal{I}_{lc}(d,\Phi,u)$}
Given $q\in \N$ and two $\R$-divisors $C,D$ on a normal variety $X$, 
by $C\sim_q D$ we mean that $qC\sim qD$.

\begin{lem}\label{l-bnd-torsion-index}
Let $\Phi\subset \Q^{\ge 0}$ be a DCC set. Let $\mathcal{P}$ be a bounded set of 
projective lc pairs $(X,B)$ where the coefficients of $B$ are in $\Phi$ and $K_X+B\sim_\Q 0$.
Then there exists $q\in \N$ depending only on $\Phi,\mathcal{P}$ such that $q(K_X+B)\sim 0$ 
for any $(X,B)\in \mathcal{P}$. 
\end{lem}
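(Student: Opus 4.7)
The plan is to organise $\mathcal{P}$ into a single family and use the relative Picard scheme of that family to bound the torsion order of $K_X+B$ uniformly.

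First I would unpack the definition of boundedness: there exist finitely many families $(\mathcal{X}^{(i)}, \mathcal{B}^{(i)})\to T^{(i)}$ over finite type bases such that each $(X,B)\in\mathcal{P}$ appears as a fibre. By stratifying the bases and passing to irreducible components, I may reduce to a single smooth connected family $(\mathcal{X},\mathcal{B})\to T$ with flat fibres and with each component of $\mathcal{B}$ restricting to an irreducible component of each fibre. A degree argument is then needed to constrain the coefficients: since $(K_X+B)\cdot H^{d-1}$ is uniformly bounded by the bounded family hypothesis and $K_X+B\sim_\Q 0$ gives $B\cdot H^{d-1} = -K_X\cdot H^{d-1}$, the total degree of $B$ is bounded; combined with the DCC condition on $\Phi$ this forces the coefficients of $\mathcal{B}$ to take values in a fixed finite subset of $\Phi$, constant across $T$ after further stratification.

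Next I would invoke the boundedness of the Cartier index of $K_X+B$ for lc pairs in a bounded family with boundary coefficients in a fixed finite set. This yields a fixed $m\in\N$ with $m(K_X+B)$ Cartier for every $(X,B)\in\mathcal{P}$, so that $\mathcal{L} := \mathcal{O}_{\mathcal{X}}\bigl(m(K_{\mathcal{X}/T}+\mathcal{B})\bigr)$ is a genuine line bundle on $\mathcal{X}$. Its restriction to any relevant fibre $X_t$ is $\Q$-linearly trivial, hence numerically trivial and torsion in $\mathrm{Pic}(X_t)$.

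Finally, $\mathcal{L}$ defines a section $\sigma\colon T\to \mathrm{Pic}^0_{\mathcal{X}/T}$ of the identity component of the relative Picard scheme. The task is to bound the order of $\sigma(t)$ uniformly in $t$. I would do this by simultaneous log resolution $\widetilde{\mathcal{X}}\to\mathcal{X}$, reducing to a smooth projective family where the $n$-torsion subgroup schemes of $\mathrm{Pic}^0$ have length bounded in $n$ fibrewise (classical for abelian schemes, extended via Noetherianity of $T$ and constructibility of the loci $\{t:n\sigma(t)=0\}$ to obtain a single $N$ with $N\sigma\equiv 0$). Descending to $\mathcal{X}$ via the pushforward of the log resolution, this yields $Nm(K_X+B)\sim 0$ for every $(X,B)\in\mathcal{P}$, so $q:=Nm$ works.

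The principal obstacle is the last step: ensuring the uniform torsion bound on $\mathrm{Pic}^0$ remains valid when the fibres $X_t$ are singular lc varieties. The standard device is to pass to a simultaneous log resolution, invoking the lc assumption to guarantee that $\Q$-linear triviality of $K_{X_t}+B_t$ is preserved (and tracked) on the resolution, and then to descend the bound. Everything else reduces to organising the data into a family and applying Noetherianity plus classical results on torsion in Picard schemes.
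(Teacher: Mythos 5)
Your proof has a genuine gap at exactly the point you flag as the ``principal obstacle,'' and the obstacle is not the singularity of the fibres but the passage from fibrewise torsion to a uniform torsion bound. Knowing that $\sigma(t)$ is a torsion point of $\operatorname{Pic}^0_{\mathcal{X}/T}$ for every $t$ in a (countable, dense) set of points coming from $\mathcal{P}$ does not yield a single $N$ with $N\sigma\equiv 0$. The loci $T_n=\{t: n\sigma(t)=0\}$ are indeed closed, but their union need only contain that countable dense set, and no Noetherianity or Baire-type argument forces any single $T_n$ to be dense. Worse, the phenomenon you would need to exclude really occurs: for the constant abelian scheme $E\times E\to E$ with $\mathcal{L}=\mathcal{O}(\Delta-E\times\{0\})$, the induced section $t\mapsto \mathcal{O}_E(t-0)$ hits torsion points of unbounded order on a dense set without being a torsion section, and nothing in your setup rules out $\mathcal{O}(m(K_{\mathcal{X}/T}+\mathcal{B}))$ behaving this way. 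The missing idea is to prove torsionness at the \emph{generic} point of $T$, i.e.\ that $\kappa(K+B)=0$ (not merely $\kappa_\sigma=0$) on the generic fibre. This is what the paper's proof supplies: it passes to log resolutions $(V_i,C_i)$ in a bounded log smooth family, uses deformation invariance of $\kappa_\sigma$ ([HMX1, Theorem 1.8]) to get $\kappa_\sigma(K_G+C_G)=0$ for the generic fibre, invokes Gongyo's abundance for numerical log Kodaira dimension zero to produce one $q$ with $h^0(q(K_G+C_G))\neq 0$, and then descends to the special fibres by semicontinuity of $h^0$ rather than by any Picard-scheme argument. Some input of this strength (abundance in numerical dimension zero) is unavoidable; your argument has no substitute for it.

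A secondary concern: you invoke ``boundedness of the Cartier index of $K_X+B$ for lc pairs in a bounded family with boundary coefficients in a fixed finite set'' as a black box. For klt pairs this is standard, but for lc pairs it is not an off-the-shelf fact; the index of $K_X+B$ along an lc centre is controlled by the global index of an associated lower-dimensional log Calabi--Yau structure, so this input is entangled with the very statement you are proving and would need its own justification. Also note that the reduction of the coefficients of $B$ to a finite set is obtained in the paper from the global ACC ([HMX2, Theorem 1.5]) applied to the numerically trivial pairs $(X,B)$, not from a degree bound alone; a degree bound caps the coefficients from above but does not by itself rule out infinitely many values of a DCC set below the cap accumulating from below.
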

\begin{proof}
First applying [\ref{HMX2}, Theorem 1.5], we can assume that $\Phi$ is finite. 
Now assume the lemma does not hold. Then there is a sequence $(X_i,B_i)\in \mathcal{P}$ 
such that if $q_i\in\N$ is the smallest number satisfying $q_i(K_{X_i}+B_i)\sim 0$, then 
the $q_i$ form a strictly increasing sequence of numbers. 

For each $i$, there is a log resolution $\phi_i\colon V_i\to X_i$ so that if 
$C_i$ is the sum of the exceptional divisors of $\phi_i$ plus the birational 
transform of $B_i$, then the $(V_i,C_i)$ belong to some bounded family of pairs 
depending only on $\mathcal{P}$. Since $K_{X_i}+B_i\sim_\Q 0$, 
$$
\kappa(K_{V_i}+C_i)=\kappa_\sigma(K_{V_i}+C_i)=0.
$$
  
Replacing the sequence $(X_i,B_i)$ with a subsequence, we can assume that there is a 
pair $(V,C)$ that is log smooth over some smooth variety $T$ such that each 
$(V_i,C_i)$ is isomorphic to the log fibre of $(V,C)$ over some closed point $t_i\in T$, 
and such that the set $\{t_i\}$ is dense in $T$. Here we are making use of the fact that 
$\Phi$ is finite.

Now by [\ref{HMX1}, Theorem 1.8], 
$$
 \kappa_\sigma(K_{G}+C_G)=\kappa_\sigma(K_{V_i}+C_i)=0
$$
for each $i$ where $(G,C_G)$ is the generic log fibre of $(V,C)$ over $T$. Alternatively, we can 
argue that $K_G+C_G$ is pseudo-effective by showing that an MMP on $K_V+C$ over $T$ cannot end 
with a Mori fibre space, so $0 \le \kappa_\sigma(K_{G}+C_G)$, and then use semi-continuity of 
cohomology to deduce $\kappa_\sigma(K_{G}+C_G)\le \kappa_\sigma(K_{V_i}+C_i)$, hence the equality above.

Applying [\ref{Gongyo}] to $(G,C_G)$ after passing to the algebraic closure of $k(T)$, 
we see that we have $\kappa(K_{G}+C_G)=0$ 
and that there is a sufficiently divisible $q\in\N$ such that $h^0(q(K_G+C_G))\neq 0$. 
Applying semi-continuity of cohomology we deduce that 
$h^0(q(K_{V_i}+C_i))\neq 0$ for every $i$. Therefore, for every $i$, $h^0(q(K_{X_i}+B_i))\neq 0$ and so   
$q(K_{X_i}+B_i)\sim 0$ as $K_{X_i}+B_i\sim_\Q 0$. This means $q_i\le q$ for every $i$, a contradiction.  

\end{proof}

\begin{lem}\label{l-cartier-index-moduli-div-over-curves}
Let $d,q\in\N$ and $u\in \Q^{>0}$. 
Then there exists $p\in \N$ depending only on $d,q,u$ satisfying the following. 
Assume that 
\begin{itemize}
\item $(X,B)$ is a projective lc pair of dimension $d$,

\item $f\colon X\to Z$ is a contraction onto a curve with $K_X+B\sim_\Q 0/Z$, 

\item we have an adjunction formula 
$$
(*)~~~\ \ K_X+B\sim_q f^*(K_Z+B_Z+M_Z),
$$ 

\item $A\ge 0$ is an integral divisor on $X$,

\item over some non-empty open subset $U\subset Z$: 
$(X,B+tA)$ is lc for some $t>0$, and $A$ is relatively semi-ample, and  

\item for a general fibre $F$ of $f$, we have $0<\vol(A|_F)\le u$.
\end{itemize} 
Then $pM_Z$ is integral.
\end{lem}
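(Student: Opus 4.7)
The strategy is to restrict the adjunction $(*)$ to a general fibre of $f$, verify that the resulting polarised log Calabi--Yau pair lies in a bounded family, and then apply an effective canonical bundle formula.

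Restrict $(*)$ to a general fibre $F$ of $f$. Since $f(F) \in Z$ avoids the finite support of $B_Z + M_Z$ and we may choose a representative of $K_Z$ avoiding it as well, we have $f^*(K_Z + B_Z + M_Z)|_F \sim 0$. Combined with adjunction $(K_X+B)|_F = K_F + B_F$, this gives $q(K_F+B_F) \sim 0$. Thus $(F,B_F)$ is an lc log Calabi--Yau pair of dimension at most $d-1$ with $K_F+B_F$ being $q$-torsion and coefficients of $B_F$ in the finite set $\tfrac{1}{q}\Z \cap [0,1]$. Moreover $A|_F$ is ample: by $f$-semi-ampleness of $A$ over $U$ it is semi-ample on $F$, and positivity of $\vol(A|_F)$ makes it big. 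By hypothesis $(X, B+tA)$ is lc over $U$, so adjunction yields $(F, B_F + tA|_F)$ lc, that is, $A|_F$ contains no non-klt centre of $(F, B_F)$; also $\vol(A|_F) \le u$. These are exactly the conditions of [\ref{B-pol-var}, Corollaries 1.6 and 1.8], so the polarised data $\bigl((F,B_F), A|_F\bigr)$ form a bounded family depending only on $d$, $q$, $u$.

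Now invoke the canonical bundle formula [\ref{FM}]. Boundedness of polarised fibres uniformly controls the variation of $f$ over the open locus of generic-type fibres, and the torsion index of $K_F + B_F$ divides $q$ uniformly. Consequently [\ref{FM}] delivers $p \in \N$ depending only on $d$, $q$, $u$ such that the moduli b-divisor $p\mathbf{M}$ of the fibration is b-Cartier. Since $Z$ is a normal curve and hence smooth, b-Cartier at level $p$ means that $pM_Z$ itself is Cartier on $Z$, in particular integral, which is the desired conclusion.

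The main obstacle is making the canonical bundle formula effective at this level of generality: the b-semi-ampleness statement of [\ref{FM}] is a priori only qualitative, and uniform integrality of $pM_Z$ requires the boundedness of polarised log fibres from the previous step to bound the depth of the semistable (or $\mathbf{Z}/q$) base change needed to clear denominators in the moduli part. One must also verify that this effectivity survives push-down from the base change back to $Z$, where the fact that $Z$ is a smooth curve keeps the descent clean.
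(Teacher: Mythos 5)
Your reduction to the general fibre and the boundedness of the polarised log Calabi--Yau fibres $\bigl((F,B_F),A|_F\bigr)$ is fine in spirit (though note that $A|_F$ is only semi-ample and big, not ample, so you must first pass to the ample model of $A$ over $Z$, as the paper does via the contraction $X\to T/Z$). The genuine gap is the final step: you assert that [\ref{FM}] ``delivers'' a $p$ depending only on $d,q,u$ with $p\mathbf{M}$ b-Cartier once the fibres are bounded. No such effective statement is available here. The effective b-Cartier property of the moduli part is precisely the hard content of this lemma --- the introduction of the paper states explicitly that the existence of such a $p$ ``is currently out of reach even when $\dim Z=1$'' in general --- and [\ref{FM}] gives only the qualitative canonical bundle formula. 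The denominator bounds of Fujino--Mori (via middle Betti numbers of cyclic covers of the fibre) concern $K_X$ rather than $K_X+B$, and their pair versions (Ambro, Floris) require the generic fibre to be klt, whereas this lemma must handle lc fibres. Your own closing paragraph concedes that making the formula effective is ``the main obstacle''; that obstacle is the theorem, so the proof is missing its main step.

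The paper's argument is entirely different and does not pass through any effective adjunction statement. After shrinking $U$ and running MMPs, it replaces $(X,B)$ by a model on which $\rddown{B}$ contains the whole of $f^{-1}(Z\setminus U)$, forcing $\mu_z B_Z=1$ at every bad point $z$. It then uses divisorial adjunction to the vertical components $S$ of $\rddown{B}$, the finiteness of the coefficients of $B_S$ (global ACC), and [\ref{B-pol-var}, Theorem 1.7] to find a uniform $\lambda>0$ with $(S,B_S+\lambda A_S)$ lc; a lower bound away from zero for $\vol\bigl((K_X+B+\lambda A)|_{F_i}\bigr)$ then bounds the multiplicities $m_i$ of the fibre $\sum m_iF_i$ over $z$. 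With $q(K_X+B)$ integral, $K_Z+B_Z$ Cartier near $z$, and the $m_i$ bounded, the integrality of $\mu_z pM_Z$ is read off directly from the linear equivalence $(*)$. If you want to salvage your route, you would have to supply this kind of direct multiplicity bound yourself rather than cite the canonical bundle formula for it.
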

\begin{proof}
\emph{Step 1.}
Fix a closed point $z_0\in Z$. 
Shrinking $U$ we can assume that $z_0\notin U$, so 
it is enough to show that $\mu_zpM_Z$ is integral for any $z\notin U$, for some $p\in\N$ depending only on $d,q,u$. 
Furthermore, by the formula $(*)$, $q(K_F+B_F)\sim 0$ for the general 
log fibres $(F,B_F)$, so shrinking $U$ again we can assume that over $U$, $qB$ is an integral divisor 
without vertical components. 

Take a log resolution $W\to X$ of $(X,B+A)$ and 
let $B_W$ be the sum of the reduced exceptional divisor of $W\to X$ plus the birational transform of 
the horizontal$/Z$ part of $B$ plus the birational transform of the reduction of the 
fibres of $f$ over $Z\setminus U$. Then every component of the fibres of $W\to Z$ over the points 
in $Z\setminus U$ is a 
component of $\rddown{B_W}$. Let $A_W$ be the birational transform of the horizontal$/Z$ part of $A$. 
Then $(W,B_W+tA_W)$ is lc for any small $t>0$ because $\rddown{B_W}$ and $A_W$ have no common components 
as $\rddown{B}$ and $A$ have no common components over the generic point of $Z$.\\

\emph{Step 2.}
Run an MMP on $K_W+B_W+tA_W$ over $X$ with scaling of some ample 
divisor for some sufficiently small $t>0$. By construction, over $f^{-1}U$, $B_W+tA_W$ is the sum of the 
reduced exceptional divisor of $W\to X$ and the birational transform of $B+tA$. Thus  
since $(X,B+tA)$ is lc on $f^{-1}U$, 
the MMP terminates over $U$, so we reach a model $Y$ so that $(Y,B_Y+tA_Y)$ is a 
$\Q$-factorial dlt model  of $(X,B+tA)$ over $U$. We can assume that $A$ does not contain any non-klt 
centre of $(X,B+tA)$ over $U$ as $t$ is sufficiently small, 
so $(Y,B_Y)$ is a $\Q$-factorial dlt model of $(X,B)$ and 
$A_Y$ is the pullback of $A$ coinciding with birational transfrom of $A$, over $U$. 
In particular, over $U$, $K_Y+B_Y\sim_\Q 0$ and 
$K_Y+B_Y+tA_Y$ is semi-ample. 
 
Next run an MMP on $K_Y+B_Y+tA_Y$ over $Z$ with scaling of some ample 
divisor. The MMP does not modify $Y$ over $U$. 
Moreover, the MMP is also an MMP on $K_Y+B_Y+tA_Y-aF_Y$ where $F_Y$ is the sum of the fibres of $Y\to Z$ 
over the points in $Z\setminus U$, 
and $a>0$ is a small number. Thus the MMP terminates with a good minimal model $V$, by [\ref{HX-closure}], 
because $K_Y+B_Y+tA_Y-aF_Y$ is semi-ample over $U$ and every non-klt centre of $(Y,B_Y+tA_Y-aF_Y)$ 
intersects $f^{-1}U$.\\ 

\emph{Step 3.}
Decreasing $t$ if necessary and running MMP as in the previous paragraph, 
we can assume that running MMP on $K_V+B_V+sA_V$ over $Z$ with scaling of an ample divisor for any 
$0<s<t$ does not contract any divisor. Thus $K_V+B_V$ is a limit of movable$/Z$ $\R$-divisors. 
We claim that $K_V+B_V\sim_\Q 0/Z$.  
Since $K_V+B_V\sim_\Q 0$ over $U$, we have $K_V+B_V\sim_\Q P_V/Z$ for some vertical$/Z$ divisor $P_V$. 
Replacing $P_V$ with $P_V+\sum h_iF_i$ where $F_i$ are the fibres of $V\to Z$ with a common component with $P_V$ 
and $h_i$ are appropriate rational numbers, we can assume that $P_V\le 0$ and that 
$\Supp P_V$ does not contain the support of any fibre of $V\to Z$. 
Since $P_V$ is a limit of movable$/Z$ $\Q$-divisors, $P_V|_S$ is pseudo-effective 
for any component $S$ of any fibre of $V\to Z$. This is possible only if $P_V=0$ otherwise we can take 
$S$ to be a component of a fibre intersecting $P_V$ but not a component of $P_V$ and get a 
contradiction. Therefore, $K_V+B_V\sim_\Q 0/Z$ as claimed. In particular, $A_V$ is semi-ample and nef and big 
over $Z$.\\

\emph{Step 4.}
Let $\phi \colon N\to X$ and $\psi\colon N\to V$ be a common resolution.
By construction, 
$$
L:=\psi^*(K_V+B_V)-\phi^*(K_X+B)
$$
is zero over $U$. Then since $K_V+B_V\sim_\Q 0/Z$ and $K_X+B\sim_\Q 0/Z$, we see that 
$L$ is the pullback of a $\Q$-divisor $R_Z$ supported in $Z\setminus U$. 
Thus we have the adjunction formula 
$$
K_V+B_V=\psi_*(\phi^*(K_X+B)+L)\sim_q g^*(K_Z+{B}_Z+R_Z+M_Z)
$$ 
where $g$ denotes $V\to Z$ and ${B}_Z+R_Z,M_Z$ are the discriminant and moduli divisors 
of $(V,B_V)\to Z$, 
respectively, and $B_Z,M_Z$ are as in $(*)$. Therefore, replacing $(X,B),A$ with $(V,B_V),A_V$ 
we can assume that $(X,B+tA)$ is $\Q$-factorial dlt for some $t>0$, 
$A$ is semi-ample over $Z$, that $qB$ is integral, and 
that $\rddown{B}$ contains $f^{-1}(Z\setminus U)$.
In particular, this means $\mu_zB_Z=1$ for any $z\in Z\setminus U$.\\

\emph{Step 5.}
Let $X\to T/Z$ be the contraction defined by $A$ over $Z$. Let $S$ be a vertical$/Z$ component of $\rddown{B}$  
that is not contracted over $T$. Then $A_S:=A|_S$ is nef and big and $\vol(A_S)\le \vol(A|_F)\le u$ where 
$F$ is a general fibre of $f$.
By divisorial adjunction, 
$$
K_S+B_S:=(K_X+B)|_S\sim_\Q 0,
$$ 
so $(S,B_S)$ is a log Calabi-Yau pair and the coefficients of $B_S$ are in a DCC 
set depending only on $q$. In fact, by [\ref{HMX2}], they belong to a fixed finite set. 

We claim that the Cartier index of $A$ along any codimension two subvariety $I$ 
of $X$ contained in $S$ is bounded from above. Indeed, since $A$ does not contain any 
non-klt centre of $(X,B)$, $I$ is not a non-klt centre of $(X,B)$. So, $(X,B)$ 
is plt near the generic point of $I$. Then the Cartier index of $K_X+S$ along 
$I$ is bounded otherwise the coefficient of $I$ in $B_S$ would be $1$ [\ref{Shokurov-log-flips}, Proposition 3.9]
which would imply that $I$ is a non-klt centre of $(X,B)$, a contracdiction. 
But then the Cartier index along $I$ of any Weil divisor on $X$ would also be bounded by the same 
reference.   

Replacing $A$ with a bounded multiple, 
we can assume that $A_S$ is integral. Moreover, $(S,B_S+tA_S)$ is lc for some $t>0$. Therefore, 
by [\ref{B-pol-var}, Theorem 1.7], $(S,B_S+\lambda A_S)$ is lc 
for some rational number $\lambda>0$ depending only on $d,q,u$. In particular, 
$\vol(K_S+B_S+\lambda A_S)$ is bounded from below 
away from $0$ by [\ref{HMX2}, Theorem 1.3].\\

\emph{Step 6.}
Pick $z\in Z\setminus U$. 
Write the fibre of $f$ over $z$ as $\sum m_iF_i$. Then 
$$
\sum m_i\vol((K_X+B+\lambda A)|_{F_i})=\vol((K_X+B+\lambda A)|_F)\le \lambda^{d-1}u
$$
where $F$ is a general fibre of $f$. If $F_i$ is contracted over $T$, then $A|_{F_i}$ is not big, so
$$
\vol((K_X+B+\lambda A)|_{F_i})=0.
$$
But if $F_i$ is not contracted over $T$, then 
$\vol((K_X+B+\lambda A)|_{F_i})$ is bounded from below away from $0$, by the previous step, thus 
the corresponding $m_i$ is bounded from above. 
Therefore, replacing $X$ with $T$, we can assume that the fibre $\sum m_iF_i$ over $z$ 
has bounded coefficients (the dlt property of $(X,B)$ may be lost but we do not need it anymore).

Now from the formula $(*)$ and the facts that $q(K_X+B)$ is integral and 
$K_Z+B_Z$ is Cartier near $z$, we see that $qf^*M_Z$ is integral over $z$. That is, 
$q(\mu_zM_Z)(\sum m_iF_i)$ is integral, so $q(\mu_zM_Z)m_i$ is integral for each $i$. 
But then since the $m_i$ are bounded, we see that $p\mu_zM_Z$ is integral   
for some $p$ depending only on $q$ and the $m_i$, hence depending only on $d,q,u$.

\end{proof}

\begin{lem}\label{l-adjunction-bnd-fibs}
Let $d\in\N$, $\Phi\subset \Q^{\ge 0}$ be a DCC set, and $u\in \Q^{>0}$. 
Then there exist $p,q\in \N$ depending only on $d,\Phi,u$ satisfying the following. 
Assume that 
\begin{itemize}
\item $(X,B)$ is a projective lc pair of dimension $d$,
\item the coefficients of $B$ are in $\Phi$,
\item $f\colon X\to Z$ is a contraction with $K_X+B\sim_\Q 0/Z$,
\item $A\ge 0$ is an integral divisor  on $X$ such that over the generic point of $Z$: 
$(X,B+tA)$ is lc for some $t>0$ and $A$ is relatively ample,
\item $\vol(A|_F)=u$ for the general fibres $F$ of $f$.\
\end{itemize}
Then we can write an adjunction formula 
$$
K_X+B\sim_q f^*(K_Z+B_Z+M_Z)
$$
where $pM_{Z'}$ is Cartier on some high resolution $Z'\to Z$.
\end{lem}
\begin{proof}
Let $(F,B_F)$ be a general log fibre of $(X,B)$ over $Z$ and $A_F=A|_F$. 
Then $K_F+B_F\sim_\Q 0$, the coefficients of $B_F$ 
belong to $\Phi$, $A_F$ is ample with $\vol(A_F)=u$, and $(F,B_F+tA_F)$ is lc for some $t>0$. 
Then $(F,B_F),A_F$ is a stable log Calabi-Yau pair, that is, it is a polarised 
log Calabi-Yau pair in the language of [\ref{B-pol-var}]. Therefore, $(F,\Supp(B_F+A_F))$ belongs to 
a bounded family of couples, by [\ref{B-pol-var}, Corollary 1.8]. 

Thus by Lemma \ref{l-bnd-torsion-index}, there exists $q\in \N$ depending only on $d,\Phi,u$ such that 
$q(K_F+B_F)\sim 0$. Thus the same holds if $F$ is the generic fibre of $f$.
This implies that we can find a rational function $\alpha$ on $X$ so that 
$q(K_X+B)+\Div(\alpha)$ is vertical over $Z$. So since 
$$
q(K_X+B)+\Div(\alpha)\sim_\Q 0/Z,
$$ 
we deduce that $q(K_X+B)+\Div(\alpha)$ is the pullback of a $\Q$-Cartier $\Q$-divisor $qL_Z$ on $Z$, hence 
$
K_X+B\sim_q f^*L_Z.
$ 
Therefore, we get an adjunction formula 
$$
(*)~~~~ K_X+B\sim_q f^*(K_Z+B_Z+M_Z)
$$
where $B_Z$ is the discriminat divisor and $M_Z:=L_Z-(K_Z+B_Z)$ is the moduli divisor. 
The moduli divisor $M_{Z'}$ is nef and commutes with taking higher resolutions, for some 
resolution $Z'\to Z$. 

On the other hand, the moduli part $M_Z$ depends only on the generic log fibre of $(X,B)$ over $Z$ 
(in a birational sense)  
and the chosen rational function $\alpha$, assuming we have fixed $K_X,K_Z$ as Weil divisors 
(cf. [\ref{PSh-II}, \S 7][\ref{B-compl}, 3.4]). 

Let $\phi\colon X'\to X$ be a log resolution so that $X'\bir Z'$ is a morphism. Let $\Delta'$ be 
the horizontal$/Z$ part of the reduced exceptional divisor of $\phi$ plus 
the birational transform of the horizontal$/Z$ part of $B$. So every non-klt centre of 
$(X',\Delta')$ is horizontal over $Z'$. Run an MMP on $K_{X'}+\Delta'$ 
over $X$ with scaling of some ample divisor. It terminates over the generic point of $Z'$ 
[\ref{B-lc-flips}, Theorem 1.8] because 
over this generic point, $K_{X'}+\Delta'$ is the sum of $\phi^*(K_X+B)$ plus an effective exceptional divisor. 
So we get a model $X''$ on which $K_{X''}+\Delta''\sim_\Q 0$ over the generic point of $Z'$. 
 Then applying [\ref{B-lc-flips}, Theorem 1.5][\ref{HMX2}, Theorem 1.1] or applying 
 [\ref{HX-closure}] shows that we can run a further MMP on $K_{X''}+\Delta''$ over $Z'$ ending with a 
 good minimal model. Replacing $X''$ with the minimal model we can assume $K_{X''}+\Delta''$ 
 is semi-ample over $Z'$ defining a contraction $X''\to Z''/Z'$. The moduli divisor of $(X'',\Delta'')\to Z''$ 
 coincides with the moduli divisor of $(X,B)\to Z$ on $Z''$ because on a common resolution of 
 $X,X''$ the pullbacks of $K_X+B$ and $K_{X''}+\Delta''$ are equal over the generic point of $Z''$. 
Also the adjunction formula $(*)$ induces an adjunction formula
$$
(**)~~~~ K_{X''}+\Delta''\sim_q f''^*(K_{Z''}+B_{Z''}+M_{Z''})
$$
where $f''$ denotes $X''\to Z''$.

It is enough to find $p\in \N$ depending only on $\Phi,\mathcal{P}$ so that $pM_{Z''}$ is an integral divisor
because then $pM_{Z'}$ would be integral, hence Cartier. 
Cutting $Z''$ by hyperplane sections, we can find a curve $C\subset Z''$ and an lc pair $(S,\Gamma)$ 
over $C$ with $S\subset X''$ such that $(S,\Gamma)$ is lc, $K_S+\Gamma\sim_\Q 0/C$, 
and the coefficients of 
$\Gamma$ are in $\Phi$. Moreover, if $A''$ is the birational transform of the horizontal$/Z$ part of  
$A$ and $H=A''|_S$, then over the generic point of $C$: $(S,\Gamma+tH)$ is lc for some $t>0$ and 
$H$ is big and semi-ample with $\vol(H|_G)=u$ for the general fibres $G$ of $g\colon S\to C$. 
In addition, the adjunction formula $(**)$ induces an adjunction formula 
$$
(***)~~~~ K_S+\Gamma\sim_q g^*(K_C+B_C+M_C)
$$
where $M_C={M_{Z''}}|_C$ (cf. proof of [\ref{B-sing-fano-fib}, Lemma 3.2]). 

Now by Lemma \ref{l-cartier-index-moduli-div-over-curves}, 
$pM_C$ is integral for some $p\in \N$ depending only on $d,\Phi,u$. Then $pM_{Z''}$ is also integral as required 
because $C$ is a general curve on $Z''$.
 
\end{proof}

%%%%%%%%%%%%
\subsection{DCC of Iitaka volumes}
\begin{proof}(of Theorem \ref{t-dcc-iitaka-volumes})
First we treat the lc case. Pick  
$$
(X,B)\in \mathcal{I}_{lc}(d,\Phi,u)
$$
and let $f\colon X\to Z$ be the given contraction, and let $(F,B_F)$ be a log 
general fibre of $(X,B)$ over $Z$. By assumption, there is an integral divisor 
$A\ge 0$ on $X$ such that $\vol(A|_F)=u$ and over some non-empty 
open subset of $Z$: $(X,B+tA)$ is lc for some $t>0$ and $A$ is ample.

By Lemma \ref{l-adjunction-bnd-fibs}, there exist $p,q\in \N$ depending only on 
$d,\Phi,u$ such that we can write an adjunction formula 
$$
K_X+B\sim_q f^*(K_Z+B_Z+M_Z)
$$
where $pM_{Z'}$ is Cartier on some high resolution $Z'\to Z$. Moreover, 
$(Z,B_Z+M_Z)$ is generalised lc and 
$$
\Ivol(K_X+B)=\vol(K_Z+B_Z+M_Z).
$$
In addition, by the ACC for lc thresholds [\ref{HMX2}], the coefficients of $B_Z$ 
belong to a DCC set $\Psi$ depending only on $d,\Phi$. We can assume $\frac{1}{p}\in \Psi$. 
Thus 
$$
(Z,B_Z+M_Z)\in \mathcal{G}_{glc}(\dim Z,\Psi),
$$
hence $\vol(K_Z+B_Z+M_Z)$ belongs to a DCC set depending only on $\dim Z,\Psi$, 
by Theorem \ref{t-dcc-vol-gen-pairs}. Therefore, $\Ivol(K_X+B)$ 
belongs to a DCC set depending only on $d,\Phi,u$.

Now we treat the klt case. Pick  
$$
(X,B)\in \mathcal{I}_{klt}(d,\Phi,<\!\!u),
$$
let $f\colon X\to Z$ be the given contraction, and let $(F,B_F)$ be a log 
general fibre of $(X,B)$ over $Z$. Then $(F,B_F)$ is a klt log Calabi-Yau pair.
By assumption, there is an integral divisor 
$A\ge 0$ on $X$ such that $A_F=A|_F$ is ample with $\vol(A|_F)<u$.

Now $(F,B_F+\lambda A_F)$ is klt for some fixed $\lambda>0$, by [\ref{B-pol-var}, Theorem 1.7]. 
Moreover, $(F,\Supp(B_F+\lambda A_F))$ belongs to a bounded family of couples, by 
[\ref{B-pol-var}, Corollary 1.6]. Therefore, there is a very ample divisor $H_F$ on $F$ 
with bounded $H_F^{\dim F}$ and $A_F\cdot H_F^{\dim F-1}$. 
This implies that the Cartier index of $A_F$ is bounded, by [\ref{B-compl}, Lemma 2.25].
Thus $\vol(A_F)$ takes finitely many possible values $u_1,\dots,u_r$. Therefore, 
$$
(X,B)\in \bigcup_j \mathcal{I}_{lc}(d,\Phi,u_j),
$$
hence the DCC follows from the DCC in the lc case.

\end{proof}

%%%%%%%%%%%%%%%%%%%%%%%%%%%%%%%%
%%%%%%%%%%%%%%%%%%%%%%%%%%%%%%%%

\section{\bf Boundedness of stable log minimal models}

In this section we prove our main results on boundedness of stable minimal models, 
that is, Theorems \ref{t-bnd-stable-mmodels-klt} and \ref{t-bnd-stable-mmodels-lc}.

\subsection{Log discrepancies on stable log minimal models}
We prove a statement similar to Theorem \ref{t-disc-for-F-glc} but for stable log minimal models.

\begin{lem}\label{l-disc-stable-mmodels}
Let $d\in \N$, $\Phi\subset \Q^{\ge 0}$ be a DCC set, and $u,v\in \Q^{>0}$. 
Then the set 
$$
\{a(D,X,B)\le 1 \mid (X,B),A\in \mathcal{S}_{lc}(d,\Phi,u,v), \ \mbox{$D$ prime divisor over $X$}\}
$$
is finite. 
\end{lem}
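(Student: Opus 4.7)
The plan is to exhibit a uniform $q\in \N$ depending only on $d,\Phi,u,v$ such that $q(K_X+B)$ is Cartier for every $(X,B),A\in \mathcal{S}_{lc}(d,\Phi,u,v)$. Once this is in hand, the log discrepancies of $(X,B)$ must lie in $\tfrac{1}{q}\Z$, so the subset $\le 1$ sits inside the finite set $\{0,\tfrac{1}{q},\ldots,1\}$, which is the conclusion.

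First I would pass to the Iitaka fibration $f\colon X\to Z$ determined by the semi-ample divisor $K_X+B$. Since $K_X+B+A$ is ample, the restriction $A_F=A|_F$ to a general fibre $F$ of $f$ is ample, $\vol(A_F)=u$, and $(F,B_F+tA_F)$ is lc for some $t>0$, so the polarised log Calabi-Yau pair $(F,B_F),A_F$ belongs to a bounded family by \mbox{[\ref{B-pol-var}, Corollary 1.8]}. In particular, after replacing $A$ by a uniformly bounded integer multiple if necessary, so as to absorb the denominators of the coefficients of $A$ (which land in a fixed finite subset of $\Phi$ thanks to the bounded family), the hypotheses of Lemma \ref{l-adjunction-bnd-fibs} are satisfied and that lemma furnishes $p,q\in \N$ depending only on $d,\Phi,u$ with an adjunction
$$
K_X+B\sim_q f^*(K_Z+B_Z+M_Z),
$$
where $pM_{Z'}$ is Cartier on a high resolution $Z'\to Z$. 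The $\sim_q$ relation gives in particular that $q(K_Z+B_Z+M_Z)$ is Cartier on $Z$.

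Next I would observe that $q(K_X+B)$ is then automatically Cartier on $X$. Indeed, from the $\sim_q$ relation there is a rational function $\alpha\in k(X)^\times$ with $q(K_X+B)=qf^*(K_Z+B_Z+M_Z)+\Div(\alpha)$; the first summand is the pullback of a Cartier divisor hence Cartier, and the second is a principal divisor hence Cartier, so their sum is Cartier. Taking any log resolution $\phi\colon X'\to X$ with crepant decomposition $K_{X'}+B_{X'}=\phi^*(K_X+B)$, the divisor $q(K_{X'}+B_{X'})=\phi^*q(K_X+B)$ is Cartier on $X'$ and in particular has integer coefficients at every prime divisor of $X'$. Choosing $K_{X'}$ as an integer Weil representative, $qB_{X'}$ is integer, so $\mu_DB_{X'}\in \tfrac{1}{q}\Z$ for every prime divisor $D$ over $X$, and therefore $a(D,X,B)=1-\mu_DB_{X'}\in \tfrac{1}{q}\Z$.

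Finally, the lc condition forces $a(D,X,B)\ge 0$, so the constraint $a(D,X,B)\le 1$ confines $a(D,X,B)$ to the finite set $\{0,\tfrac{1}{q},\tfrac{2}{q},\ldots,1\}$, depending only on $d,\Phi,u,v$. The main obstacle is the first step of fitting $\mathcal{S}_{lc}(d,\Phi,u,v)$ into the framework of Lemma \ref{l-adjunction-bnd-fibs} with uniform constants — specifically, reconciling the $\R$-divisor nature of $A$ in $\mathcal{S}_{lc}$ with the integral-divisor hypothesis of $\mathcal{I}_{lc}$, and verifying that the coefficients of $A$ (and of $B$) occurring in our family lie in a fixed finite subset of $\Phi$ so that a single $q$ works uniformly; this is precisely where boundedness of polarised log Calabi-Yau fibres from \mbox{[\ref{B-pol-var}]} and global ACC for log Calabi-Yau pairs \mbox{[\ref{HMX2}]} are used.
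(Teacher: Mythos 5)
There is a genuine gap at the heart of your argument: the assertion that the relation $K_X+B\sim_q f^*(K_Z+B_Z+M_Z)$ ``gives in particular that $q(K_Z+B_Z+M_Z)$ is Cartier on $Z$.'' It does not. The relation $\sim_q$ only says that $q(K_X+B)-qf^*(K_Z+B_Z+M_Z)$ is principal; it controls neither the Cartier index of $K_Z+B_Z+M_Z$ nor even the denominators of its coefficients. The discriminant $B_Z$ has coefficients of the form $1-t_P$ where $t_P$ is an lc threshold over the generic point of a prime divisor $P\subset Z$, and a priori these only lie in a DCC set (by ACC for lc thresholds); $M_Z=L_Z-K_Z-B_Z$ inherits these uncontrolled denominators, and Lemma \ref{l-adjunction-bnd-fibs} only gives that $pM_{Z'}$ is Cartier on a \emph{resolution} $Z'$, not that $q(K_Z+B_Z+M_Z)$ is Cartier on the (possibly quite singular) $Z$. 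Once this premise fails, the conclusion that $q(K_X+B)$ is Cartier, and hence the whole finiteness claim, collapses.

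Bounding the denominators of $B_{Z'}$ at the relevant divisors is precisely the hard point, and it is where the paper invokes Theorem \ref{t-disc-for-F-glc}: since $(Z,B_Z+M_Z)\in\mathcal{F}_{glc}(e,\Psi,v)$ with $K_Z+B_Z+M_Z$ ample of volume $v=\Ivol(K_X+B)$, the generalised log discrepancies $a(E,Z,B_Z+M_Z)\le 1$ lie in a finite set $\Pi$; this is then transferred to $a(D,X,B)$ for $D$ vertical over $Z$ by showing $p(K_{Z'}+B_{Z'}+M_{Z'})$ is Cartier \emph{near the generic point of} $E=$ image of $D$ (not globally), while horizontal $D$ are handled separately via global ACC on the general fibre. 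Note that your argument never actually uses the hypothesis $\Ivol(K_X+B)=v$, which is a telltale sign: without fixing $v$ the set of log discrepancies $\le 1$ would only satisfy ACC, not finiteness, so no proof that ignores $v$ can be correct. To repair the proof you would need to insert the step passing through $\mathcal{F}_{glc}(e,\Psi,v)$ and Theorem \ref{t-disc-for-F-glc} on the base, together with the separate treatment of horizontal divisors.
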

\begin{proof}
Let $f\colon X\to Z$ be the given contraction defined by the semi-ample divisor $K_X+B$. 
If $f$ is birational, then $K_Z+B_Z=f_*(K_X+B)$ is ample with volume $v$ and $(Z,B_Z)$ is lc. 
So $(Z,B_Z)$, considered as a generalised pair with trivial nef part, belongs to $\mathcal{F}_{glc}(d,\Phi,v)$. 
Thus the lemma follows from Theorem \ref{t-disc-for-F-glc} in this case because $(X,B)$ and 
$(Z,B_Z)$ have the same log discrepancies. 

From now on we assume $f$ is not birational.
Let $(F,B_F)$ be a general log fibre of $f$ and $A_F=A|_F$. By assumption, $A_F$ is 
ample with $\vol(A_F)=u$ and $(F,B_F+tA_F)$ is lc for some $t>0$. 
Then there is a rational number $\lambda>0$ depending only on $d,\Phi,u$ such that 
$(F,B_F+\lambda A_F)$ is lc, by [\ref{B-pol-var}, Theorem 6.4].
Since 
$$
\vol(K_F+B_F+\lambda A_F)=\vol(\lambda A_F)=\lambda^{\dim F}u,
$$
we see that the pair $(F,B_F+\lambda A_F)$, considered as a generalised pair with trivial nef part, belongs to  
$$
\mathcal{F}_{glc}(\dim F,\Phi+\lambda\Phi,\lambda^{\dim F}u),
$$
so the coefficients of $B_F+\lambda A_F$ belong to a fixed finite set depending only on $d,\Phi,u$, by 
Theorem \ref{t-disc-for-F-glc}. This in turn implies that the coefficients of $A_F$ belong to a fixed finite set.
Thus there is a bounded $l\in\N$ such that 
the horizontal$/Z$ part $lA^h$ is integral. Therefore, 
$$
(X,B),lA^h\in \mathcal{I}_{lc}(d,\Phi,l^{\dim F}u).
$$

By Lemma \ref{l-adjunction-bnd-fibs}, there exist $p,q\in \N$ depending only on $d,\Phi,u,l$, 
hence on $d,\Phi,u$ such that we can write an adjunction formula 
$$
K_X+B\sim_q f^*(K_Z+B_Z+M_Z)
$$
where $pM_{Z'}$ is Cartier on some high resolution $Z'\to Z$. Moreover, 
$(Z,B_Z+M_Z)$ is generalised lc and 
$$
\Ivol(K_X+B)=\vol(K_Z+B_Z+M_Z)=v.
$$
In addition, by the ACC for lc thresholds [\ref{HMX2}], the coefficients of $B_Z$ 
belong to a DCC set $\Psi$ depending only on $d,\Phi$. Adding $\frac{1}{p}$ we can assume 
$\frac{1}{p}\in \Psi$, hence  
$$
(Z,B_Z+M_Z)\in \mathcal{F}_{glc}(\dim Z,\Psi,v)
$$
where now $\Psi$ depends only on $d,\Phi,u$.

By Theorem \ref{t-disc-for-F-glc}, all the generalised log discrepancies 
of $(Z,B_Z+M_Z)$ in the interval $[0,1]$ belong to a fixed finite set $\Pi$ depending only on $\dim Z,\Psi,v$,  
hence depending only on $d,\Phi,u,v$. 

Assume $D$ is a prime divisor over $X$ with $a(D,X,B)\le 1$. If $D$ is horizontal over 
$Z$, then it determines a prime divisor $S$ over a general fibre $F$ of $f$ so that 
$a(S,F,B_F)\le 1$. Then $a(S,F,B_F)$ belongs to a finite set because by the adjunction formula above, 
we have $q(K_F+B_F)\sim 0$ which in particular means that the Cartier index of $K_F+B_F$ is bounded. 

We can then assume that $D$ is vertical over $Z$.
There exist high log resolutions $X'\bir X$ and $Z'\to Z$ such that $f'\colon X'\bir Z'$ is a morphism, 
$D$ is a divisor on $X'$, and its image on $Z'$ is a prime divisor, say $E$. 
Letting $K_{X'}+B'$ 
be the pullback of $K_X+B$ we have an adjunction formula 
$$
K_{X'}+B'\sim_q f'^*(K_{Z'}+B_{Z'}+M_{Z'}).
$$

By definition of adjunction, $\mu_EB_{Z'}=1-t$ where $t$ is the lc threshold of $f'^*E$ with 
respect to $(X',B')$ over the generic point of $E$. Since 
$$
\mu_EB_{Z'}=1-a(E,Z,B_Z+M_Z), 
$$
we see that 
$$
t=a(E,Z,B_Z+M_Z).
$$ 
On the other hand, since $f'^*E$ has integral coefficients, $t\le 1-\mu_DB'$, so we have  
$$
a(E,Z,B_Z+M_Z)=t\le 1-\mu_DB'=a(D,X,B)\le 1.
$$
Therefore, 
$$
a(E,Z,B_Z+M_Z)\in \Pi
$$ 
which means $\mu_EB_{Z'}$ belongs to a fixed finite set of rational numbers. 
In particular, perhaps after replacing $p$ with a bounded multiple, 
we can assume that $p(K_{Z'}+B_{Z'}+M_{Z'})$ is Cartier near the 
generic point of $E$. But then from 
$$
pq(K_{X'}+B')\sim pqf'^*(K_{Z'}+B_{Z'}+M_{Z'})
$$  
we see that $pqB'$ is Cartier over the generic point of $E$, so it is Cartier near the 
generic point of $D$. Therefore, 
$$
1-a(D,X,B)=\mu_DB'\in\frac{1}{pq}\Z
$$ 
which implies that 
$$
a(D,X,B)\in \frac{1}{pq}\Z\cap [0,1].
$$ 

\end{proof}

Note that the proof uses $A$ only in the relative sense over $Z$. In particular, the condition 
of $K_X+B+A$ being ample is not used.

\begin{lem}\label{l-lct-stable-lc-mmodels}
Let $d\in \N$, $\Phi\subset \Q^{\ge 0}$ be a DCC set, and $u,v,w\in \Q^{>0}$. 
Then there is $\lambda\in \Q^{>0}$ depending only on  $d,\Phi,u,v,w$ such that for any 
$$
(X,B),A\in \mathcal{S}_{lc}(d,\Phi,u,v)
$$
with $\vol(K_X+B+A)<w$, the pair $(X,B+\lambda A)$ is lc. 
\end{lem}
\begin{proof}
Pick $(X,B),A$ as in the lemma
and let $f\colon X\to Z$ be the given contraction defined by the semi-ample divisor $K_X+B$.
By Lemma \ref{l-disc-stable-mmodels}, 
the log discrepancies of $(X,B)$ in the interval $[0,1]$ belong to a fixed finite set 
depending only on $d,\Phi,u,v$. In particular, the coefficients of $B$ belong to a fixed finite 
set $\Phi_0\subset \Phi$. Moreover, fixing a sufficiently small $\epsilon>0$, if $D$ is a 
prime divisor over $X$ with $a(D,X,B)<\epsilon$, then $a(D,X,B)=0$.

Let $(Y,B_Y)$ be a $\Q$-factorial dlt model of $(X,B)$ and let $A_Y$ be the pullback of $A$. 
Since $(X,B+tA)$ is lc for some $t>0$, $A$ does not contain any non-klt centre of $(X,B)$, 
hence $A_Y$ is just the birational transform of $A$. In particular, the coefficients 
of $A_Y$ are in $\Phi$. Since $(Y,0)$ is klt, there is a birational contraction $Y'\to Y$ 
which extracts exactly the prime divisors $D$ over $Y$ with $a(D,Y,0)<\epsilon$.  
For such $D$ we have 
$$
a(D,X,B)=a(D,Y,B_Y)\le a(D,Y,0)< \epsilon,
$$ 
hence by the previous paragraph, 
$a(D,Y,B_Y)=0$. Therefore, if $K_{Y'}+B_{Y'}$ is the pullback of $K_Y+B_Y$, 
then each exceptional divisor of $Y'\to Y$ is a component of $\rddown{B_{Y'}}$.  
Since $(Y,B_Y+tA_Y)$ is lc for some $t>0$, we deduce that no exceptional divisor of $Y'\to Y$ 
maps into $A_Y$, hence if $A_{Y'}$ is the pullback of $A_Y$, then $A_{Y'}$ is the 
birational transform of $A_Y$. 

Now replace $(Y,B_Y),A_Y$ with $(Y',B_{Y'}),A_{Y'}$ (the dlt property maybe lost but we will not need it). 
Then we have a crepant model 
$(Y,B_Y)$ of $(X,B)$ such that $(Y,0)$ is $\epsilon$-lc, the exceptional divisors 
of $Y\to X$ are components of $\rddown{B_Y}$, and $A_Y$ is the pullback of $A$ which coincides with 
the birational transform of $A$. 

By construction, 
$K_Y+B_Y+A_Y$ is nef and big, $K_Y+B_Y$ is nef, the coefficients of $B_Y$ are in a 
finite set of rational numbers, and the coefficients of $A_Y$ are in $\Phi$. 
Thus applying [\ref{B-pol-var}, Theorem 4.2] to an appropriate 
bounded multiple of $K_Y+B_Y+A_Y$, we see 
that $|m(K_Y+B_Y+A_Y)|$ defines a birational map for some $m\in\N$ depending only on 
$d,\Phi,\epsilon$, hence depending only on 
$d,\Phi,u,v$. In particular, $|m(K_X+B+A)|$ also defines a birational map.

Pick a member 
$$
M\in |m(K_X+B+A)|.
$$
Then 
$$
\vol(M)=\vol(m(K_X+B+A))<m^dw.
$$
We can assume the non-zero coefficients of $mA$ are at least $1$ and that $mB$ is integral. 
So for any component $D$ of $M$ we have $\mu_D(M+B+mA)\ge 1$ because any component of $M$ whose 
coefficient is not an integer is a component of $A$. Moreover, by construction, 
$$
M-(K_X+B+mA)\sim (m-1)(K_X+B)
$$ 
is pseudo-effective. Therefore, 
applying [\ref{B-compl}, Proposition 4.4] to $(X,B+mA),M$ 
we deduce that $(X,B+mA+M)$ is log birationally bounded, that is, 
 there is a bounded set of couples $\mathcal{P}$ and a number $c\in\Q^{>0}$ depending only on  
 $d,\Phi,m^dw$, hence only on 
$d,\Phi,u,v,w$, such that there is a log smooth couple $(\overline{X},\overline{\Sigma})\in \mathcal{P}$ 
and a birational map $\overline{X}\bir X$ such that 
\begin{itemize}
\item $\overline{\Sigma}$ contains the reduced exceptional divisor of $\overline{X}\bir X$ and the support of the birational 
transform of $B+A+M$, and 
\item if $\overline{M}$ is the pullback of $M$ to $\overline{X}$, then $\overline{M}\cdot \overline{H}^{d-1}\le c$ for some 
very ample divisor $\overline{H}\le \overline{\Sigma}$. 
\end{itemize}
Here by pullback of $\overline{M}$ to $\overline{X}$ (and similarly for other divisors) 
we mean pulling back $\overline{M}$ 
to a common resolution of $X,\overline{X}$ and then pushing it down to $\overline{X}$.
Since $M-A$ is pseudo-effective, if $\overline{A}$ is the pullback of $A$ to $\overline{X}$, 
then $\overline{A}\cdot \overline{H}^{d-1}\le c$. In particular, the coefficients of $\overline{A}$ 
are bounded from above. 

Let $K_{\overline{X}}+\overline{B}$ be the pullback of 
$K_X+B$ to $\overline{X}$. Since no log discrepancy of $(X,B)$ belongs to $(0,\epsilon)$, 
no coefficient of $\overline{B}$ is in $(b,1)$ where
$b=1-\epsilon$. On the other hand, since $A$ does not contain any non-klt centre of $(X,B)$, 
we see that 
$\overline{A}$ does not contain any component of $\overline{B}$ with coefficient $1$.
Therefore, there is $\lambda\in(0,1)$ depending only on $d,\Phi,u,v,w$, such that 
no coefficient of $\overline{B}+\lambda\overline{A}$ exceeds $1$, that is, 
$(\overline{X,}\overline{B}+\lambda\overline{A})$ is sub-lc as it is log smooth. But then since 
$$
K_X+B+\lambda A=(1-\lambda)(K_X+B)+\lambda (K_X+B+A)
$$ 
is ample, we deduce that $(X,B+\lambda A)$ is lc because the pullback of $K_X+B+\lambda A$ 
to a common resolution of $X,\overline{X}$ is less than or equal to the pullback of 
$K_{\overline{X}}+\overline{B}+\lambda \overline{A}$.

\end{proof}

%%%%%%%%%%%%%%%

\subsection{Boundedness of $\mathcal{S}_{lc}(d,\Phi,u,v,\sigma)$ and $\mathcal{S}_{klt}(d,\Phi,u,v,<\!\!w)$}

\begin{proof}(of Theorem \ref{t-bnd-stable-mmodels-lc})
Pick 
$$
(X,B),A\in \mathcal{S}_{lc}(d,\Phi,u,v,\sigma)
$$
and let $f\colon X\to Z$ be the given contraction defined by the semi-ample divisor $K_X+B$.
By assumption, $K_X+B+A$ is ample with volume $\sigma(1)$. By Lemma \ref{l-lct-stable-lc-mmodels}, 
there is a rational number $\lambda>0$ depending only on $d,\Phi,u,v,\sigma(1)$, such that 
$(X,B+\lambda A)$ is lc. 

The coefficients of $B+\lambda A$ belong to the DCC set $\Phi+\lambda \Phi$ and the volume 
$$
\vol(K_X+B+\lambda A)=\sigma(\lambda)
$$ 
is fixed. Therefore, by [\ref{HMX3}], $(X,B+\lambda A)$ belongs to a 
bounded family of pairs. So there is a very ample divisor $L$ on $X$ with $L^d$ and 
$$
(K_X+B+\lambda A)\cdot L^{d-1}
$$
bounded from above. Then 
$$
(K_X+B+A)\cdot L^{d-1}
$$
is bounded from above, so the $(X,B),A$ form a bounded family.

\end{proof}

\begin{proof}(of Theorem \ref{t-bnd-stable-mmodels-klt})
Pick 
$$
(X,B),A\in \mathcal{S}_{klt}(d,\Phi,u,v,<\!\!w),
$$
let $f\colon X\to Z$ be the given contraction defined by the semi-ample divisor $K_X+B$. 
By Lemma \ref{l-disc-stable-mmodels}, 
the log discrepancies of $(X,B)$ in the interval $[0,1]$ belong to a fixed finite set 
depending only on $d,\Phi,u,v$. Therefore, $(X,B)$ is $\epsilon$-lc and 
$l(K_X+B)$ is integral for some $\epsilon>0$ and $l\in\N$ 
depending only on $d,\Phi,u,v$. 

By assumption, $K_X+B+A$ is ample with volume $<w$. 
By Lemma \ref{l-lct-stable-lc-mmodels}, 
there is a rational number $\lambda\in (0,1)$ depending only on $d,\Phi,u,v,w$ such that 
$(X,B+\lambda A)$ is klt. Replacing $\lambda$ with $\frac{\lambda}{2}$, we can assume 
$(X,B+\lambda A)$ is $\frac{\epsilon}{2}$-lc.
Let 
$$
N:=l(K_X+B+\lambda A).
$$ 
Then 
$$
\vol(K_X+B+\lambda A+N)\le \vol((l+1)(K_X+B+A))<(l+1)^dw.
$$
Therefore, applying [\ref{B-pol-var}, Theorem 6.2] to $(X,B+\lambda A), N$ shows that 
$(X,B+\lambda A)$ belongs to a bounded family. This implies $(X,B),A$ belongs to a bounded family.

\end{proof}

%%%%%%%%%%%%%%%%%%%%%%%%%%%%%%%%
%%%%%%%%%%%%%%%%%%%%%%%%%%%%%%%%

%\section{\bf Low dimensions}

%In this subsection we present an alternative proof of descent of nef divisors in dimensions 2 and 3. 

%%%%%%%%%%%%%%%%%%%%%%%%%%%%%%%
%%%%%%%%%%%%%%%%%%%%%%%%%%%%%%%%%%%%%

\vspace{2cm}
%%%%%%%%%%%%%%%%%%%%%

\textsc{DPMMS, Centre for Mathematical Sciences} \endgraf
\textsc{University of Cambridge,} \endgraf
\textsc{Wilberforce Road, Cambridge CB3 0WB, UK} \endgraf
\email{c.birkar@dpmms.cam.ac.uk\\}


\begin{thebibliography}{99}

\bibitem{}\label{Alexeev-K^2}  
{V. Alexeev; {\emph{Boundedness and $K\sp 2$ for log surfaces.}}  Internat. J. Math.  \textbf{5}  (1994),  no. 6, 779--810.}

\bibitem{}\label{ambro-adj} 
 F. Ambro; \textit{The Adjunction Conjecture and its applications.}
arXiv:math/9903060v3.

\bibitem{}\label{B-non-klt}  
C. Birkar, {\emph{On connectedness of non-klt loci of singularities of pairs.}}
 arXiv:2010.08226v1.

\bibitem{}\label{B-gen-pairs}  
C. Birkar, {\emph{Generalised pairs in birational geometry.}}
 arXiv:2008.01008v2.
 
\bibitem{}\label{B-pol-var}  
C. Birkar, {\emph{Geometry and moduli of polarised varieties.}}.
 arXiv:2006.11238v1 (2020). 
 
\bibitem{}\label{B-lcyf}  
C. Birkar, {\emph{Log Calabi-Yau fibrations.}}
 arXiv:1811.10709v2.

\bibitem{}\label{B-BAB}  
C. Birkar, {\emph{Singularities of linear systems and boundedness of Fano varieties.}}
Ann. of Math, \textbf{193}, No. 2 (2021), 347--405.

\bibitem{}\label{B-compl}
C.~Birkar; \emph{Anti-pluricanonical systems on Fano varieties}, 
 Ann. of Math. \textbf{190}, No. 2 (2019), 345--463.

\bibitem{}\label{B-sing-fano-fib}  
C. Birkar; \emph{Singularities on the base of a Fano type fibration.}  
{J. Reine Angew Math.}, \textbf{715} (2016), 125--142.

\bibitem{}\label{B-lc-flips}
C.~Birkar,
\emph{Existence of log canonical flips and a special LMMP},
Pub. Math. IHES., \textbf{115} (2012), 325--368.

\bibitem{}\label{BCHM}
C.~Birkar, P.~Cascini, C.~Hacon and J.~M$^{\rm c}$Kernan;
\emph{Existence of minimal models for varieties of log general type},
J. \ Amer. \ Math. \ Soc. \textbf{23} (2010), no. 2, 405--468.

\bibitem{}\label{BZh}
C.~Birkar and D-Q. Zhang; 
\emph{Effectivity of Iitaka fibrations and pluricanonical systems of polarized pairs},
 Pub. Math. IHES. \textbf{123} (2016), 283--331.
 
\bibitem{}\label{Eisenbud} 
D. Eisenbud. \emph{Commutative algebra with a view toward algebraic geometry}. 
Springer (2004).
 
 
\bibitem{}\label{Filipazzi} 
S. Filipazzi. \emph{Boundedness of log canonical surface generalized polarized pairs}. 
Taiwanese J. Math. 22.4 (2018), pp. 813-850.

\bibitem{}\label{FG-lc-trivial} 
O. Fujino, Y. Gongyo; \textit{On the moduli b-divisors of lc-trivial fibrations.} 
Ann. Inst. Fourier, Grenoble \textbf{64}, 4 (2014) 1721--1735.

\bibitem{}\label{FM}
O.~Fujino and S.~Mori,
\emph{A canonical bundle formula},
J. \ Differential Geom. \ \textbf{56} (2000), no. 1, 167--188.


\bibitem{}\label{Gongyo} 
Y. Gongyo; \textit{On the minimal model theory for dlt pairs of numerical log 
Kodaira dimension zero,} M ath. Res. Lett. \textbf{18} (2011),no.05, 991--1000. 


\bibitem{}\label{HMX3}
C.~D.~Hacon, J.~M$^{\rm c}$Kernan and C.~Xu,
\emph{Boundedness of moduli of varieties of general type},
J. Eur. Math. Soc. \textbf{20} (2018), 865--901.

\bibitem{}\label{HMX2}
C.~D.~Hacon, J.~M$^{\rm c}$Kernan and C.~Xu,
\emph{ACC for log canonical thresholds}, Ann. of Math. (2) \textbf{180} (2014), no. 2, 523--571.

\bibitem{}\label{HMX1}
C.~D.~Hacon, J.~M$^{\rm c}$Kernan and C.~Xu,
\emph{On the birational automorphisms of varieties of general type},
Ann. \ of Math. \ (2) \textbf{177} (2013), no. 3, 1077--1111.


\bibitem{}\label{HX-closure}
C.~D.~Hacon and C.~Xu; \emph{Existence of log canonical closures.}
Invent. Math. \textbf{192} (2013), no. 1, 161--195.

\bibitem{}\label{Jiao}
J. Jiao; \emph{On the boundedness of canonical models.}
 arXiv:2103.13609v1
 
\bibitem{}\label{kaw-subadjuntion} 
Y. Kawamata; \textit{Subadjunction of log canonical divisors, II}, 
Amer. J. Math. \textbf{120} (1998), 893--899.

\bibitem{}\label{kawamata-bnd-ext-ray}
Y.~Kawamata,
\emph{On the length of an extremal rational curve},
Invent. \ Math. \ \textbf{105} (1991), no. 3, 609--611.


\bibitem{}\label{KKMS}
G. Kempf, F. Knudsen, D. Mumford and B. Saint-Donat, \emph{Toroidal Embeddings I}, Springer, LNM 339, (1973). 


\bibitem{}\label{kollar-moduli-gen-type} 
J. Koll\'ar; \emph{Moduli of varieties of general type.}
Book in preparation, https://web.math.princeton.edu/~kollar/book/modbook20170720-hyper.pdf. 
Version July 2017.


\bibitem{}\label{kollar-mori}
J.~Koll\'ar and S.~Mori,
Birational geometry of algebraic varieties,
Cambridge Tracts in Math. \textbf{134},
Cambridge Univ.\ Press, 1998.

\bibitem{}\label{kollar-ebpf} 
J. Koll\'ar; \emph{Effective base point freeness.}
Math. Annalen (1993), Volume \textbf{296}, Issue 1, pp 595--605.

\bibitem{}\label{Lazarsfeld}
R. Lazarsfeld; \emph{Positivity in algebraic geometry I.} 
Springer (2004).

\bibitem{}\label{Laz-Mustata}
R. Lazarsfeld, M. Musta\c{t}\u{a}: Convex bodies associated to linear series. Ann. Sci. École Norm.
Sup. (4) 42, 783–835 (2009).

\bibitem{}\label{Z-Li}
Z. Li; \emph{Boundedness of the base varieties of certain fibrations.} 
arXiv:2002.06565v2 (2020).

\bibitem{}\label{PSh-II} {Yu. Prokhorov, V.V. Shokurov; {\emph{Towards the second main theorem on complements.}} 
J.  Algebraic Geometry, \textbf{18} (2009) 151--199.}

\bibitem{}\label{Shokurov-log-flips}  {V.V. Shokurov, {\emph{$3$-fold log flips}},
With an appendix by Yujiro Kawamata.
Russian  Acad. \ Sci. \ Izv. \ Math.  \textbf{40}  (1993),  no. 1, 95--202.}


\bibitem{}\label{Viehweg}
E. Viehweg, \emph{Quasi-projective moduli for polarized manifolds,} 
Ergebnisse der Mathematik und ihrer Grenzgebiete (3), vol. 30, Springer-Verlag, Berlin, 1995.

\end{thebibliography}
\end{document}